\newtheorem{theorem}{Theorem}[section]
\newtheorem{proposition}{Proposition}[section]
\newtheorem{lemma}{Lemma}[section]
\newtheorem{corollary}{Corollary}[section]
\newtheorem{definition}{Definition}[section]
\newtheorem{remark}{Remark}[section]
\newcommand{\norm}[1]{\left\Vert#1\right\Vert}
\newcommand{\Real}{\mathbb R}
\newcommand{\ep}{\epsilon}
\newcommand{\be}{\beta}
\newcommand{\lam}{\lambda}
\newcommand{\ga}{\gamma}
\newcommand{\Tor}{\mathbb{T}^2}
\newcommand{\esp}{\vspace{0.5cm}}
\begin{document}
\title{Non-uniformly hyperbolic diffeomorphisms derived from the standard map}
%\subtitle{Do you have a subtitle?\\ If so, write it here}
\author{Pierre Berger}
\address{Universit\'e Paris 13, Sorbonne Paris Cit\'e, LAGA, CNRS, (UMR 7539), Universit\'e Paris 8, F-93430, Villetaneuse, France.}
\author{Pablo D. Carrasco}
\address{I.M.P.A., Estrada Dona Castorina 110 CEP 22360 420. Rio de Janeiro. Brazil.}
\thanks{The first author is partially supported by the Balzan Research Project of J. Palis. The second author is supported by a C.N.P.Q. grant.}
\date{}

\begin{abstract}
We prove that the system resulting of coupling the standard map with a fast hyperbolic system is robustly non-uniformly hyperbolic.
\end{abstract}
\maketitle
\section{Introduction and Statement of the Results}\label{Sect.1}

The theory of uniform hyperbolicity, or ``Axiom A'' diffeomorphisms is presently almost complete. It encompasses various examples \cite{SmaleBull} and implies strong ergodic properties \cite{EqStaAnosov}.
However, this theory does not include many chaotic phenomena, especially due to obstructions which are topological  (existence of the stable bundle on the basin of attractors) or  geometrical (robust tangencies).

To include more examples, Y. Pesin has introduced the very general concept of non-uniform hyperbolicity. A diffeomorphism $f$ of a manifold $M$ is \emph{non-uniformly hyperbolic} (NUH) with respect to an invariant ergodic probability $\mu$ if  $\mu$-almost every $x \in M$ satisfies that for every  unit vector $v\in T_xM$:
\begin{equation}\label{lyapnonzero}\lim_{n\to \infty} \frac1n \log\|df^n(v)\|\not=0.\end{equation}
In fact, by Oseledec's Theorem, the above limit exists for $\mu\text{ a.e. }x$; $T_xM$ is the direct sum of $df$-invariant subspaces, and on each of these subspaces the limit converges to a single value called the \emph{Lyapunov exponent}. Hence $f$ is non-uniformly hyperbolic with respect to $\mu$ if and only if its Lyapunov exponents are non-zero.

A case of particular interest is when the NUH probability $\mu$ is \emph{physical}, that is, when the basin of $\mu$ (formed by the points $x$ for which the Birkhoff sum $\frac1n \sum_k \delta_{f^k(x)}$ converges weakly to $\mu$) has positive Lebesgue measure. We say then that $f$ has a NUH \emph{physical measure}. Such measures have several properties \cite{LyaPesin}, although their ergodic properties depend on additional hypotheses in general.

Let us first give a list of our favorite examples of NUH physical measures:
\begin{itemize}
\item[$(i)$] Jakobson's Theorem \cite{Jakob}: for a Lebesgue positive set of parameters $c\in \mathbb R$, the quadratic family $x^2+c$ has a NUH physical measure.
\item[$(ii)$] Rees' Theorem \cite{ReesThm}: for a Lebesgue positive set of rational functions (of degree $d\geq 2$), there exists a NUH physical measure.
\item[$(iii)$] The Benedicks-Carleson Theorem \cite{dynHenon},\cite{SRBHenon},\cite{Abundance}: for a Lebesgue positive set of parameters $(a,b)$ with $b$ small, the H\'enon map $$h_{ab}(x,y)=(x^2+y+a,-bx)$$ has a NUH physical measure.
\end{itemize}
All these examples are \emph{abundant}: they hold for a Lebesgue positive set of parameters (of generic families). Moreover, the (real or complex) dimension of the support of the measure is one or close to one.

An interesting candidate to be on this list is the Chirikov-Taylor Standard map family $(\mathbf{s}_{r})_{r\in\Real}$, which is formed by conservative maps of the  $2$-torus $\Tor=\mathbb R^2/2\pi \mathbb Z^2$:
$$\mathbf{s}_{r}(x,y)=(2x-y+ r\sin( x),x ).$$
It is conjectured that for a Lebesgue positive set of large parameters $r$, the map $\mathbf{s}_{r}$ is NUH with respect to the Lebesgue measure (and in particular, it has a NUH physical measure). As a consequence, the Hausdorff dimension of the corresponding measure's support would be two.

%As these maps are conservative, the Hausdorff dimension of the corresponding measure support would be 2.

%As such a  map is conservative, the support of the measure would contain Lebesgue almost every points of its basin and so the Hausdorff dimension of the measure support would be 2, with Lebesgue measure positive.

This conjecture is very hard: the dynamics of the standard map is not understood for any parameter $r\not=0$. On the other hand, the important negative result of P. Duarte \cite{Plenty} states that for a residual set of parameters in $r\geq r_0$, infinitely many elliptic islands coexist (see fig. \ref{KAMisland}). J. De Simoi  \cite{DeS11} showed that this generic set of parameters has Hausdorff dimension at least $1/4$. A. Gorodetski gave a description of a ``stochastic sea'' for a generic set of these parameters \cite{StochasticSea}: his impressive construction showed that for such parameters there exists an increasing sequence of hyperbolic sets with Hausdorff dimension converging to $2$, and such that elliptic islands accumulate on them.
%These results were recently refined by A. Gorodetski \cite{StochasticSea} who established that the set of accumulation points of elliptic islands has Haussdorf dimension equal to 2.

\begin{figure}
	\centering
		\includegraphics{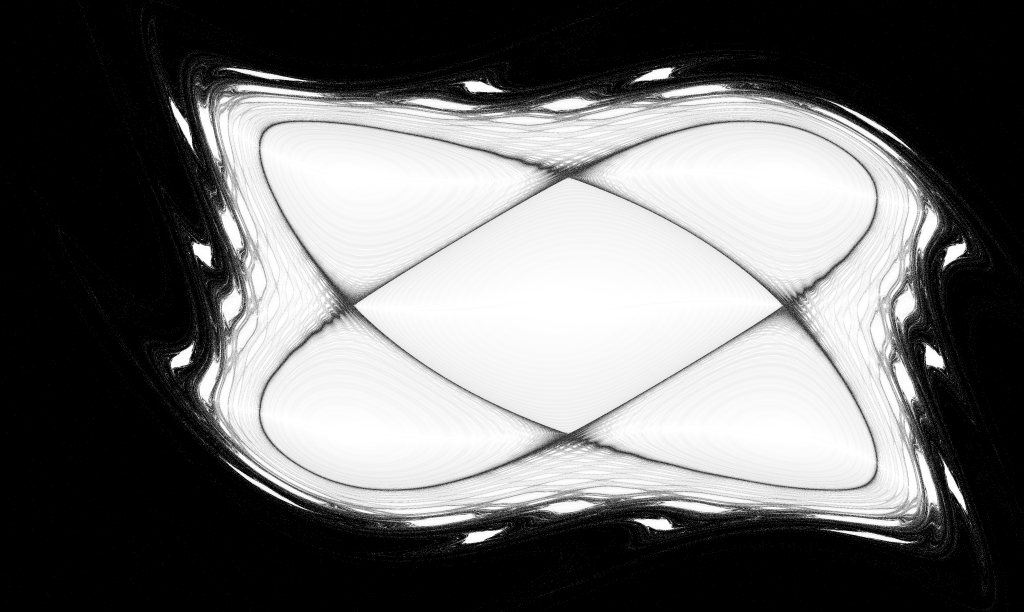}
%  {\fig{1}{cube00001}}
	\caption{Lyapunov exponent for the parameter $r=-0.364$ of the standard map,
	white regions are expected to be elliptic islands whereas the black regions are possibly the homoclinic web of a NUH attractor.}
	\label{KAMisland}
\end{figure}

Another way to construct examples of NUH physical measures is to work with dynamics which (locally) fiber over a uniformly hyperbolic one. Such techniques have been used notably by \cite{RobTranT4}, \cite{Multinonhyp}, and \cite{Patho} to produce new examples which are robustly non-uniformly hyperbolic.

\begin{definition} A map $f$ of a compact manifold $M$ is  $\mathcal{C}^s$-\emph{robustly non-uniformly hyperbolic} if there exists a $\mathcal{C}^s$-neighborhood $U$ of $f$ such that every map $g\in U$ has a NUH physical measure.
\end{definition}

Recently a new example has been given  \cite{ExtLyaExp}, namely a non-hyperbolic ergodic toral automorphism for which most symplectic perturbations are NUH.  The techniques developed there are also suitable for dealing with some conservative cases, pushing forward the method developed in \cite{Patho} for volume preserving diffeomorphisms.

The above list of examples is almost exhaustive. There are very few known NUH physical measures
which  have both  non-uniformly contracting and expanding directions (and which are abundant). In particular, the sophisticated techniques used by Benedicks and Carleson to study the H\'enon attractor are not completely satisfactory for a general theory since they need a small determinant $b$. In order to complete the theory of non-uniform hyperbolicity, more examples are needed.
%The same situation holds for some perturbations of the product of uniformly hyperbolic map with the H\'enon map  \cite{Multinonhyp}. Actually the latter example is not shown to be  robustly NUH but robustly pre-non uniformly hyperbolic:
%\begin{definition} A map $f$ has a \emph{NUH pre-attractor} if there exists
%a set $B$ of positive  Lebesgue measure  (not included in the basin of a uniformly hyperbolic attractor) on which:
%\begin{equation}
%\label{prelyapnonzero}\limsup_{n\to \pm\infty} \big|\frac1n \log\|Tf^n(v)\|\big|>0,\quad \text{for Lebesgue a.e. } x\in B \text { and } \; v\in T_xM
%\end{equation}
%A map $f$ is $\mathcal{C}^s$-\emph{robustly pre-non uniformly hyperbolic} if every $g$ $\mathcal{C}^s$-close to $f$ has NUH pre-attractor for the same set $B$.
%\end{definition}
%\begin{question}\label{unequestion} Does robust pre-non-uniform hyperbolicity implies the existence of a physical NUH ergodic measure? \end{question}
%This question is open when the map is dissipative and have both non uniformly contracting and uniformly expanding directions. See \cite{SRBMostexp} and \cite{Mostcont}.

In this work we show the existence of a non-hyperbolic robust conservative NUH diffeomorphism, adding a
different type of example to the above list.

%%%%%%%%%%%
%Recently it has been given a new example \cite{ExtLyaExp}: for most symplectic perturbations of a non-hyperbolic ergodic toral automorphism, there exists a NUH physical measure. The techniques developed there are also suitable to deal with some conservative cases, pushing forward the method developed in \cite{Patho} for volume preserving diffeomorphisms.

%In this work we show the existence of a robust NUH physical measure for a map which is volume preserving and which has both non-uniformly contracting and expanding directions.
%%%%%%

Let $A\in SL_2(Z)$\ be a hyperbolic matrix with eigenvalues $\lambda<1<1/\lambda$. Consider the manifold $M=\Tor\times \Tor$\ with coordinates $m=(x,y,z,w)$, and the analytic diffeomorphism $f_N:M\rightarrow M$\ given by

$$
f_N(m)=(\mathbf{s}_{N}(x,y)+P_x\circ A^{N}(z,w),A^{2N}(z,w))
$$
where $N\ge 0$, and $P_x$\ is the projection of $\Real^2$ to the $x$-axis $\Real\cdot (1,0)$.

\begin{theorem}\label{main}
There exist $N_0$ and $c>0$\ such that for every $N\geq N_0$,\ the map $f_N$ satisfies for Lebesgue a.e. $m\in \Tor\times \Tor$ and every unit vector $v\in \mathbb R^4$:
\[\lim_{n\to \infty} \big|\frac1n \log\|Tf_N^n(v)\|\big|>c\log N.\]

Moreover the same holds for every conservative diffeomorphism in a $\mathcal{C}^{2}$-neighborhood of $f_N$.
\end{theorem}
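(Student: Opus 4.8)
Since $f_N$ is a skew product over the fast hyperbolic automorphism $(z,w)\mapsto A^{2N}(z,w)$, its tangent map is block upper triangular with respect to the splitting $\mathbb{R}^2_{(x,y)}\oplus\mathbb{R}^2_{(z,w)}$:
$$Tf_N=\begin{pmatrix} B(x) & C\\ 0 & A^{2N}\end{pmatrix},\qquad B(x)=\begin{pmatrix} 2+N\cos x & -1\\ 1 & 0\end{pmatrix},$$
where $C$ is the constant matrix whose first row is the first row of $A^N$ and whose second row vanishes. The invariant plane $\{0\}\times\mathbb{R}^2$ carries the constant cocycle $A^{2N}$, contributing the two Lyapunov exponents $\pm 2N\log(1/\lambda)$, which grow linearly in $N$ and hence exceed $c\log N$ for $N$ large and any small $c$. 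For a block triangular cocycle the Lyapunov spectrum is the union of the spectra of the diagonal blocks, so the remaining two exponents are those of the center cocycle $n\mapsto B(x_{n-1})\cdots B(x_0)$ over $(M,f_N,\mathrm{Leb})$, where $(x_k,y_k,z_k,w_k):=f_N^k(m)$; here $\mathrm{Leb}$ is $f_N$-invariant because $\det Tf_N=\det B(x)\cdot\det A^{2N}\equiv1$, and since $\det B(x)\equiv1$ these exponents equal $\pm\chi^c$ with $\chi^c\ge0$. The whole theorem therefore reduces to showing that $\chi^c(m)\ge c\log N$ for $\mathrm{Leb}$-a.e.\ $m$, and one should get $c$ close to $1/4$.

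\emph{Cone estimates for the center cocycle.} Put $\mathcal{C}=\{(u,v):|v|\le N^{-1/2}|u|\}$ and let $\mathcal{K}=\{x\in\mathbb{T}^1:|\cos x|<2N^{-1/2}\}$ be the critical strip, so that $\mathrm{Leb}_{\mathbb{T}^1}(\mathcal{K})=O(N^{-1/2})$. A direct computation shows that for $x\notin\mathcal{K}$ the matrix $B(x)$ sends $\mathcal{C}$ strictly into itself and expands every vector of $\mathcal{C}$ by a factor $\ge\frac12 N^{1/2}$, while always $\|B(x)^{\pm1}\|\le N+4$. Hence along any orbit
$$\frac1n\log\big\|B(x_{n-1})\cdots B(x_0)\,(1,0)\big\|\ \ge\ \Big(\tfrac12-C\delta_n(m)\Big)\log N,\qquad \delta_n(m):=\frac1n\#\{0\le k<n:x_k\in\mathcal{K}\},$$
after discarding a bounded number of recovery steps following each excursion into $\mathcal{K}$ (on such an excursion the cone may be temporarily lost, but it is recovered quickly along any orbit that does not shadow the contracting direction of the $B$-cocycle for long). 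So it suffices to prove that $\delta_n(m)\to O(N^{-1/2})$ — in particular, eventually below any prescribed $\delta$ — for $\mathrm{Leb}$-a.e.\ $m$.

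\emph{Controlling the time in the critical strip} is the core of the argument. The driving term at step $k$ is $c_k=P_x\circ A^N(z_k,w_k)=\big(A^{(2k+1)N}(z_0,w_0)\big)_1$, and because $A^N$ has primitive first row, moving $(z_0,w_0)$ along a strong unstable leaf $W^{uu}(m)$ (the $A^{2N}$-unstable direction in the $(z,w)$-factor, expanded by $\lambda^{-2N}$) makes $c_0$, and then every $c_k$, sweep uniformly around the circle; moreover $f_N$ refreshes this term at each step. Since $\mathrm{Leb}$ is absolutely continuous along the $W^{uu}$-foliation, one is led to compare the forward statistics of a $\mathrm{Leb}$-generic point with those of the random dynamical system on $\mathbb{T}^2$ given by the fiber maps $F_c(x,y)=(2x-y+N\sin x+c,x)$ with $c$ uniform on $\mathbb{T}^1$. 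That random system is harmless: each $F_c$ preserves $\mathrm{Leb}_{\mathbb{T}^2}$ and its two-step transition kernel from any point is exactly $\mathrm{Leb}_{\mathbb{T}^2}$, so $\mathrm{Leb}_{\mathbb{T}^2}$ is its unique stationary measure and the associated skew product is ergodic; Birkhoff then gives, for a.e.\ driving sequence and a.e.\ initial $(x_0,y_0)$, that the frequency of visits to $\mathcal{K}$ tends to $\mathrm{Leb}_{\mathbb{T}^1}(\mathcal{K})=O(N^{-1/2})$. Transporting this back to $f_N$ yields $\delta_n(m)\to O(N^{-1/2})$ for $\mathrm{Leb}$-a.e.\ $m$, whence $\chi^c(m)\ge(\tfrac12-o(1))\log N$, and one concludes with, say, $c=\tfrac14$.

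The \textbf{main obstacle} is exactly this last transfer: passing from the statistics of the i.i.d.\ random system to $\mathrm{Leb}$-a.e.\ orbit of $f_N$ — equivalently, showing that the set $\{\chi^c=0\}$ (on which, by the cone estimate, orbits would be forced to spend asymptotic frequency $\ge1/3$ in a set of measure $O(N^{-1/2})$, and to shadow the contracting center direction for arbitrarily long stretches) is genuinely $\mathrm{Leb}$-null rather than merely small. This must exploit the strong hyperbolicity of the $A^{2N}$-factor, the absolute continuity of $\mathrm{Leb}$ along $W^{uu}$, and the non-degeneracy of the coupling $P_x\circ A^N$, essentially establishing enough ergodicity for $(f_N,\mathrm{Leb})$. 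Finally, all the ingredients — partial hyperbolicity, the center cone and critical-strip estimates, and the re-randomization mechanism along strong unstable leaves — are robust under $\mathcal{C}^2$-small conservative perturbations, which again preserve Lebesgue and remain partially hyperbolic with a $\mathcal{C}^1$-close splitting and $W^{uu}$-foliation; hence the conclusion persists on a $\mathcal{C}^2$-neighborhood of $f_N$ with a slightly smaller constant $c$.
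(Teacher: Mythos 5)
Your plan correctly identifies the block-triangular structure, the separation of the $(z,w)$-exponents from the center cocycle, and the role of the critical strip $\mathcal{K}$ and a center cone in producing expansion. But what you flag as the ``main obstacle'' is not a remaining technicality --- it \emph{is} the theorem, and neither the i.i.d.\ comparison nor the ``recovery after excursion'' step is actually established in your write-up; these are exactly the points where na\"{\i}ve arguments for the standard map break down. Concretely: (i) the driving sequence $c_k = P_x(A^{(2k+1)N}(z_0,w_0))$ is deterministic and highly correlated with the base orbit, so replacing it by an i.i.d.\ uniform sequence and then trying to push ergodicity of the random system back to Lebesgue-a.e.\ orbit of $f_N$ has no soft proof --- proving such an equidistribution-of-orbits statement for $f_N$ is not easier than the original conclusion; (ii) your claim that after an excursion into $\mathcal{K}$ one discards ``a bounded number of recovery steps'' for ``any orbit that does not shadow the contracting direction for long'' is precisely the uncontrolled scenario that plagues the two-dimensional standard map, and no mechanism is given to rule it out orbitwise. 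Also a small slip: in an upper block-triangular cocycle $\begin{pmatrix}B&C\\0&A^{2N}\end{pmatrix}$ the invariant plane is $\mathbb R^2\times\{0\}$, not $\{0\}\times\mathbb R^2$; the $A^{2N}$-exponents come from the quotient, not from an invariant subbundle, though the conclusion that the spectrum is the union of those of $B$ and of $A^{2N}$ is still correct.

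The paper avoids your obstacle entirely by never trying to control the critical-strip visit frequency of individual orbits. Instead it works spatially, along strong-unstable segments (``$u$-curves''), which by construction sweep the $x$-coordinate across a full fundamental domain $[0,2\pi]$. It then considers the integral $I_n^{\ga,X}=\frac{1}{|\gamma|}\int_\gamma\log\|d_mf_N^n X_m\|\,d\gamma$ for a H\"older center vector field $X$ on $\gamma$ (an ``adapted field''), decomposes $f_N^k\gamma$ into $\approx\mu^{2kN}$ $u$-subcurves, and shows via a good/bad dichotomy (cone condition) with explicit transition counts (Lemmas~\ref{goodtogood}, \ref{badtogood} and Proposition~\ref{G1B1}) that at every step at most $O(N^{-1/2})$ of the subcurves are bad, and from bad ones at least half the subcurves return to good. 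This yields $I_n^{\gamma,X}\gtrsim n\log N$ for every $u$-curve, with no ergodicity or orbit-statistics input. Proposition~\ref{ergo} then converts this purely integral lower bound into an a.e.\ statement by absolute continuity of the strong-unstable foliation and a density-point argument: if a positive-measure set had small central exponent, one would find a short unstable segment mostly in that set, push it forward to a $u$-curve, and contradict the integral bound. The symmetric statement for the contracting center exponent follows from the reversibility identity of Lemma~\ref{fmenosuno}. So the key idea missing from your proposal is to replace ``equidistribution of a.e.\ orbit'' by ``averaged growth over every $u$-curve'' --- a genuinely different, and much more tractable, quantity.
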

A direct consequence of Pesin's ergodic decomposition Theorem for NUH measures \cite{LyaPesin} is:
\begin{corollary}\label{coro}
There exists $N_0$\ such that for every $N\geq N_0$,
in a neighborhood $U$ of $f$ in the space of
$\mathcal{C}^{2}$-conservative maps, every map has a NUH physical measure.
\end{corollary}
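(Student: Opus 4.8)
\section*{Proof proposal for Corollary \ref{coro}}

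The plan is to deduce the corollary directly from Theorem \ref{main} together with Pesin's ergodic decomposition theorem for non-uniformly hyperbolic \emph{smooth} measures. First I would fix $N\ge N_0$ and take for $U$ the $\mathcal{C}^2$-neighbourhood of $f_N$ supplied by the ``moreover'' clause of Theorem \ref{main}, intersected with the space of $\mathcal{C}^2$-conservative diffeomorphisms of $M$. This neighbourhood is non-empty and contains $f_N$ because $f_N$ is itself conservative: it is a skew-product over the area-preserving base map $A^{2N}$ of $\Tor$, and for each fixed $(z,w)$ the fibre map $(x,y)\mapsto \mathbf{s}_{N}(x,y)+(P_x\circ A^{N}(z,w),0)$ has constant (in $(x,y)$) Jacobian matrix $\left(\begin{smallmatrix} 2+N\cos x & -1\\ 1 & 0\end{smallmatrix}\right)$ of determinant $1$; hence $f_N$ preserves the normalized volume $\mathrm{Leb}$ of the compact manifold $M$. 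Now fix any $g\in U$. Then $\mathrm{Leb}$ is a $g$-invariant Borel probability measure, and by Theorem \ref{main} there is a full $\mathrm{Leb}$-measure set of points $m$ at which $\lim_{n}\big|\tfrac1n\log\|Tg^n(v)\|\big|>c\log N>0$ for every unit $v\in T_mM$. By Oseledec's theorem all Lyapunov exponents of $(g,\mathrm{Leb})$ are therefore non-zero (of absolute value at least $c\log N$): $\mathrm{Leb}$ is a non-uniformly hyperbolic invariant measure with no vanishing exponent on a set of full volume.

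Next I would invoke Pesin's ergodic decomposition theorem for smooth hyperbolic measures \cite{LyaPesin}. Since $g$ is $\mathcal{C}^2$ (hence $\mathcal{C}^{1+\mathrm{Lip}}$) and $\mathrm{Leb}$ is a volume with all exponents non-zero, absolute continuity of the Pesin stable and unstable laminations forces the ergodic components of $(g,\mathrm{Leb})$ to have positive volume, so there are at most countably many of them. Thus one may write $\mathrm{Leb}=\sum_{i\ge 1}a_i\,\mu_i$ with $a_i>0$, $\sum_i a_i=1$, where $\mu_i$ is the normalized restriction of $\mathrm{Leb}$ to an atom $P_i$ of the ergodic partition (so $\mathrm{Leb}(P_i)=a_i$ and $\mu_i\ll\mathrm{Leb}$), and each $\mu_i$ is ergodic and again non-uniformly hyperbolic, its exponents coinciding $\mu_i$-a.e.\ with those of $\mathrm{Leb}$. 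Applying Birkhoff's ergodic theorem to $(g,\mu_i)$ yields $\mu_i\bigl(B(\mu_i)\bigr)=1$, where $B(\mu_i)$ denotes the basin of $\mu_i$; combining this with $\mu_i\ll\mathrm{Leb}$ and $\mu_i=\mathrm{Leb}(P_i)^{-1}\,\mathrm{Leb}|_{P_i}$ gives $\mathrm{Leb}\bigl(B(\mu_i)\bigr)\ge \mathrm{Leb}(P_i)=a_i>0$. Hence $\mu_1$ (say) is a NUH physical measure for $g$. Since $g\in U$ was arbitrary, every $\mathcal{C}^2$-conservative map in $U$ has a NUH physical measure, which is precisely the assertion of the corollary.

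Since the statement is a repackaging of standard Pesin theory on top of Theorem \ref{main}, I do not expect a genuine obstacle. The one point requiring care is the countability of the ergodic components: this is exactly where the hypothesis that the invariant measure is the smooth volume (and not an arbitrary invariant measure) enters, since for a general invariant measure the ergodic decomposition could a priori consist of uncountably many components each with a null basin, and physicality would then fail. I would also remark, for context, that the same argument shows that if $(g,\mathrm{Leb})$ happens to be ergodic then $\mathrm{Leb}$ itself is the unique physical measure and its basin has full volume; and that using the ``moreover'' statement of Theorem \ref{main} for the neighbourhood $U$ is what upgrades this from a statement about $f_N$ alone to the $\mathcal{C}^2$-robust statement of Corollary \ref{coro}.
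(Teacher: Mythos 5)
Your argument is correct and is exactly the route the paper intends: the authors state Corollary \ref{coro} as a ``direct consequence of Pesin's ergodic decomposition Theorem for NUH measures'' without writing out the details, and your proposal simply fills those in (invariance of volume, non-vanishing exponents from Theorem \ref{main}, countability of the ergodic components of a smooth hyperbolic measure, and Birkhoff's theorem to get a positive-volume basin). No discrepancy with the paper's approach.
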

A similar result was announced in \cite{RobNon}. We hope that this new example will
% to answer to Question \ref{unequestion}. Also it might
be helpful to study the standard map, notably by wondering about the ergodicity and the mixing property of the Lebesgue measure with the dynamics $f_N$ (for most of the volume preserving perturbations of $f_N$ the Lebesgue measure is ergodic, as explained in the last section).

At the end of the paper we illustrate how the techniques of the paper can be used to show the same results for families of maps $(g_{N_1,N_2})_{N_1\le N_2}$ of the following form: for $p\in \mathbb Z$ and $L\in SL_2(\mathbb Z)$ such that $L_{[1,1]}\not=0$, consider
\[g_{N_1,N_2}\colon (x,y,z,w)\in  M \mapsto (px-y+N_1 \sin(x), x,0,0) +(L(z,w) , A^{N_2}(z,w)).\]

%In the last section, we explain how the presented proof seems to show the same results by replacing the family $(f_N)_N$ by the following family of maps $(g_N)_N$. Fix any  $p\in \mathbb Z$ and $L\in SL_2(\mathbb Z)$ such that $L_{[1,1]}\not=0$, then put:
%\[g_N\colon (x,y,z,w)\in  M \mapsto (px-y+N \sin(x), x,0,0) +(L(z,w) , A^N(z,w))\]

%
% we will give a larger class of maps on which the presented proof seems to work with minor changes (noted there), in order to obtain the same conclusions as for $(f_N)_N$.

%The coupled systems that we discuss here can be used as models for physical phenomena: the dynamics on the base torus $\{0\}\times\Tor$ would model the microscopic forces which perturb the macroscopic scale forces acting on
%$\Tor\times\{0\}$. See \cite{UnivSta} for a discussion of physical aspects of some similar systems.

The standard family $(\mathbf{s}_{r})_r$ is related to numerous physical problems; see for instance \cite{UnivSta}, \cite{I80}, \cite{SS95}.

On the other hand, there is a physical interpretation of the coupling of a dynamical system $\phi$ with an Anosov diffeomorphism; the dynamics of the Anosov map would model the microscopic forces which perturb the macroscopic scale endowed with the dynamics $\phi$. See \cite{MacKay12}.

%--------------
%-----------------
%
%Another crucial example is the following, which is not abundant in such a sens, but has an actual physical meaning corresponding to the Bolzmann conjecture:\begin{itemize}
%\item[$(i)$] Sim\'anyi-Sz\'asz Theorem which states that the system of N elastically colliding hard balls with
%masses $m_1, \dots , \;m_N$ and radius $r$, moving uniformly in the flat torus $\mathbb R^\nu/L\cdot \mathbb Z^\nu$ is NUH w.r.t. to an union of measures of Lebesgue positive support, for a.e. parameters $(m_1, \dots , m_N, L)$.
%\end{itemize}
%
%
%
%
%

\section{Preliminaries}\label{Sect.2}

Along the proof $N$ will be supposed to be larger and larger in order to satisfy finitely many  conditions.

In this section we collect some basic facts about the dynamical properties of $f_N$\ and its inverse. Let us begin by noting the following symmetry property.

\begin{lemma}\label{fmenosuno}
The map $f_N^{-1}$\ is conjugated to the map
$$(x,y,z,w)\mapsto (\mathbf{s}_N(x,y)+ P_x\circ A^{-N}(z,w), A^{-2N}(z,w)).$$
%where $A'$\ is a matrix similar to $A^{-1}$ (hence hyperbolic).
\end{lemma}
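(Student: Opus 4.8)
The plan is to exploit the time-reversal symmetry of the standard map. Let $R\colon\Tor\to\Tor$ be the involution $R(x,y)=(y,x)$; from $\mathbf{s}_r(x,y)=(2x-y+r\sin x,x)$ one reads off directly that $\mathbf{s}_r^{-1}(x,y)=(y,2y-x+r\sin y)$, i.e. $\mathbf{s}_r^{-1}=R\circ\mathbf{s}_r\circ R$. I claim that the required conjugacy is realized by the involution $H\colon M\to M$, $H(x,y,z,w)=(y,x,z,w)$, which is an analytic diffeomorphism of $M=\Tor\times\Tor$ (being linear with integer matrix), acting as $R$ on the first $\Tor$-factor and as the identity on the second.

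First I would compute $f_N^{-1}$ by inverting the two blocks of the skew product. If $f_N(x,y,z,w)=(x',y',z',w')$, then $(z,w)=A^{-2N}(z',w')$, and substituting this into $\mathbf{s}_N(x,y)=(x',y')-P_x\circ A^{N}(z,w)$ gives $\mathbf{s}_N(x,y)=(x',y')-P_x\circ A^{-N}(z',w')$, because $A^{N}A^{-2N}=A^{-N}$. Renaming the variables, this says
\[
f_N^{-1}(x,y,z,w)=\Bigl(\mathbf{s}_N^{-1}\bigl((x,y)-P_x\circ A^{-N}(z,w)\bigr),\ A^{-2N}(z,w)\Bigr).
\]

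Next I would conjugate by $H$. Two elementary ingredients are used: the reversibility identity in the form $R\circ\mathbf{s}_N^{-1}=\mathbf{s}_N\circ R$ (from $R^2=\mathrm{id}$), and the equivariance $\mathbf{s}_N(x,y-c)=\mathbf{s}_N(x,y)+(c,0)$, valid for every $c$ because $y$ enters $\mathbf{s}_N$ only through the term $-y$ in the first coordinate and is absent from the second. Writing $P_x\circ A^{-N}(z,w)=(p,0)$ with $p$ the first coordinate of $A^{-N}(z,w)$, one gets $(y,x)-(p,0)=(y-p,x)$ and $R(y-p,x)=(x,y-p)$, hence
\[
H\circ f_N^{-1}\circ H(x,y,z,w)=\Bigl(\mathbf{s}_N(x,y-p),\ A^{-2N}(z,w)\Bigr)=\Bigl(\mathbf{s}_N(x,y)+P_x\circ A^{-N}(z,w),\ A^{-2N}(z,w)\Bigr),
\]
which is precisely the map in the statement. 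An equivalent and perhaps tidier route, bypassing the closed form of $f_N^{-1}$, is to verify the identity $f_N\circ H\circ g_N=H$ directly, where $g_N$ denotes the map on the right-hand side; this is the same short computation run forward.

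I do not expect a genuine obstacle: the lemma is a symmetry bookkeeping statement rather than a dynamical one. The only points deserving care are that all the maps involved descend to the torus — the linear parts lie in $GL_2(\mathbb Z)$ (resp. $GL_4(\mathbb Z)$ for $H$) and the nonlinearity of $\mathbf{s}_N$ is $2\pi$-periodic — and keeping track of the coupling translation as it is transported through $H$: it appears as a translation of the first coordinate inside $f_N^{-1}$, is turned into a translation of the second coordinate by the swap $R$, and re-emerges as the stated first-coordinate translation after applying the equivariance of $\mathbf{s}_N$.
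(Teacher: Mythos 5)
Your proof is correct and follows essentially the same route as the paper's: the conjugating involution you call $H$ is exactly the paper's $\hat R$, and your use of the reversibility $\mathbf{s}_N^{-1}=R\circ\mathbf{s}_N\circ R$ together with the equivariance $\mathbf{s}_N(x,y-c)=\mathbf{s}_N(x,y)+(c,0)$ is an algebraically equivalent packaging of the paper's factorization $\mathbf{s}_N=R\circ J$ and the identity $J((i,j)-(0,k))=J(i,j)+(0,k)$. The only difference is cosmetic bookkeeping; the conjugacy, the symmetry used, and the computation are the same.
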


\esp

\begin{proof}

Consider the involutions

$$R(x,y)=(y,x)\quad \quad
J(x,y)=(x,2x-y+{N} \sin{ x})$$
and observe that  $R\circ J=\mathbf{s}_N$, hence $\mathbf{s}_N^{-1}=J\circ R$\ (this is the \emph{reversibility} of the standard map). To seek the inverse of $f_N$, we put

$$(a,b,c,d)= (\mathbf{s}_N(x,y)+ P_x\circ A^{N}(z,w), A^{2N}(z,w)).$$

Thus $(z,w)=A^{-2N}(c,d)$, and if we denote by $P_y$\ the projection to the $y$-axis, we get
\begin{multline*}
(x,y)=\mathbf{s}_N^{-1}((a,b)-P_x\circ A^{N}(z,w))=\mathbf{s}_N^{-1}((a,b)-P_x\circ A^{-N}(c,d))\\
=J(R(a,b)-P_y\circ R\circ  A^{-N}(c,d))=J\circ R(a,b)+P_y\circ R\circ  A^{-N}(c,d)
\end{multline*}
since $J((i,j)-(0,k))=J(i,j)+(0,k)$.  We conclude
\begin{equation}
f^{-1}_N(a,b,c,d)=(J\circ R(a,b)+P_y\circ R\circ  A^{-N}(c,d), A^{-2N}(c,d)).
\end{equation}

Finally, if $\hat R$ is the involution  $\hat R(a,b,c,d)=(R(a,b),(c,d))$, we obtain

\begin{equation}
\hat R\circ f^{-1}\circ \hat R(a,b,c,d)=(\mathbf{s}_N(a,b)+ P_x\circ A^{-N}(c,d), A^{-2N}(c,d)).
\end{equation}

\end{proof}

This Lemma allows us to establish simultaneously properties for $f_N$\ and $f_N^{-1}$. For example, to prove that $f_N$\ is NUH it suffices to show that it has two positive exponents.

As mentioned, the map $f_N$\ will be proved to be partially hyperbolic. We recall here this notion and the important stable manifold theorem.

\begin{definition}
Let  $M$\ be a closed Riemannian manifold. A $\mathcal{C}^1$\ diffeomorphism  $f: M \rightarrow  M$\ is \emph{partially hyperbolic} if there exists a continuous splitting of the tangent bundle of the form  $$TM=E^u\oplus E^c\oplus E^s$$\ where both bundles $E^s,E^u$\ have positive dimension, and such that
\begin{enumerate}
  \item All bundles $E^u,E^s,E^c$\ are $df$-invariant.
  \item For every $m\in M$, for every unitary vector $v^{\sigma}\in E^{\sigma}_m, \sigma=s,c,u$,
  \begin{gather*}
  \norm{d_mf(v^{s})}<1<\norm{d_mf(v^{u})}\\
  \norm{d_mf(v^{s})}< \norm{d_mf(v^{c})}<\norm{d_mf(v^{u})}
  \end{gather*}
\end{enumerate}
\end{definition}

The bundles $E^s,E^u,E^c$\ are called the \emph{stable, unstable} and \emph{center} bundle respectively. The reference \cite{PesinLect} contains a good account of partial hyperbolicity.
\begin{theorem}[Stable Manifold Theorem]
If $f\in Diff^r(M)$\ is partially hyperbolic then the bundles $E^s,E^u$\ are (uniquely) integrable to continuous foliations $\mathcal{F}^s=\{W^s(m)\},\mathcal{F}^u=\{W^u(m)\}$\ resp. called the \emph{stable} and the \emph{unstable} foliations, whose leaves are $\mathcal{C}^r$\ immersed submanifolds.
\end{theorem}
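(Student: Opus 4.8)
The plan is to prove this by the classical Hadamard--Perron graph-transform method --- this is essentially the Hirsch--Pugh--Shub invariant manifold theorem --- noting that, since the statement only asks for $\mathcal{C}^r$ regularity of the leaves and $\mathcal{C}^0$-continuity of the foliation (not transverse regularity), no bunching hypothesis beyond the domination in the Definition is needed. First I would replace the Riemannian metric by a smooth adapted one so that there are constants $0<\mu<\nu<1$ and a continuous cone field with $\norm{d_mf(v)}\le\mu$ for every unit $v\in E^s_m$ and $\norm{d_mf(v)}\ge\nu$ for every unit $v\in E^c_m\oplus E^u_m$, uniformly in $m\in M$; this is possible exactly because of the strict domination $\norm{d_mf(v^s)}<\norm{d_mf(v^c)}$. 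Covering the compact $M$ by finitely many charts in which $E^s$ and $E^c\oplus E^u$ are $\ep$-close to fixed complementary coordinate subspaces $\Real^{\dim E^s}$ and $\Real^{\dim E^c+\dim E^u}$, the map $f$ near a forward orbit $(f^nm)_{n\ge0}$ becomes a sequence of maps $F_n\colon B^s\times B^{cu}\to\Real^{\dim E^s}\times\Real^{\dim E^c+\dim E^u}$, each written $F_n(u,v)=(S_nu+\phi_n(u,v),\,T_nv+\psi_n(u,v))$ with $\norm{S_n}\le\mu$, $\norm{T_n^{-1}}\le1/\nu$, the nonlinear terms $\phi_n,\psi_n$ of small $\mathcal{C}^1$ norm, and the whole family uniformly $\mathcal{C}^r$-bounded.

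On the complete metric space $\mathcal{G}$ of graphs $\{(u,g(u)):u\in B^s\}$ with $g\colon B^s\to B^{cu}$ $1$-Lipschitz and $\norm{g(0)}$ small, the graph transform determined by the relation $f(W^s_{\mathrm{loc}}(m))\subset W^s_{\mathrm{loc}}(fm)$ --- which expresses the graph at $m$ implicitly from the graph at $fm$ --- preserves $\mathcal{G}$ precisely because $\mu<\nu$; a standard fixed-point argument along the forward orbit, uniform in $m$, then produces a unique such Lipschitz graph $W^s_{\mathrm{loc}}(m)$, tangent to $E^s_m$ at $m$. To upgrade the leaves to $\mathcal{C}^r$ I would invoke the fiber-contraction theorem: over $\mathcal{G}$ build the bundle whose fiber is the space of admissible candidate first derivatives (then, inductively on $k\le r$, of $k$-jets); the graph transform lifts to a bundle map covering the contraction on $\mathcal{G}$ and contracting on each fiber, hence admits a continuous invariant section, which one identifies with the genuine derivatives of the fixed graph --- so $W^s_{\mathrm{loc}}(m)$ is a $\mathcal{C}^r$ embedded disk. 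Continuity of $m\mapsto W^s_{\mathrm{loc}}(m)$ as a family of $\mathcal{C}^1$ embeddings (equivalently: $\{W^s_{\mathrm{loc}}(m)\}$ defines a continuous foliation) follows from continuity of the data $m\mapsto(E^s_m,\text{local jets of }f)$ together with uniformity of the contraction. For uniqueness, any $\mathcal{C}^1$ disk $D\ni m$ tangent to $E^s_m$ is pushed by forward iteration into the stable cone field with $\mathrm{diam}(f^nD)\to0$ at rate $\mu^n$; comparing with the fixed point forces $D$ to agree with $W^s_{\mathrm{loc}}(m)$ near $m$, so $E^s$ is uniquely integrable. Finally $W^s(m):=\bigcup_{n\ge0}f^{-n}\big(W^s_{\mathrm{loc}}(f^nm)\big)$ is an injectively immersed $\mathcal{C}^r$ submanifold; the unstable foliation is produced by running the identical argument for $f^{-1}$, whose hyperbolic data is that of $f$ with $E^s$ and $E^u$ interchanged (for the specific $f_N$ one could alternatively invoke the reversibility established in Lemma~\ref{fmenosuno}).

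The step I expect to be the main obstacle is organizing the estimates so that they are uniform over all of $M$: choosing one chart radius and one graph radius that work simultaneously at every point and along its entire forward orbit, and verifying that the composed graph transform --- and each of its jet lifts --- contracts at a rate independent of the orbit. This is precisely where compactness of $M$, the adapted metric, and the strict domination $\mu<\nu$ are used, the latter to control the off-diagonal blocks and the nonlinear remainders of $F_n$. Once this $\mathcal{C}^1$ bookkeeping is complete, the passage to $\mathcal{C}^r$ via the fiber-contraction principle, and the globalization of the leaves, are essentially formal.
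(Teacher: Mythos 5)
The paper does not prove this theorem: it is quoted as a known result, with a pointer to Section~4 of \cite{PesinLect} (whose proof in turn follows Hirsch--Pugh--Shub). Your outline via an adapted metric, the Hadamard--Perron graph transform in the space of Lipschitz graphs over $E^s$, and the fiber-contraction theorem to lift to $C^r$ jets is precisely that standard argument, and you are correct that domination of $E^s$ by $E^c\oplus E^u$ (rather than any bunching condition) is what is needed, since only the leaves --- not the transverse regularity of the foliation --- are asserted to be $C^r$. One small clarification worth making in your uniqueness step: the disk $D$ should be taken tangent to $E^s$ at \emph{every} point (not merely at $m$), so that forward iterates remain in the stable cone field and contract; with that reading the argument is the usual one, and the unstable case indeed follows by applying the whole scheme to $f^{-1}$.
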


See Section 4 in \cite{PesinLect} for the proof of this theorem. Let us come back to the study of $f_N$.

Consider a normal eigenbasis $\{e^s,e^u\}$\ of $A$\ corresponding to the respective eigenvalues $\lam<1<\frac{1}{\lam}=:\mu$. Put $E^u_A=span\{e^u\}$ and $E^s_A=span\{e^s\}$. We also consider the following vector subspaces of $\Real^4$:
\[E^u_0=\{0\}\times E^u_{A},\quad E^s_0=\{0\}\times E^s_{A},\quad E^c=\Real^2\times \{0\}.\]
For each $m\in M, p\in \Tor$ we identify $T_mM \equiv\Real^4,\ T_p\Tor\equiv\Real^2$.

\begin{proposition}\label{partialhyp}
If $N$\ is sufficiently large the map $f_N$\ is  partially hyperbolic with center bundle $E^c$, and one dimensional stable and unstable direction $E^s$ and $E^u$ respectively. Moreover,
there exists a differentiable vector field $m\in M\mapsto (\alpha_m, e^u)\in \mathbb R^2\times \mathbb R^2$ such that:
\begin{itemize}
\item for every $m\in M$, $df_N(\alpha_m,e^u)= \mu^{2N}(\alpha_{f(m)},e^u)$, and so  $(\alpha_m, e^u) $ is tangent to $E^u$.
\item $\|\alpha_m-\lambda^N P_x(e^u)\|\le \lambda^{2N}$.
\end{itemize}
% if $E^u_N$ denotes the corresponding  unstable bundle of $f_N$, then for some constant $C$,\ we have
%%$$|sin\,\angle(E^{\sigma}_f,E^{\sigma}_0)|\leq
%%C
%%\lam^N\quad  \sigma=u,s.$$
%\begin{equation*}
%\diamondsuit\ \qquad
%sin\,\angle(E^{u}_N,(\lambda^N P_x(e^u),e^u))|\leq
%\lam^{2N}\quad
%\end{equation*}
%Moreover for every $u\in E^u_N$:
%\[\|df_N(u)\|=\mu^{2N} \|u\|.\]
%
%A similar statement holds for the stable bundle $E^s$.
\end{proposition}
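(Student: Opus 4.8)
The plan is to exploit the triangular (skew-product) structure of $f_N$ over the fast Anosov factor $A^{2N}$ on the $(z,w)$-torus. Writing $m=(p,q)$ with $p=(x,y)\in\Tor$ and $q=(z,w)\in\Tor$, the second component of $f_N$ is just $q\mapsto A^{2N}q$, so $df_N$ has block lower-triangular form
\[
df_N=\begin{pmatrix} d\mathbf{s}_N & B_m\\ 0 & A^{2N}\end{pmatrix},
\]
where $B_m$ is the derivative of $q\mapsto P_x\circ A^N q$, i.e. a fixed linear map $\Real^2\to\Real^2$ of the form $v\mapsto (P_x(A^N v),0)$ that does \emph{not} depend on $m$. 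The factor $A^{2N}$ has eigenvalues $\mu^{2N}>1>\lambda^{2N}$ along the constant directions $E^u_A$, $E^s_A$. The expanding direction $\{0\}\times E^u_A$ is, however, not $df_N$-invariant because of the coupling block $B_m$; the first bullet of the Proposition asks for the correction term $\alpha_m$ that straightens it out, and the second bullet controls the size of that correction.

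First I would settle the invariant unstable direction. I look for a vector field $m\mapsto(\alpha_m,e^u)$ with $df_N(\alpha_m,e^u)=\mu^{2N}(\alpha_{f_N(m)},e^u)$. Reading off the first component, this is the cohomological equation
\[
d\mathbf{s}_N(\alpha_m)+P_x\circ A^N(e^u)=\mu^{2N}\,\alpha_{f_N(m)},
\]
i.e. $\alpha_{f_N(m)}=\mu^{-2N}\bigl(d\mathbf{s}_N(\alpha_m)+\lambda^N P_x(e^u)\bigr)$, using $A^N e^u=\mu^N e^u$ — wait, $e^u$ is the \emph{unstable} eigenvector, so $A^N e^u=\mu^N e^u$; but the Proposition's estimate features $\lambda^N$, so in fact the relevant normalization is that $P_x\circ A^N$ applied to $e^u$ scales like $\mu^N$, and one divides by $\mu^{2N}$, leaving a $\mu^{-N}=\lambda^N$ factor. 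So the fixed-point equation for the section $\alpha$ is a graph transform: define $(\mathcal{T}\alpha)_m = \mu^{-2N}\bigl(d\mathbf{s}_{f_N^{-1}(m)}(\alpha_{f_N^{-1}(m)})+\lambda^N P_x(e^u)\bigr)$ on the space of continuous (resp.\ $\mathcal{C}^1$) sections $M\to\Real^2$. Since $\|d\mathbf{s}_N\|=O(N)$ on the compact manifold (the standard map derivative has entries bounded by a constant times $N$), the operator $\mathcal{T}$ contracts with factor $O(N\mu^{-2N})$, which is $<1$ for $N$ large; so $\mathcal{T}$ has a unique fixed section $\alpha$, and it is as smooth as $f_N$ by the usual fiber-contraction / $\mathcal{C}^1$-section argument. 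Iterating $\alpha=\mathcal{T}\alpha$ once and bounding crudely gives $\|\alpha_m-\mu^{-2N}\lambda^N P_x(e^u)\|\le O(N\mu^{-2N})\cdot\sup\|\alpha\|$, and a second pass, together with $\sup\|\alpha\|=O(\mu^{-N})$, sharpens this to $\|\alpha_m-\lambda^N P_x(e^u)\|\le\lambda^{2N}$ for $N$ large — this is the place where one must be a little careful with the constants, and it is the main computational (though not conceptual) obstacle.

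Then partial hyperbolicity follows by exhibiting the splitting $E^u=\Real(\alpha_m,e^u)$, $E^s=\{0\}\times E^s_A$, $E^c=\Real^2\times\{0\}$. By construction $E^u$ is $df_N$-invariant with expansion exactly $\mu^{2N}$; $E^s$ is invariant (the block structure sends $\{0\}\times\Real^2$ into itself along $E^s_A$) with contraction $\lambda^{2N}<1$; and $E^c=\Real^2\times\{0\}$ maps into itself with $df_N|_{E^c}=d\mathbf{s}_N$, whose norm and conorm are $O(N)$ and $\Omega(1/N)$ — in any case trapped strictly between $\lambda^{2N}$ and $\mu^{2N}$ once $N$ is large. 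Continuity of the splitting is clear since $\alpha$ is continuous and $E^s,E^c$ are constant. This verifies all the requirements of the definition of partial hyperbolicity and establishes both bullet points, completing the proof. I would remark that Lemma~\ref{fmenosuno} is not needed here but will later let us transfer the analogous statement to $f_N^{-1}$, producing the stable side symmetrically.
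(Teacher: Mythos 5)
Your construction of $E^u$ is essentially the paper's: the fixed-point/graph-transform argument you sketch, when unrolled along backward orbits, is literally the series $\alpha_m=\lambda^N\sum_{k\ge0}\lambda^{2Nk}\,d_{(x_{-1},y_{-1})}\mathbf{s}_N\cdots d_{(x_{-k},y_{-k})}\mathbf{s}_N(P_x(e^u))$ that the paper writes down, and the contraction factor $\lambda^{2N}\|d\mathbf{s}_N\|=O(N\lambda^{2N})$ is exactly what makes it converge and be $\mathcal{C}^1$. Be careful with one transcription slip though: from $d\mathbf{s}_N(\alpha_m)+P_x(A^Ne^u)=\mu^{2N}\alpha_{f_N(m)}$ one gets $\alpha_{f_N(m)}=\lambda^{2N}d\mathbf{s}_N(\alpha_m)+\lambda^N P_x(e^u)$; you wrote $\mu^{-2N}\bigl(d\mathbf{s}_N(\alpha_m)+\lambda^N P_x(e^u)\bigr)$, which would make the constant term $\lambda^{3N}P_x(e^u)$ and is inconsistent with the bound $\|\alpha_m-\lambda^N P_x(e^u)\|\le\lambda^{2N}$ you correctly assert at the end.

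The genuine error is the claim that $E^s=\{0\}\times E^s_A$ is $df_N$-invariant. With the block upper-triangular form $\begin{pmatrix}d\mathbf{s}_N & P_x\circ A^N\\ 0 & A^{2N}\end{pmatrix}$, a vector $(0,e^s)$ is mapped to $(P_x(A^Ne^s),\lambda^{2N}e^s)=(\lambda^N P_x(e^s),\lambda^{2N}e^s)$, and $P_x(e^s)\neq0$ because $A\in SL_2(\mathbb{Z})$ is hyperbolic, so this has a nonzero first component and leaves $\{0\}\times E^s_A$. The correct stable bundle also needs a small $E^c$-correction $\beta_m$, and it is exactly the route you mention at the very end — transfer the $E^u$-construction to $f_N^{-1}$ via Lemma~\ref{fmenosuno} — that produces it ($\hat R\circ f_N^{-1}\circ\hat R$ has the same skew-product form with $A\mapsto A^{-1}$, so its $E^u$ is spanned by $(\alpha'_m,e^s)$ with $\|\alpha'_m-\lambda^N P_x(e^s)\|\le\lambda^{2N}$, and conjugating back by $\hat R$ yields $E^s$). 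As written, your proof simultaneously asserts the false invariance and appeals to the symmetric argument; you should drop the former and carry out the latter, since partial hyperbolicity requires an actual $df_N$-invariant stable line field with the required domination over $E^c$.
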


\esp
A consequence of Proposition \ref{partialhyp} is:

\begin{corollary}\label{partialcoro}
If $N$ is sufficiently large then for all $m\in M, v=(v_x,v_y,v_z,v_w)\in E^{u}_m\setminus\{0\}$ it holds
$$
%\frac{\lam^{N}}{2}\norm{P_x(e^{u})}
\lam^{N}(\|P_x(e^u)\|-3\lam^N)
\leq \frac{|v_x|}{\norm{v}}\leq
\lam^{N}(\|P_x(e^u)\|+3\lam^N)
% 2\lam^{N}\norm{P_x(e^{u})}
$$
with $\norm{P_x(e^{u})}>0$.
\end{corollary}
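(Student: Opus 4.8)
The plan is to exploit that, by Proposition \ref{partialhyp}, $E^u$ is one–dimensional and $E^u_m$ is spanned by $(\al_m,e^u)\in\Real^2\times\Real^2$, the first factor carrying the $(x,y)$–tangent coordinates and the second the $(z,w)$–ones. Thus any $v=(v_x,v_y,v_z,v_w)\in E^u_m\setminus\{0\}$ equals $c\,(\al_m,e^u)$ for some $c\neq 0$, so $(v_x,v_y)=c\,\al_m$ and $(v_z,v_w)=c\,e^u$. Since $e^u$ has unit norm, $\norm{v}=\abs{c}\sqrt{\norm{\al_m}^2+1}$ and $\abs{v_x}=\abs{c}\,\norm{P_x(\al_m)}$, whence the ratio does not depend on $v$:
\[
\frac{\abs{v_x}}{\norm{v}}=\frac{\norm{P_x(\al_m)}}{\sqrt{\norm{\al_m}^2+1}}.
\]
Everything then reduces to estimating this expression using the bound $\norm{\al_m-\lam^{N}P_x(e^u)}\le\lam^{2N}$ supplied by Proposition \ref{partialhyp}.

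For the numerator I would use that $P_x$ is an orthogonal projection, so $\norm{P_x}\le 1$ and $P_x\circ P_x=P_x$; then $\norm{P_x(\al_m)-\lam^{N}P_x(e^u)}=\norm{P_x\bigl(\al_m-\lam^{N}P_x(e^u)\bigr)}\le\lam^{2N}$, hence
\[
\bigl|\,\norm{P_x(\al_m)}-\lam^{N}\norm{P_x(e^u)}\,\bigr|\le\lam^{2N}.
\]
For the denominator, $\norm{\al_m}\le\lam^{N}\norm{P_x(e^u)}+\lam^{2N}\le 2\lam^{N}$ because $\norm{P_x(e^u)}\le\norm{e^u}=1$, so $1\le\sqrt{\norm{\al_m}^2+1}\le 1+2\lam^{2N}$. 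The upper estimate of the Corollary follows at once, since $\abs{v_x}/\norm{v}\le\lam^{N}\norm{P_x(e^u)}+\lam^{2N}\le\lam^{N}\bigl(\norm{P_x(e^u)}+3\lam^{N}\bigr)$. For the lower one, put $a:=\norm{P_x(e^u)}$; then
\[
\frac{\abs{v_x}}{\norm{v}}\ge\frac{\lam^{N}a-\lam^{2N}}{1+2\lam^{2N}},
\]
and clearing the positive denominator shows this is $\ge\lam^{N}a-3\lam^{2N}=\lam^{N}(a-3\lam^{N})$ provided $1+3\lam^{2N}\ge\lam^{N}a$, which holds because $a\le 1$ and $\lam<1$. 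This gives both inequalities of the statement, with a little room to spare in the constant.

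Finally one must record $\norm{P_x(e^u)}>0$: if $P_x(e^u)=0$ then $e^u$ is proportional to $(0,1)$, and comparing entries in $Ae^u=\mu e^u$ would force $\mu$ to equal the lower–right entry of $A$, an integer — impossible, since a hyperbolic matrix in $SL_2(\mathbb{Z})$ has irrational eigenvalues. I do not expect any genuine obstacle in this Corollary; it is a direct quantitative consequence of Proposition \ref{partialhyp}, the only point requiring a bit of care being the bookkeeping that propagates the error term $\lam^{2N}$ into the stated slack $3\lam^{N}$.
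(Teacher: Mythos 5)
Your proof is correct and follows essentially the same strategy as the paper's: reduce to the single spanning vector $(\alpha_m,e^u)$ of the one-dimensional bundle $E^u_m$, and translate the estimate $\|\alpha_m-\lambda^N P_x(e^u)\|\le\lambda^{2N}$ from Proposition \ref{partialhyp} into bounds on $|v_x|$ and $\|v\|$. The only stylistic differences are that you make the ratio $\|P_x(\alpha_m)\|/\sqrt{\|\alpha_m\|^2+1}$ explicit and observe $P_x\circ P_x=P_x$, while the paper bounds numerator and denominator directly; the content and the bookkeeping of the $\lambda^{2N}$ error into the stated $3\lambda^N$ slack are the same.
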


Similar statements hold for the stable bundle $E^s_N$.

\begin{proof}[Proof of Proposition \ref{partialhyp}]

The derivative of $f_N$\ at a point $m=(x,y,z,w)\in M$\ is given by
$$
d_mf_N=\begin{pmatrix}
           d_{(x,y)}\mathbf{s}_N &  P_x\circ A^{N} \\
           0 & A^{2N} \\
         \end{pmatrix}
$$
From here we deduce that $E^c$\ is a $df$-invariant bundle, and furthermore the action on a vector $v=(v_x,v_y,0,0)\in E^c_m$\ is given by

\begin{equation}\label{accentral}
d_mf_N(v_x,v_y,0,0) = (d_{(x,y)}\mathbf{s}_N(v_x,v_y),0,0)
\end{equation}

In particular (by reversibility of the standard map), we have

\begin{equation}\label{normadfcentro}
\frac{1}{2N}\norm{v}\leq \norm{d_mf_N(v)} \leq 2N\norm{v}
\end{equation}

Let us study the unstable direction. For $m\in M$, consider:
\[\alpha_m:= \lambda^N \sum_0^{\infty} \lambda^{2Nk} d_{(x_{-1},y_{-1})}\mathbf{s}_N \cdots  d_{(x_{-k},y_{-k})}\mathbf{s}_N(P_x(e^u)),\]
with $(x_j,y_j,z_j,w_j):=f_N^j(m)$.

We remark that:
\[E^u(m):= \mathbb R \cdot ( \alpha_m , e^u),\]
is a differentiable line field on $M$, since $\|\lambda^{2N} d\mathbf{s}_N\|< \lambda^N/2 <1$ (and similarly for the derivatives). Moreover:
\[ \|\alpha_m-\lambda^N P_x(e^u)\|\le \lambda^{2N}\]
	and
\[d_m f_N (\alpha_m, e^u)=(d_{(x_0,y_0)} \mathbf{s}_N\alpha_m + \mu ^N P_x(e^u), \mu^{2N} e^u)=\mu^{2N}(\alpha_{f_N (m)}, e^u).\]
Indeed $\frac{d_{(x_0,y_0)} \mathbf{s}_N\alpha_m + \mu ^N P_x(e^u)}{\mu^{2N}}=\lambda^{2N}\left(d_{(x_0,y_0)} \mathbf{s}_N\alpha_m + \mu ^N P_x(e^u)\right)$ is equal to
\begin{equation*}
\begin{split}
&\lambda^N P_x(e^u) + \lambda^{2N} d_{(x_0,y_0)}\mathbf{s}_N  (\lambda^N \sum_0^{\infty} \lambda^{2Nk}d_{(x_{-1},y_{-1})}\mathbf{s}_N \cdots  d_{(x_{-k},y_{-k})}\mathbf{s}_N(P_x(e^u))\\
&=\lambda^N P_x(e^u) + \lambda^N \sum_0^{\infty} \lambda^{2N(k+1)} d_{(x_{0},y_{0})}\mathbf{s}_N \cdots d_{(x_{-k},y_{-k})}\mathbf{s}_N(P_x(e^u))\\
&=\alpha_{f_N(m)}.
\end{split}
\end{equation*}

\end{proof}

\begin{proof}[Proof of Corollary \ref{partialcoro}.]
If $v=(\alpha_m , e^u)$, then Proposition \ref{partialhyp} gives
\[\lam^{N}(\|P_x(e^u)\|-\lam^N)\leq {|v_x|}\leq\lam^{N}(\|P_x(e^u)\|+\lam^N)\]
\[1-\lam^{2N}-\lam^N \|P_x(e^u)\|\leq \norm{v}\leq 1+\lam^{2N}+\lam^N \|P_x(e^u)\|\]
which imply both inequalities. Note that $P_x(e^u)\neq 0$ since $A$ is a hyperbolic matrix.
\end{proof}

From now on we work with $N$\ for which the previous Proposition and Corollary hold.

\begin{remark}
Observe that the homological action $(\mathbf{s}_N)_{\ast}=\begin{pmatrix}
2 & -1\\
1 & 0
\end{pmatrix}\colon H_1(\mathbb T^2;\Real)\rightarrow H_1(\mathbb T^2;\Real)$\ of $\mathbf{s}_N$ is unipotent,
and so the one of $f_N$
is not hyperbolic. Hence by \cite{AnosovTori} the map $f_N$\ is not uniformly hyperbolic.
\end{remark}
Let us identify $E^c$ to $\mathbb R^2$. By equation \eqref{accentral}, we see that $d_{m}f_N|E^c:\mathbb{R}^2\rightarrow \mathbb{R}^2$\ coincides with $d_{(x,y)}\mathbf{s}_N$.  In what follows we will identify:
\[df_N|E^c=d\mathbf{s}_N.\]
 Note that $\norm{d^2\mathbf{s}_N}\leq N$, hence using \eqref{normadfcentro} we conclude

\begin{equation}\label{normacentro}
\frac{\norm{u}}{2N}\leq \norm{df_N(u)} \leq 2N\norm{u},\quad \forall u\in E^c,\quad \norm{d^2f_N}\leq N.
\end{equation}

In general the center bundle of a partially hyperbolic system is not integrable (see \cite{Nodyncoh}). However, in our case by construction $E^c$\ integrates to the smooth fibration by tori $(\Tor\times\{w\})_{w\in\Tor}$, hence by combining
Theorems 7.1 and 7.2 in  \cite{HPS} we obtain.
\begin{corollary}[Hirsch-Pugh-Shub]\label{dc}
There exists a neighborhood $U\subset Diff^2(M)$\ of $f_N$\ such that every $g\in U$\ is partially hyperbolic with center bundle $E^c_g$\  integrable to a continuous fibration by $\mathcal C^2$-tori depending continuously on the base point.
\end{corollary}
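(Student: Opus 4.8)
The plan is to realise the fibration $\mathcal{W}^c:=(\Tor\times\{w\})_{w\in\Tor}$ as a normally hyperbolic invariant foliation of $f_N$ and then to quote the Hirsch--Pugh--Shub persistence theory. First I would observe that $f_N$ is a bundle map over the Anosov diffeomorphism $A^{2N}$ of $\Tor$: the projection $\pi(x,y,z,w)=(z,w)$ satisfies $\pi\circ f_N=A^{2N}\circ\pi$, so $f_N$ carries each fibre $\Tor\times\{w\}$ diffeomorphically onto $\Tor\times\{A^{2N}w\}$; thus $\mathcal{W}^c$ is $f_N$-invariant with tangent bundle $T\mathcal{W}^c=E^c$.

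Next I would verify the two hypotheses needed to run HPS. \emph{(a) Partial hyperbolicity is $\mathcal{C}^1$-open.} This is the standard cone-field criterion (see \cite{PesinLect}): there is a $\mathcal{C}^1$-neighbourhood $U_0$ of $f_N$ such that every $g\in U_0$ is partially hyperbolic with a splitting $TM=E^u_g\oplus E^c_g\oplus E^s_g$ into bundles of the same dimensions $1,2,1$, varying continuously with $g$ and with the base point, with $E^\sigma_{f_N}=E^\sigma$. \emph{(b) $\mathcal{W}^c$ is $r$-normally hyperbolic once $N$ is large.} By \eqref{normacentro} the tangential rates along the fibres satisfy $\norm{df_N|E^c},\norm{df_N^{-1}|E^c}\le 2N$, whereas along $E^s$ and $E^u$ the contraction/expansion rates are $\lambda^{2N}$ and $\mu^{2N}=\lambda^{-2N}$ (the stable one coming from Lemma \ref{fmenosuno} and Proposition \ref{partialhyp}). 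Hence for any fixed $r$, as soon as $N$ is large enough that $\lambda^{2N}(2N)^r<1$, the foliation $\mathcal{W}^c$ is $r$-normally hyperbolic for $f_N$; in particular it is $2$-normally hyperbolic. Being a $\mathcal{C}^\infty$ fibration, $\mathcal{W}^c$ is moreover plaque expansive.

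With (a) and (b) in hand I would apply Theorems 7.1 and 7.2 of \cite{HPS}: after shrinking $U_0$ to a $\mathcal{C}^2$-neighbourhood $U$ of $f_N$, every $g\in U$ carries a unique $g$-invariant foliation $\mathcal{W}^c_g$ tangent to $E^c_g$, whose leaves are $\mathcal{C}^2$ submanifolds, $\mathcal{C}^1$-close leaf by leaf to those of $\mathcal{W}^c$, and depending continuously on $g$ and on the base point. Since each leaf of $\mathcal{W}^c$ is an embedded $2$-torus and each leaf of $\mathcal{W}^c_g$ is $\mathcal{C}^1$-close to one of them, every leaf of $\mathcal{W}^c_g$ is an embedded $\mathcal{C}^2$ $2$-torus; and because $\mathcal{W}^c_g$ is a foliation by compact leaves that is $\mathcal{C}^0$-close to a fibration — each leaf being a small graph over a fibre of $\pi$, as produced by the HPS graph transform — it is again a fibration, with $2$-torus leaf space and projection a $\mathcal{C}^2$ fibre bundle varying continuously with the base point. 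I expect the genuinely non-mechanical points to be the verification of plaque expansiveness and the passage from ``$\mathcal{C}^0$-close to a fibration'' to ``is a fibration''; both are soft consequences of compactness of the leaves, whereas the normal-hyperbolicity estimate itself is immediate from \eqref{normacentro}.
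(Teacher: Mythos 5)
Your proposal is correct and follows exactly the route the paper takes: observe that $E^c$ integrates to the $f_N$-invariant smooth fibration by tori $(\Tor\times\{w\})_w$, verify normal hyperbolicity of this foliation (your estimate $\lambda^{2N}(2N)^r<1$ is the relevant one), and invoke Theorems~7.1 and~7.2 of \cite{HPS} to conclude persistence of a $C^2$ center fibration for all nearby $g$. The paper states this in a single sentence; you have merely spelled out the normal-hyperbolicity rate check, plaque expansiveness, and the soft passage from ``$C^0$-close to a fibration by tori'' back to ``fibration by tori,'' all of which are the intended details.
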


\section{Central Exponents.}\label{Sect.3}

We will first prove non-uniform hyperbolicity of the map $f_N$, and later in Section $6$\ we will indicate the necessary modifications to our arguments for dealing with perturbations. To make the notation less cumbersome we write $f=f_N$.

In the unstable direction $f$\ has one positive Lyapunov exponent with respect to the Lebesgue measure. We are now seeking another positive exponent in the center direction. Let us introduce a few notions to perform this.

\begin{definition}A $u$-curve for $f$\ is  a curve $\ga=(\ga_x,\ga_y,\ga_y,\ga_w ):[0,2\pi] \rightarrow M$, tangent to $E^u$ and such that
$$\frac{d\ga}{dt}(t)=\frac{(\alpha_{\ga(t)},e^u)}{\lambda^N  \|P_x(e^u)\|}\quad \forall t\in [0,2\pi].$$
\end{definition}

By Corollary \ref{partialcoro}, $\|\frac{d\ga_x}{dt}(t)\|\in [1-N^{-1},1+ N^{-1}]$, and thus the curve $\ga$ makes approximately one turn of $M$ in the $x$ while making many more turns (on the order of $\frac1{\lam^N}$) around
the base torus $\{0\}\times\Tor$.

%By Proposition  \ref{partialhyp}, e

Every $u$-curve $\gamma$ is a segment of an unstable leaf. Moreover,
\begin{equation}\label{tiempo}
\frac{d (f^k\circ\ga)}{dt}(t) = \mu^{2kN} \frac{(\alpha_{f^k(\ga (t))},e^u)}{\lambda^N  \|P_x(e^u)\|}
%|\frac{d (f^k\circ\ga)_x}{dt}(t) |=\mu^{2kN}
, \quad \forall t\in[0,2\pi],\;\forall k\ge 0.
\end{equation}
%where $(f^k\circ\ga)_x\in \mathbb R/2\pi \mathbb Z$ is the first coordinate of the curve $f^k\circ\ga$.
 Hence, with $[\mu^{2kN}]$  the integer part of $\mu^{2kN}$, the curve $f^k\circ\ga$\ can be written as a concatenation
$$
f^k\circ\ga=\ga_1^k\ast\cdots\ga^k_{[\mu^{2kN}]}\ast \ga^k_{[\mu^{2kN}]+1}
$$
where $\ga_j^k,$ $j\in\{1,\ldots,  [\mu^{2kN}]\}$\ are $u$-curves and $\ga^k_{[\mu^{2kN}]+1}$\ is a segment of a $u$-curve.

To prepare the perturbative case, we look at $1/2$-H\"older vector fields on $u$-curves.

\begin{definition}
An \emph{adapted field} $(\ga,X)$ is the pair of
 a $u$-curve $\ga$ and of a unit vector field  $X$ satisfying:
\begin{enumerate}
\item $X$\ is tangent to the center direction.
\item $X$ is $(C_X,1/2)$-H\"older along $\ga$, meaning
      $$
      \forall m,m'\in \ga,\quad \norm{X_{m}-X_{m'}}\leq C_Xd(m,m')^{1/2}
      $$
      with $C_X<20 N^2\lam^{N}$, and where the distance $d(m,m')$ is measured along $\ga$.
\end{enumerate}
\end{definition}

From Proposition \ref{partialhyp}  one deduces that the length of a $u$-curve is bounded by $$\frac{2\pi (1+\lam^N) }{\lam^N\norm{P_x(e^u)}},$$ hence:
\begin{corollary}\label{variation} If $N$\ is sufficiently large then for every adapted field $(\gamma, X)$, for all $m,m'\in \gamma$:
\[\norm{X_{m}-X_{m'}}<\lam^{N/3}.\]
\end{corollary}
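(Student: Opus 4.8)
The plan is to combine the two ingredients recorded just before the statement: the H\"older bound $\norm{X_m-X_{m'}}\le C_X\,d(m,m')^{1/2}$ with $C_X<20N^2\lam^N$ coming from the definition of an adapted field, together with the bound on the length of a $u$-curve, which forces $d(m,m')$ to be exponentially small in $N$.

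First I would note that, since $m$ and $m'$ both lie on the $u$-curve $\ga$ and $d(m,m')$ is measured along $\ga$, we have $d(m,m')\le\mathrm{length}(\ga)\le \frac{2\pi(1+\lam^N)}{\lam^N\norm{P_x(e^u)}}$. Substituting this into the H\"older estimate yields
\[
\norm{X_m-X_{m'}}\ \le\ 20N^2\lam^N\left(\frac{2\pi(1+\lam^N)}{\lam^N\norm{P_x(e^u)}}\right)^{1/2}\ =\ 20N^2\sqrt{\frac{2\pi(1+\lam^N)}{\norm{P_x(e^u)}}}\;\lam^{N/2}.
\]
Because $e^u$ is a fixed unit eigenvector of the hyperbolic matrix $A$, the quantity $\norm{P_x(e^u)}$ is a fixed positive constant (its nonvanishing was already used at the end of the proof of Corollary \ref{partialcoro}), and $1+\lam^N\le 2$; hence the prefactor is bounded by $CN^2$ for some constant $C=C(A)$ independent of $N$.

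Then I would split $\lam^{N/2}=\lam^{N/3}\cdot\lam^{N/6}$ and observe that $CN^2\lam^{N/6}\to 0$ as $N\to\infty$, since $\lam<1$ makes the exponentially small factor beat the polynomial one. Thus there is $N_1$ such that $CN^2\lam^{N/6}<1$ for all $N\ge N_1$, which gives $\norm{X_m-X_{m'}}\le CN^2\lam^{N/6}\cdot\lam^{N/3}<\lam^{N/3}$, as desired.

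There is essentially no obstacle here beyond bookkeeping of constants; the only substantive point is to ensure $\norm{P_x(e^u)}$ does not degenerate, which is guaranteed by hyperbolicity of $A$. Conceptually the corollary just records that the extra factor $N^2$ in the H\"older seminorm is swamped by the $\lam^{N/2}$ gain coming from the shortness of $u$-curves, and the exponents $1/2$ and $1/3$ are chosen precisely to leave this slack.
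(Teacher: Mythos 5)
Your proof is correct and follows essentially the same route as the paper: plug the $u$-curve length bound $d(m,m')\le\frac{2\pi(1+\lam^N)}{\lam^N\norm{P_x(e^u)}}$ into the H\"older estimate with $C_X<20N^2\lam^N$, and observe that the resulting $O(N^2\lam^{N/2})$ is eventually smaller than $\lam^{N/3}$. Your write-up merely makes explicit the elementary comparison the paper leaves as a one-line inequality.
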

\begin{proof}
Indeed,
$$
\norm{X_{m}-X_{m'}}<20N^2\lam^N\left(\frac{2\pi (1+\lam^N) }{\lam^N\norm{P_x(e^u)}}\right)^{1/2}<\lam^{N/3}.
$$
\end{proof}

Fix an adapted field $(\ga,X)$\  and let $Y^k:=\frac{(f^k)_{\ast}X}{\norm{(f^k)_{\ast}X}}$. Denote by $d\gamma$\ the Lebesgue measure induced on\footnote{That is $d\gamma(t)=\norm{\frac{d\gamma}{dt}(t)} dt.$} $\gamma$\ and by $|\ga|$\ the length of $\ga$. We compute
\begin{equation*}
\begin{split}
I_n^{\ga,X} &:=
\frac1{|\gamma|}\int_{\ga} \log{\norm{d_mf^nX_m}}d\gamma\\
&=\sum_{k=0}^{n-1}\frac1{|\gamma|}\int_{\ga} \log{\norm{d_{f^km}f(Y^k\circ f^k(m))}}d\gamma\\
&=\sum_{k=0}^{n-1}\frac{1+\epsilon_k}{\mu^{2Nk}|\gamma|}\int_{f^k\circ\ga} \log{\norm{d_{m}f(Y^k)}}d(f^k\circ\gamma),\quad \epsilon_k \in [-2\lambda^{2N},2\lambda^{2N}]
\end{split}
\end{equation*}

where in the last equality we have used a change of variables together with the fact that $\norm{\frac{d(f^k \circ \ga)}{dt}(t)}/(\mu^{2Nk}\norm{\frac{d\ga}{dt}(t)})\in [1-2\lambda^{2N},1+2\lambda^{2N}]$ by Proposition \ref{partialhyp} and (\ref{tiempo}).

We conclude
\begin{multline}\label{poursct6}
I_n^{\ga,X}=\sum_{k=0}^{n-1}\frac{1+\epsilon_k}{\mu^{2Nk}|\ga|}\left(\sum_{j=0}^{[\mu^{2kN}]}\int_{\ga_j^k} \log{\norm{d_{m}f(Y^k)}}d\gamma_{j}^k\right.\\
\left.+\int_{\ga^k_{[\mu^{2kN}]+1}} \log{\norm{d_{m}f(Y^k)}}d\ga^k_{[\mu^{2kN}]+1}\right)
\end{multline}

A crucial point is that all pairs $(\ga_j^k,Y^k|\ga_j^k)$\ for $1\leq j\leq [\mu^{2Nk}]$\ are adapted fields.

\begin{lemma}\label{adapted} For $N$\ sufficiently large, for every adapted field $(\ga,X)$, for every $k\ge 1$, for every $1\leq j \leq [\mu^{2N}]$, the pair $(\ga^k_j,Y^k|\ga_j^k)$ is an adapted field.
\end{lemma}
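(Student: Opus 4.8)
The plan is to track two quantities along a $u$-curve under a single iterate of $f$: the position (which controls the distance $d(m,m')$ and hence the length of pieces), and the angle/direction of the transported vector field $Y^k = (f^k)_*X/\|(f^k)_*X\|$ restricted to a $u$-subcurve $\ga_j^k$. Since $(\ga,X)$ is adapted we already know from Corollary \ref{variation} that $X$ varies by at most $\lam^{N/3}$ along $\ga$, which is far smaller than the crude constant $C_X < 20N^2\lam^N$ allows; the point of the lemma is that this slack is exactly what lets the Hölder constant survive an iterate, because the strong unstable expansion by $\mu^{2N}$ contracts distances along the image curve by $\mu^{-2N} = \lam^{2N}$ (up to the factor $1+\epsilon_k$ from Proposition \ref{partialhyp}), while the center direction is distorted only polynomially in $N$ by \eqref{normacentro}.

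First I would verify condition (1): each $\ga_j^k$ is by construction (from the concatenation $f^k\circ\ga = \ga_1^k \ast \cdots \ast \ga_{[\mu^{2kN}]+1}^k$) a $u$-curve, and $Y^k$ is tangent to $E^c$ because $df$ preserves $E^c$ by \eqref{accentral}, so the normalized pushforward of a center vector stays in $E^c$. Condition (2) is the substance. I would estimate, for $m,m'\in\ga_j^k$, the quantity $\|Y^k_m - Y^k_{m'}\|$ by pulling back to $\ga$: writing $m = f^k(p)$, $m' = f^k(p')$ with $p,p'\in\ga$, one has $Y^k_m = d_pf^k X_p / \|d_pf^k X_p\|$ and similarly at $m'$. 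The distance $d(p,p')$ along $\ga$ is at most $\lam^{2N}(1+o(1))\cdot d(m,m')$ times a bounded factor coming from the length of a single $u$-curve versus the total length of $f^k\circ\ga$ — more precisely, since $\ga_j^k$ has length comparable to that of a $u$-curve and $f^k$ expands the unstable direction by $\mu^{2kN}$, the preimage of $\ga_j^k$ in $\ga$ has length $\asymp \mu^{-2kN}$, so $d(p,p') \le C\mu^{-2kN} \le C\lam^{2N}$ for $k\ge 1$. Then $\|X_p - X_{p'}\| \le C_X d(p,p')^{1/2} \le C_X (C\lam^{2N})^{1/2}$, and I must compare $\|d_pf^k X_p / \|\cdot\| - d_{p'}f^k X_{p'}/\|\cdot\|\|$ with $\|X_p - X_{p'}\|$: the derivative $d_pf^k$ restricted near $E^c$ distorts angles by a factor controlled by the ratio of operator norm to conorm of $d_mf|E^c$ over $k$ steps, which is at most $(2N)^{2k}$ by \eqref{normacentro}, but crucially $k$ appears here against $\mu^{-2kN}$ in the distance bound, so the product is dominated by $(2N)^{2k}\lam^{Nk}$, a geometric series summing to something $\le \lam^{N/2}$, say, for $N$ large. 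Assembling: $\|Y^k_m - Y^k_{m'}\| \le \text{(distortion)}\cdot C_X (C\lam^{2N})^{1/2} \le C_X \lam^N \cdot (\text{const})$; comparing with the required $C_X d(m,m')^{1/2}$ and using $d(m,m')\le |\ga_j^k|$ bounded below, I would show the new Hölder constant is again $< 20N^2\lam^N$ — in fact the gain from $\lam^N$ beats any polynomial loss in $N$.

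The main obstacle will be the angle-distortion estimate: controlling how much $p\mapsto d_pf^k|_{\text{(near }E^c)}$ can rotate the image of a center vector relative to the base distance traveled, since $d\mathbf{s}_N$ has norm up to $2N$ and is not conformal, and iterating $k$ times a priori costs $(2N)^{2k}$. The resolution is that this is weighed against the $\mu^{-2kN}$ shrinkage of the preimage curve, and the condition "$N$ sufficiently large" is precisely what makes $(2N)^2\lam^{2N} < 1$, so summing over $k\ge 1$ converges and stays tiny. One must also be careful that normalizing $Y^k$ (dividing by $\|(f^k)_*X\|$, which varies along the curve) does not amplify the Hölder constant — but this is handled by the elementary estimate $\|u/\|u\| - v/\|v\|\| \le 2\|u-v\|/\min(\|u\|,\|v\|)$ together with the lower bound on $\|d_pf^k X_p\|$ coming from the center-expansion lower bound $\|u\|/(2N)$ per step in \eqref{normacentro}, which over $k$ steps gives $(2N)^{-k}$ — again absorbed by $\mu^{-2kN}$. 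So the whole argument is a convergent-geometric-series estimate in which the strong unstable expansion rate $\mu^{2N}$ dominates the polynomial-in-$N$ center distortion.
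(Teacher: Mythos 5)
Your overall framing — exploit that strong-unstable expansion by $\mu^{2N}$ contracts base distances faster than the center direction can distort, so the H\"older bound survives push-forward — matches the mechanism of the paper, but your execution of it has two genuine gaps, and the second one is fatal for large $k$.

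First, you compute an \emph{absolute} bound on $\|Y^k_m-Y^k_{m'}\|$ rather than a bound proportional to $d(m,m')^{1/2}$: you replace $d(p,p')$ by the length of $f^{-k}(\gamma_j^k)$, which gives a uniform bound on the oscillation of $Y^k$ over $\gamma_j^k$, but not a H\"older bound. You then attempt to recover the H\"older form by ``using $d(m,m')\le |\gamma_j^k|$'', but that inequality points the wrong way: for $m,m'$ arbitrarily close the required right-hand side $C_Y\,d(m,m')^{1/2}$ tends to $0$, while your estimate does not scale down with $d(m,m')$. A small absolute oscillation never implies small H\"older constant.

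Second, even if you carry the $d(m,m')$-dependence faithfully, the ``pull back $k$ steps in one shot'' strategy does not close. You must split $d_pf^kX_p-d_{p'}f^kX_{p'}=d_pf^k(X_p-X_{p'})+(d_pf^k-d_{p'}f^k)X_{p'}$. The first part is the one you estimate, and there the accumulated factor $(2N)^{2k}$ (from $\|d_pf^k|E^c\|\le(2N)^k$ and the $(2N)^{-k}$ lower bound in the normalization) is indeed beaten by the $\lam^{Nk}$ coming from $d(p,p')^{1/2}\lesssim\lam^{Nk}d(m,m')^{1/2}$, giving a geometric factor $((2N)^2\lam^N)^k$. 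But you never estimate the second part, the variation of the linear map $p\mapsto d_pf^k|E^c$. Writing it as a telescoping sum over intermediate times, using $\|d^2f\|\le N$ and $d(f^ip,f^ip')\approx\mu^{2Ni}d(p,p')$, this term is of size $(2N)^{k-1}N\,\mu^{2N(k-1)}d(p,p')\approx(2N)^{k-1}N\lam^{2N}d(m,m')$, and after dividing by $\|d_pf^kX_p\|\gtrsim(2N)^{-k}$ one is left with $(2N)^{2k-1}N\lam^{2N}d(m,m')$. Here the $\mu^{2N(k-1)}$ and $\lam^{2Nk}$ cancel only to $\lam^{2N}$, not to $\lam^{2Nk}$, so the $(2N)^{2k-1}$ is uncompensated and blows up with $k$: the direct estimate cannot be uniform in $k$.

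The paper avoids this by proving the $k=1$ step only — showing $C_X<20N^2\lam^N$ implies $C_{Y^1}\le 8N^2(2+C_X)\lam^N(1+\lam^N)<20N^2\lam^N$ — and then iterating. Because $Y^k$ on each $\gamma_j^k$ is freshly renormalized to a unit field with the \emph{same} small H\"older constant before the next iterate is applied, the powers of $2N$ never accumulate: only one step's worth of $(2N)^2$ appears each time, against a fresh $\lam^N$. Your outline should be recast as that one-step self-improving (contraction-type) estimate plus induction, keeping the two triangle-inequality contributions (variation of $X$ vs.\ variation of $df$) explicitly separate, and the absolute-oscillation estimate should be dropped in favor of the genuine H\"older comparison $d(p,p')^{1/2}\le\lam^N(1+\lam^N)d(m,m')^{1/2}$.
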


\begin{proof}
The only non trivial assertion  is that $Y$\ is $(C_Y,1/2)$-H\"older with $C_Y<20N^2\lambda^{N}$. Let us do the proof for $k=1$. We write $Y:=Y^1$, and observe that
\begin{equation*}
\forall m,m'\in M,\quad\norm{d_mf(X_m) -d_{m}f(X_{m'})}\leq 2N\norm{X_m-X_{m'}}\le  2N C_X d(m,m')^{1/2}
\end{equation*}
\begin{equation*}
\forall m,m'\in M,\quad\norm{d_mf(X_{m'}) -d_{m'}f(X_{m'})}\leq Nd(m,m')
\end{equation*}
which is less than $4Nd(m,m')^{1/2}$ if $d(m,m')\leq 1$. If $d(m,m')>1$ we remark that:
\begin{equation*}
\norm{d_mf(X_{m'}) -d_{m'}f(X_{m'})}\leq 4N \le 4N d(m,m')^{1/2}.
\end{equation*}
 This gives, by the triangular inequality:
%\begin{equation*}
%\forall m,m'\in M,\quad\norm{d_mf(X_m) -d_{m}f(X_{m'})}\leq (4N+ 2N C_X )d(m,m')^{1/2}
%\end{equation*}
%
%
% By (both parts of) equation \eqref{normadfcentro} and the fact that
%the diameter of $M$ is less than 4 we obtain
%\begin{equation*}
%\forall m,m'\in M,\quad\norm{d_mf(X_m) -d_{m'}f(X_m)}\leq Nd(m,m')\leq 4Nd(m,m')^{1/2}
%\end{equation*}
%\begin{equation*}
%\forall m,m'\in M,\quad  \norm{d_{m} f(X_m)},\norm{d_{m'}f(X_m)}<2N
%\end{equation*}
%This implies that:
\begin{equation}\label{holder}
\forall m,m'\in M,\quad\norm{d_mf(X_m) -d_{m'}f(X_{m'})}\leq (4N+2NC_X)d(m,m')^{1/2}.
\end{equation}
For $m,m'\in \ga^1_j$, we compute using the triangular inequality and the previous equation:\
\begin{equation*}
\begin{split}
\norm{Y_m-Y_{m'}}&=\frac{1}{\norm{f_{\ast}X_m}\norm{f_{\ast}X_{m'}}}\Big\|\norm{f_{\ast}X_{m'}}f_{\ast}X_m -\norm{f_{\ast}X_m}f_{\ast}X_{m'}\Big\|\\
&\leq\frac{1}{\norm{f_{\ast}X_m}\norm{f_{\ast}X_{m'}}}\left(\Big\|\norm{f_{\ast}X_{m'}}f_{\ast}X_m-\norm{f_{\ast}X_{m'}}f_{\ast}X_{m'}\Big\|\right.\\
&+\left.\hspace{3.4cm}\Big\|\norm{f_{\ast}X_{m'}}f_{\ast}X_{m'}-\norm{f_{\ast}X_m}f_{\ast}X_{m'}\Big\|\right)\\
&\leq\frac{2}{\norm{f_{\ast}X_m}}\norm{f_{\ast}X_m-f_{\ast}X_{m'}}\\
&=\frac{2}{\norm{f_{\ast}X_m}}\norm{d_{f^{-1}m}f(X_{f^{-1}m})-d_{f^{-1}m'}f(X_{f^{-1}m'})}\\
%&\leq \frac{2}{\norm{f_{\ast}X_m}}\left(\norm{df|_{E^c}}C_Xd(f^{-1}m,f^{-1}m')^{1/2}\right.\\
\end{split}
\end{equation*}
Hence by (\ref{holder}) and Proposition \ref{partialhyp}, it comes
\[\norm{Y_m-Y_{m'}}
\leq \frac{2(4N+2NC_X)}{\norm{f_{\ast}X_m}}d(f^{-1}m,f^{-1}m')^{1/2}
\leq \frac{4N(2+C_X)\lambda^N(1+\lambda^N)}{\norm{f_{\ast}X_m}}d(m,m')^{1/2}.\]
%&\left.\vphantom{\frac{2}{\norm{f_{\ast}X_m}}}\hspace{2cm}+\norm{d_{f^{-1}m}f(X_{f^{-1}m'}) -d_{f^{-1}m'}f(X_{f^{-1}m'})}\right)
%\end{split}
%\end{equation*}
Since $\norm{df|_{E^c}}\geq \frac1{2N}$, we finally get
$$\norm{Y_m-Y_{m'}}\leq 8N^2(2+C_X)\lam^N(1+\lambda^N) d(m,m')^{1/2}<20N^2\lambda^{N} d(m,m')^{1/2}
$$
if $N$\ is sufficiently large. The general case $k>1$ follows by induction.
\end{proof}

Hence we have constructed a dynamics on the subset of adapted fields.

Let $\ga$\ be a u-curve and $X$\ a vector field tangent to the center direction.
The following Proposition is fundamental.
\begin{proposition}\label{ergo}
Suppose that there exists $C>0$ with the following property: for every $u$-curve $\ga$ there exists a vector field $X$  such that $(\gamma, X)$ is an adapted field and such that for all $n\geq 0$ large\
$$
\frac{I_n^{\ga,X}}{n}>C.
$$
Then the map $f$\ has a positive Lyapunov exponent greater than $C/2$ in the center direction at Lebesgue almost every point.
\end{proposition}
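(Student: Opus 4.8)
The plan is to deduce a lower bound for the center Lyapunov exponent at Lebesgue a.e. point from the integral estimate $I_n^{\ga,X}/n>C$, using a Fubini-type argument along the unstable foliation together with the fact that the unstable holonomy expands the parameter of a $u$-curve at the explicit rate $\mu^{2N}$ per iterate. First I would fix a reference $u$-curve $\ga$ and the adapted field $(\ga,X)$ supplied by the hypothesis, and recall that by \eqref{tiempo} the pushed-forward curve $f^n\circ\ga$ is, up to a controlled bounded factor, the curve $\ga$ reparametrized with speed multiplied by $\mu^{2Nn}$. Thus for Lebesgue-a.e. parameter $t\in[0,2\pi]$ we want to show
\[
\liminf_{n\to\infty}\frac1n\log\norm{d_{\ga(t)}f^nX_{\ga(t)}}\ \ge\ C/2,
\]
and then transport this from the curve to a full-measure subset of $M$.

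The key step is a maximal/Borel–Cantelli argument on the curve. Writing $\phi_n(t):=\frac1n\log\norm{d_{\ga(t)}f^nX_{\ga(t)}}$, the hypothesis says $\frac1{|\ga|}\int_\ga \phi_n\,d\ga>C$ for all large $n$. One has the uniform two-sided bound $|\log\norm{d_mf|_{E^c}}|\le\log(2N)$ from \eqref{normacentro}, so $\phi_n$ is bounded by $\log(2N)$ in absolute value; combined with the (sub)additive-type relation coming from the cocycle over $f$, the functions $\phi_n$ cannot stay much below $C$ on a set of definite measure for infinitely many $n$. More precisely, consider the "bad" set $B_n=\{t:\phi_n(t)<C/2\}$; since $\int\phi_n>C|\ga|$ and $\phi_n\le\log 2N$, we get $|B_n|\le |\ga|\,(\log 2N-C)/(\log 2N-C/2)<\theta|\ga|$ for some $\theta<1$ independent of $n$. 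This alone is not summable, so to upgrade "positive density of good times" to "good for a.e.\ $t$ eventually", I would instead run the argument along the dynamics: apply the hypothesis not to $\ga$ but to each of the $u$-curves $\ga_j^k$ appearing in the decomposition of $f^k\circ\ga$ (which are adapted fields by Lemma~\ref{adapted}), use \eqref{poursct6} to write $I_n^{\ga,X}$ as an average of the corresponding quantities over these sub-curves, and conclude by a Birkhoff-type / Borel–Cantelli estimate that the set of $t$ for which $\frac1n\log\norm{d_{\ga(t)}f^n X}$ fails to exceed $C/2$ infinitely often has zero $d\ga$-measure. The factor $1/2$ is exactly the loss one absorbs in passing from the Cesàro average being $>C$ to the pointwise $\liminf$ being $\ge C/2$.

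Finally I would globalize: the unstable foliation $\mathcal F^u$ is absolutely continuous (standard for partially hyperbolic systems — it is the strong unstable foliation here, with $C^r$ leaves by the Stable Manifold Theorem), and $u$-curves are uniform-size plaques of its leaves covering $M$. Applying the curve-wise statement to a measurable family of $u$-curves foliating a positive-measure (indeed full-measure) region, and integrating transversally via the absolute continuity of $\mathcal F^u$, yields that Lebesgue-a.e.\ $m\in M$ has $\liminf_n \frac1n\log\norm{d_mf^n X_m}\ge C/2$. Since $X_m$ is a unit center vector and, by domination, the center exponent of $f$ at $m$ is at least $\liminf_n\frac1n\log\norm{d_mf^n v}$ for any nonzero $v$ with a nonvanishing center component — and $E^c$ is $df$-invariant so $d_mf^n X_m$ stays in $E^c_{f^n m}$ — Oseledec's theorem gives a center Lyapunov exponent $>C/2$ at a.e.\ point. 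The main obstacle is the second step: turning the $L^1$/Cesàro lower bound $I_n^{\ga,X}/n>C$ into an a.e.\ pointwise $\liminf$ bound on the curve, i.e.\ controlling the oscillation of $t\mapsto\log\norm{d_{\ga(t)}f^nX}$; the decomposition \eqref{poursct6} together with the adaptedness (Lemma~\ref{adapted}) of all the pieces $\ga_j^k$ is precisely what makes this a self-improving, dynamical Borel–Cantelli estimate rather than a one-shot Markov inequality, and getting the quantifiers right there is the delicate point.
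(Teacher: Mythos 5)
Your plan is to go directly: show that for $d\ga$-a.e.\ parameter $t$ on a $u$-curve the pointwise $\liminf$ of $\frac1n\log\norm{d_{\ga(t)}f^nX}$ is at least $C/2$, then globalize by absolute continuity. The paper instead argues by contradiction with a Lebesgue density point, and the difference is not cosmetic: your proposal has a genuine gap exactly at the step you yourself flag as ``the delicate point.''

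The $L^1$/Ces\`aro lower bound $\frac1n I_n^{\ga,X}>C$, combined with the uniform two-sided bound $|\phi_n|\le\log 2N$, only gives (via Markov) that $\{t:\phi_n(t)<C/2\}$ has $d\ga$-fraction at most $\theta:=(\log 2N-C)/(\log 2N-C/2)$, which is a fixed number strictly between $0$ and $1$ (indeed close to $1$ since $C\ll\log 2N$). Passing to the a.e.\ limit $\phi(t)=\lim_n\phi_n(t)$ (which exists a.e.\ by Oseledec) and using dominated convergence improves nothing: you still only get $|\{\phi<C/2\}|<\theta|\ga|$, not measure zero. Redistributing the estimate over the sub-curves $\ga_j^k$ via \eqref{poursct6} and Lemma~\ref{adapted} does not change this, because the bound you have is on a spatial average over the curve for each $n$, not a pointwise cocycle estimate; there is no summability and hence no Borel--Cantelli, and no invariant transformation on $[0,2\pi]$ over which to run a Birkhoff argument. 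As written, the argument stalls at ``a definite fraction of parameters are good for each $n$.''

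The missing idea is the one the paper uses: work by contradiction and invoke the Lebesgue density theorem to push $\theta$ to $0$. Concretely, let $B$ be the set of regular points whose maximal center exponent is $\le C/2$, suppose $Leb(B)>0$, and by absolute continuity of $\mathcal F^u$ plus Fubini find an unstable leaf meeting $B$ in positive $1$-dimensional measure with a density point $b$. Choose $\epsilon=2\pi\lambda^{2Nk}$ so that the unstable arc $\ga^{\epsilon}$ through $b$ has $Leb(\ga^{\epsilon}\cap B^c)<\frac{C}{4\log 2N}Leb(\ga^{\epsilon})$ and, crucially, so that $f^k\ga^{\epsilon}$ is a full $u$-curve (this is where the explicit rate $\lambda^{2Nk}$ of $f^{-k}$ on $E^u$ enters). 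Apply the hypothesis at the $u$-curve $f^k\ga^{\epsilon}$ to get $X_{\epsilon}$, set $\chi(m)=\limsup_n\frac1n\log\norm{d_mf^n(X_{\epsilon})}$, and compare two estimates for $\int_{\ga^{\epsilon}}\chi\,d\ga^{\epsilon}$: a change of variables plus reverse Fatou (justified by $\chi\le\log 2N$) gives $\ge C(1+O(\lambda^{2N}))Leb(\ga^{\epsilon})$, while splitting over $B$ and $B^c$ and using $\chi<C/2$ on $B$ (Oseledec regularity) plus the density choice of $\epsilon$ gives $<\frac34 C\,Leb(\ga^{\epsilon})$. This is the contradiction. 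Your proposal identifies the right raw materials (the decomposition, the uniform bounds, absolute continuity of $\mathcal F^u$) but lacks this density-point amplification, without which the Markov inequality is too weak.
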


\begin{proof}
Lebesgue almost every point of $M$ is regular, i.e. their Lyapunov exponents are well defined.
Let $B$ be the subset of regular points at which the map $f$\ does not have a positive Lyapunov exponent greater than $C/2$ in the center direction.
For the sake of contradiction, assume that $B$\ has positive Lebesgue measure.
 By absolute continuity of the (strong) unstable foliation (see Chapter 7 in \cite{PesinLect}) and Fubini's Theorem, there exists an unstable manifold $\gamma$ which intersects $B$ at a subset of positive $1$-Lebesgue measure. Let $b$ be a density point of this subset of $\gamma$.
%Consider the set $B$\ of Lyapunov regular points for which the map $f$\ does not have a positive Lyapunov exponent greater than $C/2$ in the center direction, and let us assume for the sake of contradiction that $B$\ has positive measure. By absolute continuity of the center foliation (see Chapter 7 in \cite{PesinLect}), we can find $b\in B$ which is simultaneously density
%point of $B$ and of the unstable leaf where is contained.
 For $\epsilon$\ small let $\ga^{\epsilon}=(\ga_x^{\epsilon},\ga_y^{\epsilon},\ga_y^{\epsilon},\ga_w^{\epsilon} ):[-\epsilon,\epsilon]\rightarrow M$\ be the curve tangent to $E^u_f$ such that $\frac{d\ga}{dt}(t)=\frac{(\alpha_{\ga(t)},e^u)}{\lambda^N  \|P_x(e^u)\|}\ \forall t$\ and $\ga^{\epsilon}(0)=b$, and denote by $Leb$\ the Lebesgue measure on its image.

%By absolute continuity of the unstable foliation (see \cite{PesinLect}), we can assume that
Then
$$
\frac{Leb(\ga^{\ep}\cap B)}{Leb(\ga^{\ep})}\xrightarrow[\ep\rightarrow 0]{} 1.
$$

Take $\ep=2\pi{\lam^{2Nk}}$\ with $k$ large so that $Leb(\ga^{\ep}\cap B^c)<\frac{C}{4\log{2N}}Leb(\ga^{\ep})$: then the curve $f^k\ga^{\ep}$\ is a $u$-curve (by Prop. \ref{partialhyp}). Consider $X_{\ep}$\ so that $(f^k\ga^{\ep},f^k_{\ast}X_{\ep})$\ satisfies the Proposition hypothesis  and let $\chi(m)=\limsup_{n\rightarrow\infty}\frac{\log{\norm{d_mf^n(X_{\ep})}}}{n}$. Observe that $\chi(m)<\frac{C}{2}$ if $m\in B$. By hypothesis, we obtain
\begin{equation*}
\begin{split}
\int_{\ga^{\ep}}\chi  d\ga^{\ep}&=\int_{f^k\ga^{\ep}}\chi\circ f^{-k} \frac{1}{\norm{\frac{df^k\ga^{\ep}}{dt}}}d(f^k\ga^{\ep})=\frac{1+\epsilon_k}{\mu^{2Nk}}\int_{f^k\ga^{\ep}}\chi\circ f^{-k} d(f^k\ga^{\ep})\\
&\ge \lam^{2Nk}(1+\epsilon_k)\limsup_{n\rightarrow\infty} \int_{f^k\ga^{\ep}} \frac{\log{\norm{d_mf^n(    f^k_{\ast} X_{\ep})}}}{n} d(f^k\ga^{\ep}) \ge C(1+\epsilon_k)|f^k\ga^{\ep}|\lam^{2Nk}\\
&\ge C (1+\epsilon'_k) Leb(\ga^{\ep}),
\end{split}
\end{equation*}
with $|\epsilon_k|,|\epsilon'_k|\le 2\lambda^{2N}$.
On the other hand, by equation \eqref{normacentro}, $\chi\leq \log{2N}$\ and thus
\begin{equation*}
\begin{split}
\int_{\ga^{\ep}}\chi d\ga^{\ep}&=\int_{\ga^{\ep}\cap B}\chi dLeb+\int_{\ga^{\ep}\cap B^c}\chi dLeb \\
&\leq \frac C{2} Leb(\ga^{\ep}) + \log{2N}Leb(\ga^{\ep}\cap B^c)<\frac34 C Leb(\ga^{\ep})
\end{split}
\end{equation*}
which gives the contradiction.
\end{proof}

\begin{remark}
In  the previous Proposition, as in Proposition \ref{ergog}, it is enough to have the inequality for almost every $u$-curve. The proof is the same.
\end{remark}

We are led to study the value of
$$
E(\ga,X):=\frac1{|\gamma|}\int_{\ga} \log \norm{d_mf(X)}d\ga
$$
for  adapted fields $(\ga,X)$.

\section{The basis in the Central Direction}\label{Sect.4}
The study of $E(\ga,X)$\ will be achieved by introducing a convenient basis of $E^c$. For $m=(x,y,z,w)\in M$\ define
\[\Omega(m)=2+N\cos  x.\]
One verifies directly that $d_mf|E^c$\ is represented by the matrix
$$
d_mf|E^c=\begin{pmatrix}
           \Omega(m)& -1\\
           1& 0\\
         \end{pmatrix}
$$
We consider the orthogonal basis of $E^c_m$\ given by
$$
s_m=(1,\Omega(m));\quad u_m=(\Omega(m),-1)
$$
and verify that
\begin{equation}
d_mf(s_m)=(0,1),\quad d_mf(u_m)=(1+\Omega^2(m),\Omega(m)).
\end{equation}
Now if $X$\ is a unit vector field  tangent to $E^c$\ we can write
\begin{equation}
X_m=\frac{\cos{\theta^X(m)}}{\sqrt{1+\Omega^2(m)}}s_m+\frac{\sin{\theta^X(m)}}{\sqrt{1+\Omega^2(m)}}u_m,
\end{equation}
where $\theta^X(m)$ is the angle $\angle(X_m,s_m)$. Hence
\begin{equation}
d_mf(X_m)=\left(\sin \theta^X(m)\sqrt{1+\Omega^2(m)},\frac{\cos \theta^X(m)+\sin \theta^X(m) \Omega(m)}{\sqrt{1+\Omega^2(m)}}\right)
\end{equation}
which in turn implies
\begin{equation}\label{normdif}
\begin{split}
\norm{d_mf(X_m)} &\geq |\sin{\theta^X(m)}|\sqrt{1+\Omega^2(m)}\\
&\geq |\sin{\theta^X(m)}||2+N\cos{ x}|\\
&\geq N|\sin{\theta^X(m)}||\cos{ x}|-2.
\end{split}
\end{equation}
\begin{definition}
 For $0\leq a<b\leq 1$, \emph{a strip} $S[a,b]$\ is a region of the form
\begin{equation*}
S[a,b]=\{ m=(x,y,z,w)\in M:x\in[a,b]\}.
\end{equation*}
The \emph{length} of the strip $S[a,b]$\ is $l(S[a,b])=b-a$.

By a harmless abuse of language we also call strips to the union of two strips, and extend the concept of length accordingly.

The \emph{critical strip} is the strip
$$
\mathit{Crit}=S[b_1,b_2]\cup S[b_3,b_4]
$$
where $0<b_1<\pi/2<b_2<b_3<3\pi/2<b_4<2\pi$\ are such that
\begin{equation*}
\cos{ b_1}=\cos{ b_4}=\frac{1}{\sqrt{N}}\qquad \cos{ b_2}=\cos{ b_3}=-\frac{1}{\sqrt{N}}.
\end{equation*}
\end{definition}

One verifies that
\begin{equation}\label{positionbs}
\begin{split}
&-\frac{2}{\sqrt{N}}<b_1-\frac\pi2<-\frac{1}{\sqrt{N}},\quad \frac{1}{\sqrt{N}}<b_2-\frac\pi2<\frac{2}{\sqrt{N}}\\
&-\frac{2}{\sqrt{N}}<b_3-\frac{3\pi}2<-\frac{1}{\sqrt{N}},\quad \frac{1}{\sqrt{N}}<b_4-\frac{3\pi}2<\frac{2}{\sqrt{N}}.
\end{split}
\end{equation}

In particular, $l(\mathit{Crit})\leq \frac{8}{\sqrt{N}}$.
\begin{lemma}\label{Xnobad}
Let $X$ be a vector field tangent to the center bundle. For $m$\ outside the critical strip we have
$$
|\Omega(m)|>\sqrt N-2,\quad \mathrm{and}\quad
\norm{df(X_m)}\geq \sqrt{N}|\sin{\theta^X(m)}|-2.
$$
\end{lemma}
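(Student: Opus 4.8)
The plan is to prove Lemma~\ref{Xnobad} directly from the definitions, since both assertions are essentially algebraic consequences of the fact that we are outside the critical strip. First I would unwind what it means for $m=(x,y,z,w)$ to lie outside $\mathit{Crit}=S[b_1,b_2]\cup S[b_3,b_4]$: by the definition of the $b_i$ via $\cos b_1=\cos b_4=1/\sqrt N$ and $\cos b_2=\cos b_3=-1/\sqrt N$, being outside the critical strip forces $\abs{\cos x}\ge 1/\sqrt N$. (One should check the geometry of the four values $0<b_1<\pi/2<b_2<b_3<3\pi/2<b_4<2\pi$: on $[0,b_1]$ and $[b_4,2\pi]$ one has $\cos x\ge 1/\sqrt N$; on $[b_2,b_3]$ one has $\cos x\le -1/\sqrt N$; the excluded intervals $(b_1,b_2)$ and $(b_3,b_4)$ are exactly where $\abs{\cos x}<1/\sqrt N$.)

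Granting $\abs{\cos x}\ge 1/\sqrt N$, the first inequality is immediate: $\abs{\Omega(m)}=\abs{2+N\cos x}\ge N\abs{\cos x}-2\ge N\cdot\frac{1}{\sqrt N}-2=\sqrt N-2$. For the second inequality I would simply invoke the already-established bound \eqref{normdif}, namely $\norm{d_mf(X_m)}\ge N\abs{\sin\theta^X(m)}\abs{\cos x}-2$, and substitute $\abs{\cos x}\ge 1/\sqrt N$ to get $\norm{d_mf(X_m)}\ge N\abs{\sin\theta^X(m)}\cdot\frac{1}{\sqrt N}-2=\sqrt N\,\abs{\sin\theta^X(m)}-2$. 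Here $\theta^X(m)=\angle(X_m,s_m)$ as introduced before \eqref{normdif}, and the estimate holds for any unit vector field $X$ tangent to $E^c$.

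The only genuinely substantive point — and the one I would be most careful about — is the elementary trigonometric bookkeeping that outside the two excluded intervals the cosine is bounded away from zero by $1/\sqrt N$ with the correct sign in each region; everything after that is a one-line substitution into \eqref{normdif}. It is worth noting that the lemma does not even require $m$ to avoid \emph{both} components of $\mathit{Crit}$ in a subtle way: the single scalar conclusion $\abs{\cos x}\ge 1/\sqrt N$ already encodes membership outside $\mathit{Crit}$, and both displayed inequalities follow from it verbatim. I would therefore present the proof as: (1) outside $\mathit{Crit}$ $\Rightarrow$ $\abs{\cos x}\ge 1/\sqrt N$; (2) hence $\abs{\Omega(m)}=\abs{2+N\cos x}\ge \sqrt N-2$; (3) plug $\abs{\cos x}\ge 1/\sqrt N$ into \eqref{normdif} to obtain $\norm{df(X_m)}\ge\sqrt N\abs{\sin\theta^X(m)}-2$.
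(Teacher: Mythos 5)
Your proposal is correct and follows essentially the same route as the paper, which simply notes that $m\in\mathit{Crit}^c$ forces $|N\cos x|\geq\sqrt N$ and then appeals to \eqref{normdif}; you have merely spelled out the trigonometric bookkeeping and the substitution in more detail.
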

\begin{proof}
If $m\in \mathit{Crit}^c$\ then
$|N\cos{x}|\geq \sqrt{N}$ and the claim follows from equation \eqref{normdif}.
\end{proof}
Let us define the following cone:
$$\Delta:=\mathbb R\cdot \{(1,n):\; |n|\le \sqrt[4]  N\}\subset\Real^2.$$
\begin{definition}
An adapted field $(\gamma, X)$\ is called \emph{good} if
$$
\forall m\in \gamma,\quad X_m\in \Delta,
$$
otherwise it is called \emph{bad}.
\end{definition}

This Manicheistic dichotomy is suitable to evaluate the  expectation of vector growth.

\begin{proposition}\label{Bounds}
For $N$\ sufficiently large, for every  adapted field $(\ga,X)$, it holds:
\begin{enumerate}
\item $E(\ga,X)\geq -\log{2N}.$
\item  Furthermore, if $(\ga,X)$\ is good then $E(\ga,X)\geq \frac{1}{7}\log{N}.$
\end{enumerate}
\end{proposition}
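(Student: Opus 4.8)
The plan is to estimate $E(\ga,X)=\frac1{|\ga|}\int_\ga \log\norm{d_mf(X_m)}\,d\ga$ by splitting the $u$-curve $\ga$ according to whether the base point $m$ lies inside or outside the critical strip $\mathit{Crit}$, and, for the good case, further exploiting that the angle $\theta^X(m)$ cannot be too small on most of $\ga$. For the universal lower bound (1): outside $\mathit{Crit}$ we have from Lemma \ref{Xnobad} that $\norm{d_mf(X_m)}\ge \sqrt N|\sin\theta^X(m)|-2$, which is $\ge -1$ trivially, and in general from \eqref{normacentro} we always have $\norm{d_mf(X_m)}\ge \frac1{2N}$, so $\log\norm{d_mf(X_m)}\ge -\log 2N$ pointwise; averaging gives (1) immediately. (One does need that $X$ is a unit vector so that $\norm{d_mf(X_m)}\ge \frac1{2N}\norm{X_m}=\frac1{2N}$.)

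The substance is (2). First I would show that for a good adapted field the angle $\theta^X(m)$ stays bounded away from $0$ and $\pi$ on $\ga$. Indeed $X_m\in\Delta$ means $X_m=\mathbb R\cdot(1,n)$ with $|n|\le N^{1/4}$; writing $X_m$ in the basis $\{s_m,u_m\}$ one computes $\tan\theta^X(m)$ (equivalently $\cot$) as an explicit function of $n$ and $\Omega(m)$, and since $|\Omega(m)|\le 2+N$, the ratio $\sin\theta^X(m)$ is bounded below by something like $c/\sqrt N$ in absolute value — roughly, a vector in the narrow cone $\Delta$ is almost never parallel to $s_m=(1,\Omega(m))$ because $\Omega$ is large. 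Combined with Lemma \ref{Xnobad}, on $\ga\setminus\mathit{Crit}$ this yields $\norm{d_mf(X_m)}\ge \sqrt N\cdot\frac{c}{\sqrt N}-2\ge c'$, but we want a genuine $\gtrsim\sqrt N$ lower bound, so I would instead argue that away from the critical strip \emph{and} away from a further small neighborhood where $\theta^X$ could be small, $|\sin\theta^X(m)|$ is actually of order $1$ (not just $1/\sqrt N$), giving $\norm{d_mf(X_m)}\gtrsim \sqrt N$ and hence $\log\norm{d_mf(X_m)}\ge \frac12\log N - O(1)$ there. Then I estimate the measure of the bad set along $\ga$: the critical strip has length $\le 8/\sqrt N$ in $x$, and since a $u$-curve has $\|\tfrac{d\ga_x}{dt}\|\in[1-N^{-1},1+N^{-1}]$ by Corollary \ref{partialcoro}, the portion of $\ga$ over $\mathit{Crit}$ has relative length $O(1/\sqrt N)$; on that portion we only use the universal bound $\log\norm{d_mf(X_m)}\ge -\log 2N$. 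Balancing, $E(\ga,X)\ge (1-O(N^{-1/2}))(\tfrac12\log N - O(1)) - O(N^{-1/2})\log 2N \ge \tfrac17\log N$ for $N$ large.

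The main obstacle I anticipate is controlling where $\theta^X(m)$ is small along $\ga$ — that is, ruling out (or confining to a small set) the points where $X_m$ is nearly parallel to the contracting-type direction $s_m$, which is where $d_mf$ does not expand. For a good field the cone condition $X_m\in\Delta$ should force $\theta^X$ away from $0,\pi$ \emph{everywhere}, because $s_m$ has slope $\Omega(m)$ which is either large ($|\Omega|\ge\sqrt N-2$ outside $\mathit{Crit}$) or moderate but still bounded, whereas vectors in $\Delta$ have slope at most $N^{1/4}$; so $X_m$ parallel to $s_m$ would need $\Omega(m)=O(N^{1/4})$, i.e. $m\in\mathit{Crit}$ (up to constants). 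Thus outside $\mathit{Crit}$ one gets $|\sin\theta^X(m)|\ge 1-O(N^{-1/4})$, comfortably of order $1$, and the estimate closes. I would double-check the elementary trigonometric inequality relating $\theta^X$, $n$, and $\Omega(m)$, and the exact threshold in the definition of $\mathit{Crit}$ (chosen so that $|N\cos x|\ge\sqrt N$ outside it), since the constant $\frac17$ is clearly slack and leaves room for the $O(1)$ losses.
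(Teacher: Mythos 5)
Your treatment of part (1) is fine and matches the paper (pointwise bound $\norm{d_mf|_{E^c}}\ge \frac1{2N}$, i.e.\ $\norm{df^{-1}|_{E^c}}\le 2N$).

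For part (2) there is a genuine error in the central angle estimate. You claim that for a good field, outside $\mathit{Crit}$ (and a ``further small neighborhood'') one has $|\sin\theta^X(m)|\ge 1-O(N^{-1/4})$, i.e.\ $\theta^X$ is close to $\pi/2$. That is false. Consider $X_m$ parallel to $(1,n)$ with $n$ at the edge of the cone, $|n|\approx N^{1/4}$, and $m$ just outside $\mathit{Crit}$, so $|\Omega(m)|\approx\sqrt N$. After normalization $X_m\approx(N^{-1/4},1)$ and $s_m/\norm{s_m}\approx(N^{-1/2},1)$ are \emph{both} close to the $(0,1)$-direction, and
\[
|\sin\theta^X(m)|=\frac{|\Omega(m)-n|}{\sqrt{1+n^2}\,\sqrt{1+\Omega(m)^2}}\approx\frac{\sqrt N}{N^{1/4}\cdot\sqrt N}=N^{-1/4},
\]
not order $1$. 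Worse, Corollary~\ref{variation} says $X$ varies by less than $\lam^{N/3}$ along a $u$-curve, so $X$ is essentially constant on $\gamma$; if $X$ happens to sit near the boundary of $\Delta$, then $|\sin\theta^X|=O(N^{-1/4})$ on essentially \emph{all} of $\gamma\setminus\mathit{Crit}$, and there is no small exceptional set to discard. Your fallback reading (``ruling out or confining to a small set the points where $X_m$ is nearly parallel to $s_m$'') therefore cannot be implemented.

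What does work — and is what the paper proves as Lemma~\ref{Xgood} — is the weaker uniform bound $|\sin\theta^X(m)|\ge N^{-1/3}$ on $\gamma\setminus\mathit{Crit}$ for good fields (the sharp exponent is $1/4$, but $1/3$ is simpler and suffices). Combined with Lemma~\ref{Xnobad} this yields $\norm{d_mf(X_m)}\ge N^{1/6}-2$ off $\mathit{Crit}$, so the coefficient in front of $\log N$ drops from your claimed $1/2$ to roughly $1/6$; after losing $O(N^{-1/2}\log 2N)$ from the critical strip (whose relative length along $\gamma$ is $O(N^{-1/2})$ by Corollary~\ref{partialcoro}, as you correctly note) one still clears $\frac17\log N$. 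So your decomposition-by-strips skeleton and the final bookkeeping are right, but the key pointwise angle bound must be the polynomially small one, not order one, and the exponent $1/6$ (or at best $1/4$) has to be carried through to the end.
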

The following Lemma will be useful to prove the above proposition:
\begin{lemma}\label{Xgood}
For $N$ sufficiently large, for every  good adapted field $(\gamma, X)$, it holds:
\[|\sin{\theta^X(m)}|\geq \frac{1}{\sqrt[3]{N}}\quad \forall m\notin \mathit{Crit}.\]
\end{lemma}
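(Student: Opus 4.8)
The plan is to exploit the explicit orthogonal basis $\{s_m,u_m\}$ of $E^c_m$ introduced above together with the two quantitative constraints that are available: being \emph{good} forces $X_m$ into the narrow cone $\Delta$ clustered around the horizontal direction $(1,0)$, while being outside $\mathit{Crit}$ forces $s_m=(1,\Omega(m))$ to be nearly vertical, since $|\Omega(m)|\ge\sqrt N-2$ by Lemma \ref{Xnobad}. Thus $X_m$ and $s_m$ point in quantitatively transverse directions, and the lemma is just the translation of this transversality into a lower bound for $|\sin\theta^X(m)|=|\sin\angle(X_m,s_m)|$.

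To carry this out I would first write $X_m=\pm(1,a)/\sqrt{1+a^2}$ with $|a|\le\sqrt[4]N$; this is legitimate because $X_m$ is a unit vector lying in $\Delta$ and $|\sin\theta^X(m)|$ depends only on the line spanned by $X_m$. Using that $u_m=(\Omega(m),-1)$ is orthogonal to $s_m$ one obtains the closed formula
$$|\sin\theta^X(m)|=\frac{|\langle X_m,u_m\rangle|}{\norm{u_m}}=\frac{|\Omega(m)-a|}{\sqrt{1+a^2}\;\sqrt{1+\Omega(m)^2}}.$$
Then I would bound each factor for $N$ large: since $\sqrt[4]N\le\tfrac12(\sqrt N-2)\le\tfrac12|\Omega(m)|$ we have $|\Omega(m)-a|\ge\tfrac12|\Omega(m)|$; since $|\Omega(m)|\ge1$ we have $\sqrt{1+\Omega(m)^2}\le\sqrt2\,|\Omega(m)|$; and $\sqrt{1+a^2}\le\sqrt{1+\sqrt N}\le\sqrt2\,\sqrt[4]N$. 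Multiplying these together the factors $|\Omega(m)|$ cancel and we get $|\sin\theta^X(m)|\ge\tfrac14 N^{-1/4}$.

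It then remains only to observe that $\tfrac14 N^{-1/4}\ge N^{-1/3}$ as soon as $N\ge 4^{12}$, which is covered by the standing hypothesis that $N$ is sufficiently large; this yields $|\sin\theta^X(m)|\ge 1/\sqrt[3]N$ for every $m\notin\mathit{Crit}$, as claimed. There is no genuine difficulty in this argument: the only point to watch is that the $N^{-1/4}$ gain coming from the aperture $\sqrt[4]N$ of the cone $\Delta$ must comfortably dominate the prescribed $N^{-1/3}$ — which it does — and that each elementary inequality used holds above an explicit threshold for $N$.
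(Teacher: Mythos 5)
Your proof is correct. The underlying idea is the same as the paper's: being \emph{good} confines $X_m$ to the near-horizontal cone $\Delta$, while being off $\mathit{Crit}$ forces $|\Omega(m)|\ge\sqrt N-2$ so that $s_m=(1,\Omega(m))$ is nearly vertical, and the lemma is a quantitative version of this transversality. The difference is in the execution: the paper estimates $|\sin\theta^X(m)|$ by splitting the angle at the vertical axis, $|\sin\theta^X(m)|\ge|\sin\angle(X_m,(0,1))|-|\angle(s_m,(0,1))|\ge\frac1{2\sqrt[4]N}-\arcsin\frac1{\sqrt{1+\Omega^2(m)}}$, and then invokes Lemma~\ref{Xnobad}; you instead write the sine directly as a projection onto the orthogonal vector $u_m$, getting the explicit closed formula $|\sin\theta^X(m)|=|\Omega(m)-a|/(\sqrt{1+a^2}\sqrt{1+\Omega^2(m)})$ and bounding each factor. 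Your route avoids the slightly informal bookkeeping of signed angles versus sines of angles that the subtraction argument requires, and it makes the threshold $N\ge4^{12}$ fully explicit. Both give the same $N^{-1/4}$-type lower bound, comfortably dominating $N^{-1/3}$, so either is a perfectly valid proof.
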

\begin{proof}
We compute for $m$ off the critical strip:
\[|\sin{\theta^X(m)}|\ge |\sin\angle(X_m,(0,1))|- |\angle (s_m,(0,1))|\ge  \frac{1}{2\sqrt[4]{N}} -\arcsin\frac{1}{\sqrt{1+\Omega^2(m)}}\]
and conclude the claim by Lemma \ref{Xnobad}.
\end{proof}
\begin{proof}[Proposition \ref{Bounds}]
The first claim follows directly from the fact that $\norm{df^{-1}|E^c}\leq 2N$. Assume that $(\ga,X)$\ is good, then
\begin{equation}
|\ga| \cdot E(\ga,X) = \int_{\ga\setminus\mathit{Crit}^c} \log \norm{d_mf(X)}d\ga + \int_{\ga\cap\mathit{Crit}} \log \norm{d_mf(X)}d\ga.
\end{equation}
By Lemmas \ref{Xnobad} and \ref{Xgood}, outside of the critical strip
\begin{equation}\label{fgrand}
\norm{d_mf(X_m)}\geq \sqrt[6]{N}-2
\end{equation}
and hence for sufficiently large $N$, with $|\epsilon|\le \lambda^{2N}$\
$$|\ga| \cdot E(\ga,X)\geq (1-\frac{8+\epsilon}{\sqrt{N}})(\frac{1}{6}\log(N)-2)|\gamma|-\frac{8+\epsilon}{\sqrt{N}}\log{2N} |\ga|
\geq \frac{\log{N}|\gamma|}{7}.$$
\end{proof}
As we said, we continue working with $N$\ for which the previous results hold.

\section{Transitions.}\label{Sect.5}

Now that we have concrete bounds for $E(\ga,X)$,\ we want to understand the proportion of good fields obtained after iterating a given one.
%\begin{defn}An adapted curve $\gamma$\ is \emph{critical} if $f^{-1}\gamma\cap \mathcal{C}\neq \emptyset$, otherwise it is said \emph{non-critical}.
%\end{defn}
Recall that $f^k\circ\ga=\ga_1^k\ast\cdots\ga^k_{[\mu^{2kN}]}\ast \ga^k_{[\mu^{2kN}]+1}$. We define for every adapted field $(\gamma, X)$:
\begin{equation*}
\begin{split}
&G_k=G_k(\ga,X)=\left\{1\leq j\leq [\mu^{2kN}]:\Big(\ga_j^k,\frac{f^k_{\ast} X}{\norm{f^k_{\ast} X}}\Big)\text{ is Good}\right\}\\
&B_k=B_k(\ga,X)=\left\{1\leq j\leq [\mu^{2kN}]:\Big(\ga_j^k,\frac{f^k_{\ast} X}{\norm{f^k_{\ast} X}}\Big)\text{ is Bad}\right\}
\end{split}
\end{equation*}

\begin{lemma}\label{goodtogood}
If $(\ga,X)$\ is a good adapted field and $f^{-1}\ga^1_j\cap \mathit{Crit}\neq \emptyset$, then the field $(\ga^1_j,\frac{f_{\ast}X}{\norm{f_{\ast}X}})$\ is good.
\end{lemma}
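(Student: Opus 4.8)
The plan is to control the direction of $Y:=f_{\ast}X/\norm{f_{\ast}X}$ along $\gamma_j^1$ by pulling everything back to $\gamma$ via $f^{-1}$ and using the explicit matrix form of $df|E^c$.

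First I would pin down the geometry of the arc $\eta:=f^{-1}\gamma_j^1$. Since $\gamma_j^1$ is a genuine $u$-curve in the decomposition $f\circ\gamma=\gamma_1^1\ast\cdots\ast\gamma_{[\mu^{2N}]}^1\ast\gamma_{[\mu^{2N}]+1}^1$, formula \eqref{tiempo} shows that $\eta$ is a sub-arc of $\gamma$ of length $\mu^{-2N}|\gamma|(1+O(\lambda^{2N}))$, and hence that its $x$-coordinate oscillates by at most $O(\lambda^{2N})$. By \eqref{positionbs} each component of $\mathit{Crit}$ has length larger than $2/\sqrt N$, which dwarfs $\lambda^{2N}$; so the hypothesis $\eta\cap\mathit{Crit}\neq\emptyset$ forces $x_p$, for \emph{every} $p\in\eta$, to lie within $O(\lambda^{2N})$ of one of the intervals $[b_1,b_2]$, $[b_3,b_4]$. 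Consequently $\Omega(p)=2+N\cos x_p$ is essentially constant along $\eta$, equal to some value $\omega$ with $|\omega|\le\sqrt N+2+o(1)$, oscillating by at most $O(N\lambda^{2N})=o(1)$ there.

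Next I would use that $(\gamma,X)$ is good and adapted. By Corollary \ref{variation} the field $X$ varies by less than $\lambda^{N/3}$ along all of $\gamma$, hence is essentially constant along $\eta$; writing $X_p=(a_p,b_p)$ in the identification $E^c\equiv\mathbb R^2$, the condition $X_p\in\Delta$ forces $a_p\neq0$ and $|b_p/a_p|\le\sqrt[4]N$, so the slope $t_p:=b_p/a_p$ is essentially constant along $\eta$. From $d_pf|E^c=\left(\begin{smallmatrix}\Omega(p)&-1\\1&0\end{smallmatrix}\right)$ one gets, for $m\in\gamma_j^1$ with $p=f^{-1}m$, that $d_pf(X_p)$ is proportional to $(\Omega(p)-t_p,\,1)=(\omega-t_p+o(1),\,1)$. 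Since $(\gamma_j^1,Y)$ is an adapted field by Lemma \ref{adapted}, it is \emph{good} precisely when, up to the $o(1)$ corrections above,
\[
|\omega-t_p|\ \ge\ N^{-1/4}\qquad\text{for every }p\in\eta .
\]

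The hard part is establishing this inequality. When $|\omega|>2N^{1/4}$ it is immediate, since then $|\omega-t_p|\ge|\omega|-|t_p|>N^{1/4}$. The delicate regime is $|\omega|\le2N^{1/4}$, in which $\eta$ is squeezed into an $O(N^{-3/4})$-neighborhood of a zero of $x\mapsto2+N\cos x$ inside $\mathit{Crit}$, and one must rule out that the (almost constant) slope $t_p$ of $X$ lies within $N^{-1/4}$ of $\omega$. I expect this to be the only genuine obstacle. The way I would attack it is to exploit the \emph{global} goodness of $(\gamma,X)$: on the part of $\gamma$ lying just outside $\mathit{Crit}$ the vector $s_m$ is nearly vertical, so $X_m\in\Delta$ there, together with the estimate in Lemma \ref{Xgood}, should pin the near-constant direction of $X$ away from the small values that $\omega$ can take on $\eta$. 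Once that is done, one only needs to check that the accumulated $o(1)$ errors from the two reduction steps are, for $N$ large, dominated by $N^{-1/4}$, which is routine.
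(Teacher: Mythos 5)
Your reduction is correct as far as it goes: the image field along $\gamma_j^1$ is proportional to $(\Omega(p)-t_p,1)$ at $p=f^{-1}m$, and goodness amounts to $|\Omega(p)-t_p|\ge N^{-1/4}$ up to negligible errors. But the step you defer as ``the hard part'' --- the regime $|\omega|\le 2N^{1/4}$ --- is not merely delicate: it is impossible, because the statement as printed is false there. Take $X$ to be the constant field $(1,0)$ on $E^c$ (an adapted field with $C_X=0$, and good since $(1,0)\in\Delta$), and choose $j$ so that $f^{-1}\gamma_j^1$ lies in the set $\{|\Omega|<N^{-1/4}\}$, i.e.\ where $\cos x$ is within $N^{-5/4}$ of $-2/N$. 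That set contains an $x$-interval of length of order $N^{-5/4}$ inside $\mathit{Crit}$, vastly larger than the $x$-extent $O(\lambda^{2N})$ of $f^{-1}\gamma_j^1$, so such a $j$ exists and satisfies the hypothesis $f^{-1}\gamma_j^1\cap\mathit{Crit}\neq\emptyset$. Then $Y_m\propto(\Omega(p),1)$ with $|\Omega(p)|<N^{-1/4}$ for every $m\in\gamma_j^1$, hence $Y_m\notin\Delta$ and $(\gamma_j^1,Y)$ is bad. No appeal to Lemma \ref{Xgood} or to ``global goodness'' can rescue this: those facts only constrain $|t_p|\le N^{1/4}$ and say nothing on $\mathit{Crit}$, while $\omega$ sweeps through all of $[2-\sqrt N,\,2+\sqrt N]$ inside $\mathit{Crit}$ and can coincide with $t_p$.

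The resolution is that the hypothesis contains a typo: it should read $f^{-1}\gamma_j^1\cap\mathit{Crit}=\emptyset$. This is what the paper actually proves (its proof opens with ``Let $m\notin\mathit{Crit}$''), it is the version restated for perturbations in Lemma \ref{tripforg}, and it is the version needed for Proposition \ref{G1B1}, where the bad children of a good field are bounded by the proportion $\sim l(\mathit{Crit})/2\pi\le\frac{8}{2\pi\sqrt N}$ of children whose preimage \emph{does} meet $\mathit{Crit}$. Under the corrected hypothesis your own computation closes the proof in one line, exactly as in the paper: Lemma \ref{Xnobad} gives $|\Omega(p)|\ge\sqrt N-2$ for every $p\notin\mathit{Crit}$, so $|\Omega(p)-t_p|\ge\sqrt N-2-N^{1/4}\gg N^{-1/4}$ pointwise and $d_pf(X_p)\in\Delta$. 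In particular the preparatory analysis of the length and location of $f^{-1}\gamma_j^1$, and the use of Corollary \ref{variation} to make $X$ nearly constant, are unnecessary: the argument is pointwise in $m$.
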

\begin{proof}
Let $m\notin \mathit{Crit}$. For every $|n|\le \sqrt[4] N$, we recall that
\[d_mf(1,n)= (\Omega(m)-n,1).\]
As $|\Omega(m)-n| \ge |\Omega(m)|-|n|\ge \sqrt N- 2 -\sqrt[4] N$ by Lemma \ref{Xnobad}, it comes $d_mf(1,n)\in \Delta$ and the lemma follows.\end{proof}

\begin{lemma}\label{badtogood}
For every $N$ sufficiently large, for every bad adapted field $(\ga,X)$ there exists a strip $S_X$ of length  $\pi$ so that
for every $j$ satisfying  $f^{-1}\ga^1_j\subset  S_X$, the field $(\ga^1_j,\frac{f_*{\ast}X}{\norm{f_*{\ast}X}})$\ is good.
\end{lemma}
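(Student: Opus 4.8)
The plan is to analyze the action of $d_mf|E^c$ on the center cone $\Delta$ directly, exploiting the fact that a \emph{bad} adapted field, by Corollary \ref{variation}, has very small angular variation along $\ga$: all the vectors $X_m$ point in \emph{almost the same} direction, which by badness is a direction \emph{outside} $\Delta$, i.e. with slope of absolute value $\ge \sqrt[4]N$. Writing $X_m=\mathbb R\cdot(1,n_m)$ (or $\mathbb R\cdot(0,1)$ in the nearly-vertical case), the image is $d_mf(1,n_m)=(\Omega(m)-n_m,1)$, which lies in $\Delta$ precisely when $|\Omega(m)-n_m|\ge \sqrt[4]N$, i.e. when $\Omega(m)=2+N\cos x$ is not within $\sqrt[4]N$ of the (essentially fixed) slope $n_m$. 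Since $n_m$ varies by at most $\lam^{N/3}\ll 1$ along $\ga$ while $\Omega$ ranges over $[2-N,2+N]$, the ``bad'' set of $x$-values — those for which $|\Omega(m)-n_m|<\sqrt[4]N$ — is contained in a union of short arcs around the solutions of $N\cos x = n_m-2$, whose total length is $O(\sqrt[4]N/N)=O(N^{-3/4})$, far less than $\pi$.

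The key steps, in order, are as follows. First, fix a bad adapted field $(\ga,X)$ and a point $m_0\in\ga$ with $X_{m_0}\notin\Delta$; by Corollary \ref{variation} every $X_m$ is within $\lam^{N/3}$ of $X_{m_0}$, so there is a fixed ``forbidden'' slope value — call it $n_0$, with $|n_0|\ge \sqrt[4]N - 1$, or the vertical direction — approximating all the $X_m$. Second, I would treat the vertical/near-vertical case separately: if $X_m\approx(0,1)$ then $d_mf(X_m)\approx(-1,0)\in\Delta$ for \emph{every} $m$, so any strip of length $\pi$ works; so assume $X_m=\mathbb R\cdot(1,n_m)$ with $|n_m-n_0|\le\lam^{N/3}$ and $|n_0|\ge\sqrt[4]N-1$. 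Third, for $m=(x,y,z,w)$ with $x$ in the region where $|2+N\cos x - n_0|\ge 2\sqrt[4]N$, one has $|\Omega(m)-n_m|\ge 2\sqrt[4]N-1-\lam^{N/3}\ge\sqrt[4]N$ for large $N$, hence $d_mf(1,n_m)=(\Omega(m)-n_m,1)\in\Delta$; define $S_X$ to be (a sub-strip of) this good $x$-region. Fourth, I estimate the length of the complementary ``dangerous'' $x$-set: it is where $|N\cos x-(n_0-2)|<2\sqrt[4]N$, i.e. $\cos x$ lies in an interval of length $4N^{-3/4}$; since $|\,\frac{d}{dx}\cos x\,|$ can be small only near $x=0,\pi,2\pi$, this set is contained in at most a bounded number of arcs of total length $O(N^{-3/4})+O(\text{arcs near }\cos x=\pm1)$, which is $\ll 2\pi-\pi=\pi$ for $N$ large — so a strip $S_X$ of length $\pi$ inside the good region genuinely exists. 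Finally, for every $j$ with $f^{-1}\ga^1_j\subset S_X$, every point of $f^{-1}\ga^1_j$ has its center vector sent into $\Delta$ by $d f$, so $(\ga^1_j, f_\ast X/\|f_\ast X\|)$ is good; it is an adapted field by Lemma \ref{adapted}, completing the proof.

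The main obstacle I anticipate is the length estimate for the dangerous $x$-set: one must be careful that when the forbidden slope $n_0$ is comparable to $N$ in size (it can be as large as $\sim N$, since vectors in the worst bad fields can point almost along $u_m$), the equation $N\cos x = n_0-2$ may have its solutions clustered near $x=0$ or $x=\pi$ where $\cos$ is not monotone-with-good-derivative, potentially making the preimage arcs longer. I would handle this by splitting into the case $|n_0-2|\le N/2$ (solutions lie in a region where $|\sin x|\gtrsim 1$, so the arc length is genuinely $O(N^{-3/4})$) and the case $|n_0-2|> N/2$ (then $|\Omega(m)-n_m|=|N\cos x - n_0+2\pm\lam^{N/3}|$ is already $\ge\sqrt[4]N$ on all of $x\in[\pi/3,5\pi/3]$, say, so one can take $S_X=S[\pi/3,5\pi/3+\text{adjust}]$ of length $\ge\pi$ outright). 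In both subcases the good region contains a strip of length $\pi$, which is all that is claimed.
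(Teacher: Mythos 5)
Your overall strategy — fix a reference point $m_0$, use Corollary \ref{variation} to treat $X_m$ as essentially constant along $\gamma$, and observe that $d_mf(X_{m_0})$ is collinear with $(\Omega(m)-n,1)$ — is exactly the paper's. But there are two linked problems in the details that prevent the argument from closing.

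First, your cone-membership test is inverted. The cone is $\Delta=\mathbb R\cdot\{(1,n):|n|\le\sqrt[4]{N}\}$, so the vector $(\Omega(m)-n_m,1)$ lies in $\Delta$ iff its slope $1/(\Omega(m)-n_m)$ has absolute value $\le\sqrt[4]{N}$, i.e.\ iff $|\Omega(m)-n_m|\ge N^{-1/4}$, not $\ge\sqrt[4]{N}$ as you wrote. Your condition is sufficient, so this alone would be a harmless over-restriction, except it forces your ``dangerous set'' to be much larger than it needs to be — of $x$-measure $O(N^{-3/4})$ rather than $O(N^{-5/4})$ — and that feeds into the second problem.

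Second, the deduction ``the dangerous set has total length $\ll\pi$, hence a strip of length $\pi$ exists in the good region'' is false. For a generic bad slope $n_0$, the dangerous set consists of \emph{two} short arcs (around the two solutions of $\cos x=(n_0-2)/N$), so its complement consists of two arcs whose lengths sum to $2\pi - O(N^{-3/4})$; the larger of these need only be $\ge \pi - O(N^{-3/4})$, which is strictly less than $\pi$. This actually bites in the critical regime $|n_0|\approx\sqrt[4]{N}$, where $c=(n_0-2)/N\approx N^{-3/4}$: the asymmetry between the two complementary arcs is $2\arcsin c\approx 2N^{-3/4}$, comparable to the $O(N^{-3/4})$ cuts, and with your threshold both complementary arcs can be $<\pi$. (With the correct threshold the cuts are $O(N^{-5/4})\ll N^{-3/4}$, so the asymmetry dominates and one arc does exceed $\pi$; but you would still need to make that comparison explicit rather than appeal only to a total-measure bound.)

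The paper bypasses all of this with one observation: pick $S_X$ to be the half-torus where $\cos x$ has the sign opposite to $n$. On that strip $N\cos x$ and $-n$ have the same sign, so $|N\cos x + 2 - n|\ge |n|-2 \ge \sqrt[4]{N}-2$ at \emph{every} point, with no dangerous set at all, and the strip has length exactly $\pi$ by construction. I recommend replacing your steps three through five by this direct choice of $S_X$; the rest of your outline (in particular the treatment of the near-vertical case) is sound.
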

\begin{proof}
For $m=(x,y,z,w)$ and $m_0=(x_0,y_0,z_0,w_0)\in M$,\ one has
\begin{equation}\label{s0}
df_m(X_m)=d_mf(X_m-X_{m_0})+d_mf(X_{m_0})
\end{equation}
where by Corollary \ref{variation} the first vector on the right hand side has norm less than $2N\lam^{N/3}$. %Observe that $(s_m)_{m\in \gamma}$ takes all directions but the ones belonging to the vertical cone
%$$
%\mathcal{E}=\mathbb R\cdot \{(1,n+2):\; |n|\geq N\}
%$$

%We consider two cases. First assume the existence of  $m_0\in \gamma$ such that $X_{m_0}$ is colinear with ${s(m_0)}$. We compute

%\begin{equation}
%d_mf(s_{m_0})= (\Omega(m)-\Omega(m_0),1)=(N \cos(x)-N\cos(x_0),1)
%\end{equation}
%For every $m$ with $x\in [x_0+\pi-\frac{\pi}{2},x_0+\pi+\frac{\pi}{2}]+2\pi \mathbb Z$ it holds
%\begin{equation}
%N|\cos x-\cos x_0|>N|\cos x_0|
%\end{equation}
%As the adapted field $X$ is bad and since $X_{m_0}$ is colinear to $(1, N\cos x_0)$, we obtain
%\begin{equation}
%|N\cos x_0|>\sqrt[4]{N}(1-\lambda^{N/3}).
%\end{equation}

%By the previous equations, if $x\in [x_0+\pi-\frac{\pi}{2},x_0+\pi+\frac{\pi}{2}]$ then the first component of $df_m(X_m)$ is bigger than $\sqrt[4]{N}(1-\lambda^{N/3})-2N\lam^{N/3}$, and hence $df_m(X_m)$ makes a small angle with the $x$ axis. We conclude that $df_m(X_m)\in \chi$ if $x\in [x_0+\pi-\frac{\pi}{2},x_0+\pi+\frac{\pi}{2}]$.

%In the second case, we assume that $X_m$\ is never colinear with $s_m$. As the image of $(s_m)_{m\in\ga}$ covers the whole cone $\chi$, by the intermediate value theorem  there exist some $m_0$ such that $X_{m_0}\not\in \chi$.

By the ``badness'' hypothesis there exists $m_0$ such that $X_{m_0}$ is collinear with a vector $(1,n)$ with $|n|\geq \sqrt[4]{N}$.
%By equation \eqref{s0} $X_{m_0}$ is colinear with a vector $(1,n)$ with $|n|\geq 2+N-2N\lam^{N/3}$.
 For every $m\in\ga$, the vector $df_m(X_{m_0})$ is collinear with the vector $(N\cos x +2-n,1)$. Hence, for every $m$ such that $\cos x$ is of the same sign as $-n$, the vector  $df_m(X_{m_0})$ is large and makes a small angle with the $x$ axis, and thus is in $\Delta$, which implies that $df_m(X_{m})\in\Delta$ as well. Such condition on the cosine corresponds to a strip of length $\pi$.
\end{proof}

These lemmas enable us to bound the ratio of the number of transitions between  bad and good adapted fields.
Indeed, for every $u$-curve $\gamma$, the elements of the partition
$(f^{-1}(\gamma_j^1))_{1\leq j\le [\mu^{2N}]}$ have very small and  comparable length when $N$ is large. Since the first coordinate of $\gamma$ has almost constant derivative we can deduce the following Proposition from the two above lemmas:

\begin{proposition}\label{G1B1} For every bad adapted field $(\gamma,X)$, we have:
$$\#G_1(\gamma, X)\ge \frac{\mu^{2N}}{3},\quad \#B_1(\gamma, X)\leq \frac{2\mu^{2N}}{3}\quad \text{with $\#$ the cardinality of a set}. $$
For every good adapted field $(\gamma,X)$, we have:
$$\#G_1(\gamma, X)\ge (1-\frac{10}{2\pi\sqrt{N}})\mu^{2N},\quad \#B_1(\gamma, X)\leq \frac{10}{2\pi\sqrt{N}}\mu^{2N}. $$
\end{proposition}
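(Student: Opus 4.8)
The plan is to derive Proposition~\ref{G1B1} from Lemmas~\ref{goodtogood} and~\ref{badtogood} by translating the statements about the critical strip and the strip $S_X$ into counting estimates on the index set $\{1,\dots,[\mu^{2N}]\}$. The underlying geometric fact, already hinted at in the paragraph preceding the proposition, is that the partition of $\gamma$ into the pieces $f^{-1}(\gamma_j^1)$, $1\le j\le[\mu^{2N}]$, has pieces of comparable length: by \eqref{tiempo} each $\gamma_j^1$ has length essentially $\mu^{-2N}|f^1\circ\gamma|$ up to a factor $1\pm 2\lambda^{2N}$, and since the $x$-coordinate of a $u$-curve has derivative in $[1-N^{-1},1+N^{-1}]$ (Corollary~\ref{partialcoro}), the $x$-projection of each $f^{-1}(\gamma_j^1)$ is an interval of length $\ell_j$ with $\ell_j \in [(1-cN^{-1})\,\ell,(1+cN^{-1})\,\ell]$ for a common $\ell\approx 2\pi\mu^{-2N}$. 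First I would record this comparability precisely and note that $\gamma$ makes approximately one full turn in $x$, so the $x$-projections $\{I_j\}$ essentially tile $[0,2\pi]$ (with at most the single boundary piece $\gamma^1_{[\mu^{2N}]+1}$ unaccounted for).

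\medskip

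Next I would treat the good case. If $(\gamma,X)$ is good, Lemma~\ref{goodtogood} says that the only $\gamma_j^1$ that could fail to be good are those for which $f^{-1}\gamma_j^1$ is \emph{disjoint} from $\mathit{Crit}$ is vacuous --- rereading the lemma, it says that whenever $f^{-1}\gamma^1_j\cap\mathit{Crit}\ne\emptyset$ the piece is \emph{good}; wait, that is the wrong direction, so the bad pieces are exactly those with $f^{-1}\gamma^1_j\cap\mathit{Crit}=\emptyset$ --- no: a bad piece forces $f^{-1}\gamma^1_j$ to meet $\mathit{Crit}$? Let me restate cleanly. Lemma~\ref{goodtogood} gives: $f^{-1}\gamma^1_j\cap\mathit{Crit}\ne\emptyset \implies (\gamma^1_j,\cdot)$ good. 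Contrapositively, if $(\gamma^1_j,\cdot)$ is bad then $f^{-1}\gamma^1_j\subset\mathit{Crit}^c$ --- hmm, that still does not bound the bad set. The correct reading must be that Lemma~\ref{goodtogood} together with Lemma~\ref{Xgood}/\ref{Xnobad} shows a bad $\gamma^1_j$ can only arise when $f^{-1}\gamma^1_j$ \emph{meets} $\mathit{Crit}$; so I would reconcile this by using that for a good field the angle estimate fails only inside the critical strip, hence $f^{-1}\gamma^1_j$ must intersect $\mathit{Crit}$ for the piece to be bad. Then $\#B_1 \le \#\{j: I_j\cap \pi(\mathit{Crit})\ne\emptyset\}$, and since $l(\mathit{Crit})\le 8/\sqrt N$ and each $I_j$ has length $\approx 2\pi\mu^{-2N}$, the number of such $j$ is at most $(8/\sqrt N)/(2\pi\mu^{-2N})(1+o(1)) + O(1) \le \frac{10}{2\pi\sqrt N}\mu^{2N}$ for $N$ large (the $+O(1)$ boundary pieces absorbed into the slack between $8$ and $10$). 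This yields $\#G_1\ge(1-\frac{10}{2\pi\sqrt N})\mu^{2N}$.

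\medskip

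For the bad case, Lemma~\ref{badtogood} produces a strip $S_X$ of length $\pi$ such that $f^{-1}\gamma^1_j\subset S_X \implies (\gamma^1_j,\cdot)$ good. Since $\gamma$ makes one turn in $x$ and the $I_j$ tile $[0,2\pi]$ with comparable lengths, the number of $j$ with $I_j\subset \pi(S_X)$ is at least $\frac{\pi - 2\cdot(\text{one piece length})}{2\pi}\mu^{2N}(1-cN^{-1}) \ge \frac{\mu^{2N}}{3}$ for $N$ large (the $\pi/2\pi=1/2$ gets degraded to $1/3$ by the piece-length correction and by possibly discarding the two boundary pieces of the strip and the piece $\gamma^1_{[\mu^{2N}]+1}$). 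Hence $\#G_1\ge\mu^{2N}/3$ and $\#B_1\le 2\mu^{2N}/3$.

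\medskip

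The main obstacle I anticipate is the careful bookkeeping of the ``boundary'' pieces: the decomposition $f^1\circ\gamma=\gamma^1_1\ast\cdots\ast\gamma^1_{[\mu^{2N}]}\ast\gamma^1_{[\mu^{2N}]+1}$ has a leftover fractional piece, the pieces $I_j$ do not exactly tile a circle of circumference $2\pi$ because $\gamma_x$ only \emph{approximately} has unit speed and only approximately closes up, and a strip of length $\pi$ or $8/\sqrt N$ will generically not align with partition endpoints, so one loses or gains $O(1)$ pieces at each end. All of these are $O(1)$ or $O(N^{-1/2}\mu^{2N})$ errors that are comfortably swallowed by the gap between the sharp constants ($1/2$, $8/\sqrt N$) and the stated ones ($1/3$, $10/2\pi\sqrt N$) once $N$ is large; the work is simply to organize these estimates without circular reasoning about which pieces are good. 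Everything else is a direct combination of the comparability of piece lengths with the two preceding lemmas.
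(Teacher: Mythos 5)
Your proposal is correct and follows the same line as the paper's one-paragraph proof: it observes that the pieces $f^{-1}(\gamma_j^1)$ have comparable $x$-extent $\approx 2\pi\mu^{-2N}$ and so essentially tile the $x$-circle, then counts those contained in the length-$\pi$ strip $S_X$ of Lemma~\ref{badtogood} (roughly half of them) and those meeting $\mathit{Crit}$ (roughly $\frac{8}{2\pi\sqrt N}\mu^{2N}$ of them), with the slack between the sharp constants and the stated $1/3$ and $10/(2\pi\sqrt N)$ absorbing the $O(1)$ boundary discrepancies you list. You also correctly diagnose and resolve a misprint in Lemma~\ref{goodtogood}: its hypothesis should read $f^{-1}\gamma_j^1\cap\mathit{Crit}=\emptyset$ rather than $\neq\emptyset$, as both its own proof (which only treats $m\notin\mathit{Crit}$) and the perturbative analogue Lemma~\ref{tripforg} confirm, and with that reading your bound $\#B_1\le\#\{j:f^{-1}\gamma_j^1\ \text{meets}\ \mathit{Crit}\}$ is exactly the paper's implicit estimate.
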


\begin{proof}
By the previous Proposition, if $(\gamma,X)$ is bad, there exists a band $S_X$ of length equal to $\pi$ such that for every $j$ satisfying  $f^{-1}\ga^1_j\subset  S_X$ the adapted field $(\ga^1_j,\frac{(f_N){\ast}X}{\norm{(f_N){\ast}X}})$\ is good. This corresponds to almost half of curves $\ga^1_j$. The second part is analogous, using $l(\mathit{Crit})\leq \frac{8}{\sqrt N}$.
\end{proof}

Let $\eta=\eta_N:=\frac{5}{\pi\sqrt{N}}$. The Proposition permits us to readily calculate

\begin{equation}\label{GyB}
\begin{split}
&\#G_{k+1}\geq (1-\eta)\mu^{2N}\#G_k+\frac{1}{3}\mu^{2N}\#B_k\\
&\#B_{k+1} \leq \eta\mu^{2N}\#G_k+\frac{2}{3}\mu^{2N}\#B_k+ \mu^{2N}.
\end{split}
\end{equation}
The last term $+ \mu^{2N}$ is given by the possible bad curves coming from the slice $\gamma_{[\mu^{2N}]+1}^{k}$ of $f^{k}\gamma$  which is not a $u$-curve.
\begin{lemma}\label{doce}
If $N$\ is sufficiently big then for all good adapted fields $(\ga,X)$ and for all  $k\geq 1$\ we have $\#G_{k}>100 \#B_{k}$.
\end{lemma}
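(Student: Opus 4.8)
The plan is to iterate the linear recursion \eqref{GyB} starting from a good adapted field and track the ratio $\#B_k/\#G_k$. Since $(\ga,X)$ is good we have $\#B_0=0$ and $\#G_0=1$, so applying \eqref{GyB} once gives $\#B_1\le \eta\mu^{2N}+\mu^{2N}=(1+\eta)\mu^{2N}$ and $\#G_1\ge(1-\eta)\mu^{2N}$; in particular $\#G_1>100\#B_1$ fails to hold directly from this crude bound, so the first step is actually to observe that for a \emph{good} field the slice estimate is much better. More precisely, for $k=1$ the term $+\mu^{2N}$ is the only source of bad curves besides the $\eta\mu^{2N}$ coming from the critical strip, and both are of relative size $O(1/\sqrt N)$ compared with $\#G_1\ge(1-\eta)\mu^{2N}$, except that the lone $+\mu^{2N}$ is not small relative to a single $\mu^{2N}$. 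The correct reading is that $[\mu^{2N}]\ge \mu^{2N}-1$ is huge, so $\#G_1\ge(1-\eta)\mu^{2N}$ and $\#B_1\le \eta\mu^{2N}+1$; hence $\#B_1/\#G_1\le (\eta\mu^{2N}+1)/((1-\eta)\mu^{2N})\le 3\eta$ for $N$ large, which is $<1/100$. This settles the base case $k=1$.

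For the inductive step I would show that the ratio $r_k:=\#B_k/\#G_k$ stays below $1/100$ for all $k$, by proving that the map induced by \eqref{GyB} on $r_k$ is a contraction toward a fixed point well below $1/100$. Writing $g=\#G_k$, $b=\#B_k$, from \eqref{GyB} one gets
\[
r_{k+1}=\frac{\#B_{k+1}}{\#G_{k+1}}\le \frac{\eta\mu^{2N}g+\tfrac23\mu^{2N}b+\mu^{2N}}{(1-\eta)\mu^{2N}g+\tfrac13\mu^{2N}b}
=\frac{\eta+\tfrac23 r_k+\mu^{-2N}g^{-1}\cdot\mu^{2N}/g}{(1-\eta)+\tfrac13 r_k}.
\]
The term $\mu^{2N}/g=\mu^{2N}/\#G_k$ is at most $1/((1-\eta)-\cdots)$ times $\mu^{-2Nk}\cdot(\text{something})$; more cleanly, since $\#G_k$ grows like $\mu^{2Nk}$ (each step multiplies by at least $(1-\eta)\mu^{2N}$), we have $\mu^{2N}/\#G_k\le \mu^{-2N(k-1)}(1-\eta)^{-(k-1)}\le 2\mu^{-2N(k-1)}$, which is negligible for $k\ge 1$ and tiny for $k\ge 2$. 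Dropping that term, $r_{k+1}\le (\eta+\tfrac23 r_k+\epsilon_k)/((1-\eta)+\tfrac13 r_k)$ with $\epsilon_k\to0$ geometrically; one checks that if $r_k\le 1/100$ then, for $N$ large (so $\eta<1/1000$, say) and all $k\ge1$, the numerator is at most $1/1000+1/150+\epsilon_k$ and the denominator is at least $1-1/1000$, giving $r_{k+1}\le 1/100$. This closes the induction, and $\#G_k>100\#B_k$ follows.

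The main obstacle is the additive $+\mu^{2N}$ term in \eqref{GyB}: it does not shrink relative to a single $\mu^{2N}$, only relative to the exponentially growing $\#G_k$. So the argument must be set up so that one never compares $+\mu^{2N}$ with $\mu^{2N}$ but always with $\#G_k\ge(1-\eta)^k\mu^{2Nk}$, which is safe for every $k\ge1$; the genuinely delicate case is $k=1$, handled above by noting $\#B_1\le\eta\mu^{2N}+1$ rather than $\eta\mu^{2N}+\mu^{2N}$ — that is, the $+\mu^{2N}$ in \eqref{GyB} should be read as ``$+$(length of the leftover slice)$\cdot\mu^{2N}$'', and that leftover slice has length $O(\lambda^N)$, so it contributes at most $1$ (or a bounded number of) extra bad curve, not $\mu^{2N}$ of them. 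Once this bookkeeping is made precise, the contraction estimate is routine.
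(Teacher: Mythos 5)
Your overall strategy --- induction on $k$ driven by the recursion \eqref{GyB}, showing that the ratio $\#B_k/\#G_k$ stays below $1/100$ --- is exactly the paper's, and your inductive step (dividing through by $\mu^{2N}\#G_k$ and observing that the extra term $1/\#G_k$ is negligible for $k\geq 1$ since $\#G_k\geq (1-\eta)^k\mu^{2Nk}$) is essentially the computation the paper performs.

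However, your explanation of the base case rests on a misreading of where the $+\mu^{2N}$ in \eqref{GyB} comes from. You assert that the leftover slice $\gamma^k_{[\mu^{2kN}]+1}$ has length $O(\lambda^N)$ and therefore yields at most one extra bad curve at the next level. In fact the leftover slice is a \emph{segment of a $u$-curve}, and by Corollary \ref{partialcoro} a full $u$-curve has length of order $\lambda^{-N}$, which is enormous; the leftover slice has length anywhere between $0$ and that amount. Under $f$ it is stretched by $\mu^{2N}$ and can thus genuinely cover up to $\mu^{2N}$ of the $u$-curves of $f^{k+1}\gamma$. So the $+\mu^{2N}$ in \eqref{GyB} is the correct worst case for $k\geq 1$ and cannot be read as $+O(1)$. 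The reason the base case is unobstructed is different and simpler: at level $k=0$ there is \emph{no} leftover slice at all, because $\gamma$ itself is one complete $u$-curve, so Proposition \ref{G1B1} gives $\#B_1\leq \eta\mu^{2N}$ outright (the paper notes this with $B_0=0$). Your arithmetic $\#B_1\leq \eta\mu^{2N}+1$ is still a valid (if slack) upper bound and the $3\eta$ estimate follows, so the proof survives; but the picture you sketch at the end --- that ``the leftover slice has length $O(\lambda^N)$ and contributes at most a bounded number of extra bad curves'' --- is not what is happening, and if one tried to use that reading for $k\geq 1$ it would be false. The correct mechanism throughout is the one you already use in your inductive step: $\#G_k$ grows like $\mu^{2Nk}$, so the $\mu^{2N}$ extra bad curves become a vanishingly small fraction.
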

\begin{proof}
The proof is done by induction. By hypothesis $\#G_{0}>100 \#B_{0}=0$. Obverse also that $\#G_{1}>100 \#B_{1}\not=0$. Assume the claim was established for $k\ge 1$. Then by the previous Lemma and the induction hypothesis
\begin{equation*}
\begin{split}
\frac{\#B_{k+1}}{\#G_{k+1}}\le \frac{\eta\#G_k+\frac{2}{3}\#B_k+1}{(1-\eta)\#G_k+\frac{1}{3}\#B_k} \le
\frac{\eta\#G_k+\frac{2}{300}\#G_k+1}{(1-\eta)\#G_k}
\le
\frac{\eta+\frac{2}{300}}{1-\eta} +\frac{1}{2 \mu^{2N}}
\end{split}
\end{equation*}
%\begin{equation*}
%\begin{split}
%\frac{\#G_{k+1}}{\#B_{k+1}}\geq&\frac{(1-\eta)\#G_k+\frac{1}{3}\#B_k}
%{\eta\#G_k+\frac{2}{3}\#B_k}=
%\frac{(1-\eta)\#G_k+\frac{1}{3}([\mu^{2Nk}]-\#G_k)}{\eta\#G_k+\frac{2}{3}\#B_k}\\
%&\geq \frac{(2/3-\eta)+\frac{[\mu^{2Nk}]}{3\#G_k}}{\eta+\frac{2}{3}\frac{1}{100}}
%\geq \frac{(2/3-\eta)+\frac{1}{3}}{\eta+\frac{2}{3}\frac{1}{100}}
%\end{split}
%\end{equation*}
The limit of the last term when $N\mapsto \infty$  is smaller  than $1/150<1/100$, hence for $N$ sufficiently large (independently of $k$), $\#G_{k+1}> 100\#B_{k+1}$.
\end{proof}

\begin{proof}[Proof of Theorem \ref{main} for $f_N$]\

Take $(\ga,X)$\ a good adapted field and recall that (\ref{poursct6}):
\begin{multline}\label{intepourf}
I_n^{\ga,X}=\sum_{k=1}^{n-1}\frac{1+\epsilon_k}{\mu^{2Nk}|\ga|}\left(\sum_{j=0}^{[\mu^{2kN}]}\int_{\ga_j^k} \log{\norm{d_{m}f(Y^k)}}d\gamma_{j}^k\right.\\
\left.+\int_{\ga^k_{[\mu^{2kN}]+1}} \log{\norm{d_{m}f(Y^k)}}d\ga^k_{[\mu^{2kN}]+1}\right)
\end{multline}
Observe that the length of $u$-curves is almost constant by Corollary \ref{partialcoro}, and so $|\ga_j^k|>\frac{9}{10}|\ga|$. Note that by the previous Lemma $\#G_k>\frac{1}{1+\frac1{100}}[\mu^{2kN}]$. Splitting between bad and good adapted fields and using Proposition \ref{Bounds}, we deduce\footnote{Observe that for $k=0, B_0=0$.}
\begin{equation}
\begin{split}
&\frac{I_n^{\ga}}{n}>\frac{1}{n}\sum_{k=0}^{n-1} \frac{9(1+\epsilon_k)}{10\mu^{2Nk}} \left(\#G_k\frac{\log{N}}{7}-(\#B_k+1)\log{2N}\right)\\
& >\frac{1}{n}\sum_{k=0}^{n-1}\frac{9(1+\epsilon_k)}{10}\left(\frac{1}{1+\frac1{100}}\big(\frac{\log{N}}{7}-\frac{\log{2N}}{100}\big)-\frac{\log{2N}}{\mu^{2kN}}\right)>\frac{1}{20}\log{N}.
\end{split}
\end{equation}
An application of Proposition \ref{ergo} finishes the proof.
\end{proof}

\section{Robustness of Non-Uniform Hyperbolicity}\label{Sect.6}

In this section we indicate the relevant changes to our previous procedure to prove Theorem \ref{main}, namely that there is a small neighborhood $U\subset Diff^2_{leb}(M)$\ of $f_N$\ formed by $C^2$-conservative diffeomorphisms having a NUH physical measure. We take $N$ large such that the main Theorem and its intermediate results hold for $f_N$.
The neighborhood $U$ will be chosen small depending on $N$ (which is supposed large).

We start by fixing $U$\ such that every $g\in U$\ is partially hyperbolic, and
\begin{enumerate}
\item[(\textit{A})] for all unit vectors $v^s\in E^s_g$,
$v^c\in E^c_g$ and $v^u\in E^u_g$, the following inequalities holds:
%\begin{align*}{c}
$$0.99\lam^{2N}\leq \norm{dg(v^s)} \leq 1.01 \lam^{2N}<\!\!<1$$
%\\
$$1<\!\!<0.99\mu^{2N}\leq \norm{dg(v^u)} \leq 1.01 \mu^{2N}$$
%\\
$$\frac{1}{2N}\leq \norm{dg(v^c)} \leq 2N.$$
%\end{align*}
\item[(\textit{B})] the following bounds hold $\norm{d^2g}, \norm{d^2g^{-1}}\leq 2N.$
\item[(\textit{C})] $E^c_g$\ is $1/2$-H\"older.
%\end{center}}
\end{enumerate}

We comment on the last point. Given a partially hyperbolic map $g$ consider constants  $\varsigma,\upsilon,\varrho,\widehat{\varrho},\widehat{\upsilon},\widehat{\varsigma}$ such that for all unit vectors $v^s\in E^s_g$,
$v^c\in E^c_g$ and $v^u\in E^u_g$
:
\[
\varsigma<\norm{d_mg(v^s)}<\upsilon,\quad
\varrho<\norm{d_mg(v^c)}<\widehat{\varrho}^{-1},\quad
\widehat{\upsilon}^{-1}<\norm{d_mg(v^u)}<\widehat{\varsigma}^{-1}
\]
\begin{theorem}%[Pugh-Shub-Wilkinson \cite{HolFolRev}]
If $g$ is of class $\mathcal{C}^2$ and $\theta\in (0,1)$ satisfies
$$
\upsilon<\varrho\varsigma^{\theta}, \widehat{\upsilon}<\widehat{\varrho}\widehat{\varsigma}^{\theta}
$$
then $E^c_g$ is $\theta$-H\"older.
\end{theorem}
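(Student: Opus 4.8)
The plan is to recognize $E^c_g$ as the intersection of two $dg$-invariant subbundles, each of which is the fixed section of a fiberwise-contracting graph transform, and then to invoke the Hölder section theorem of Hirsch--Pugh--Shub, checking that the two displayed bunching inequalities are precisely what makes the relevant $\theta$-Hölder estimate close. Throughout, I would work with a Riemannian metric adapted to the splitting $E^s_g\oplus E^c_g\oplus E^u_g$ (the three bundles mutually orthogonal), which changes nothing, since Hölder regularity of a subbundle is independent of the choice of (uniformly equivalent) metric on the compact manifold $M$. First I would set $E^{cs}_g:=E^c_g\oplus E^s_g$ and $E^{cu}_g:=E^c_g\oplus E^u_g$; these are continuous and $dg$-invariant, one has $E^c_g=E^{cs}_g\cap E^{cu}_g$ because $TM=E^s_g\oplus E^c_g\oplus E^u_g$, and the angle between $E^{cs}_g$ and $E^{cu}_g$ is bounded below by compactness of $M$ and continuity of the splitting. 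Since the intersection of two uniformly transverse $\theta$-Hölder subbundles is again $\theta$-Hölder (elementary linear algebra, fiber by fiber), it suffices to prove that each of $E^{cs}_g$ and $E^{cu}_g$ is $\theta$-Hölder. These two assertions are interchanged by replacing $g$ with $g^{-1}$, which swaps the roles of $(\varsigma,\upsilon,\varrho)$ and $(\widehat\varsigma,\widehat\upsilon,\widehat\varrho)$ and of the two hypotheses, so I would only treat $E^{cs}_g$, using $\widehat\upsilon<\widehat\varrho\widehat\varsigma^{\theta}$.

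To analyse $E^{cs}_g$ I would fix a pair of \emph{smooth} subbundles $\widetilde E^u$, $\widetilde E^{cs}$ that are $C^0$-close to $E^u_g$, $E^{cs}_g$ and transverse, together with a smooth frame adapted to $TM=\widetilde E^{cs}\oplus\widetilde E^u$; in a uniform $C^0$-neighbourhood of $E^{cs}_g$ every subbundle of the right dimension is then the graph of a section $m\mapsto L_m\in\mathcal L(\widetilde E^{cs}_m,\widetilde E^u_m)$, and $E^{cs}_g$ is the graph of a small section $\sigma$. Since $E^{cs}_g$ is invariant and $E^u_g$ is the expanded direction, $\sigma$ is the unique fixed section, in this neighbourhood, of the graph transform of $dg^{-1}$, which can be written $\sigma_m=\Psi_m(\sigma_{gm})$ for a family of affine maps $\Psi_m$. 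Expressed in the smooth frame, $dg^{-1}$ differs from a block-diagonal map with respect to $\widetilde E^{cs}\oplus\widetilde E^u$ by an error that is small together with the $C^0$-approximation, and the $(\textit{A})$-type bounds give $\|dg^{-1}|_{E^u_g}\|\le\widehat\upsilon$ and $\|dg|_{E^{cs}_g}\|\le\max(\upsilon,\widehat\varrho^{-1})=\widehat\varrho^{-1}$; hence $\Psi_m$ is linearly contracting with $\mathrm{Lip}(\Psi_m)\le k$ and $k=\widehat\upsilon/\widehat\varrho+o(1)<1$. Because $g$ is $\mathcal C^2$, $dg$ is $\mathcal C^1$, so $m\mapsto\Psi_m$ is Lipschitz. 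A routine telescoping of $\sigma_m=\Psi_m(\sigma_{gm})$ then shows that the ansatz $\|\sigma_m-\sigma_{m'}\|\le C\,d(m,m')^{\theta}$ is self-consistent as soon as $k\cdot\mathrm{Lip}(g)^{\theta}<1$; since $\mathrm{Lip}(g)\le\|dg\|_{C^0}\le\widehat\varsigma^{-1}$ this reads $(\widehat\upsilon/\widehat\varrho)\,\widehat\varsigma^{-\theta}<1$, i.e. exactly $\widehat\upsilon<\widehat\varrho\widehat\varsigma^{\theta}$, the $o(1)$ error being absorbed because the inequality is strict. The usual contraction argument in the Banach space of $\theta$-Hölder sections then produces $\sigma$, hence $E^{cs}_g$, $\theta$-Hölder; the symmetric argument applied to $g^{-1}$ gives $E^{cu}_g$ $\theta$-Hölder under $\upsilon<\varrho\varsigma^{\theta}$, and intersecting gives the theorem.

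The step I expect to be the main obstacle is the circularity built into the graph-transform set-up: the transform is most naturally written in coordinates adapted to the invariant splitting $E^s_g\oplus E^c_g\oplus E^u_g$, but that splitting is only \emph{continuous}, which is too weak to control the base-point dependence of the transform and hence to close a Hölder --- as opposed to merely continuous --- estimate. The remedy, used above, is to work instead with a fixed \emph{smooth} reference splitting $C^0$-close to the invariant one and to absorb the resulting errors both in the fiber-contraction rate $k$ and in the modulus of continuity of $m\mapsto\Psi_m$; this is legitimate precisely because the bunching inequalities are strict, and it is also the only point at which $\mathcal C^2$ is used --- one really only needs $dg$ Lipschitz (or even $\theta$-Hölder) in the base point. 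The other point requiring care is the bookkeeping that identifies the fiber-contraction rate as $\widehat\upsilon/\widehat\varrho$ for $E^{cs}_g$ (respectively $\upsilon/\varrho$ for $E^{cu}_g$) and not something worse; once that is pinned down, the remainder is the standard machinery of the $\mathcal C^{r}$ section theorem.
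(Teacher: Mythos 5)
The paper does not prove this Theorem: it is quoted from Pugh--Shub--Wilkinson's \emph{H\"older foliations, revisited} (the reference [HolFolRev], Section 4), and the authors simply point the reader there. Your proof is a correct, essentially self-contained reconstruction of the standard argument in that source: split $E^c_g=E^{cs}_g\cap E^{cu}_g$, obtain each of $E^{cs}_g$, $E^{cu}_g$ as the attracting fixed section of a graph transform (of $dg^{-1}$, respectively $dg$) written over a smooth reference splitting $C^0$-close to the invariant one, close the $\theta$-H\"older ansatz using that the fiber-contraction rate (about $\widehat\upsilon/\widehat\varrho$ for $E^{cs}_g$, $\upsilon/\varrho$ for $E^{cu}_g$) times $\mathrm{Lip}(g)^\theta$ (respectively $\mathrm{Lip}(g^{-1})^\theta$) is $<1$ in an adapted metric, and then intersect the two uniformly transverse H\"older sub-bundles. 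The identification of these contraction rates, the use of a smooth reference splitting to get a Lipschitz base-point dependence of the transform, and the observation that $C^2$ is only used through Lipschitz continuity of $dg$, all match the cited theorem. One small point worth flagging: the bound $\|dg|_{E^{cs}_g}\|\le\widehat\varrho^{-1}$ uses the inequality $\upsilon\le\widehat\varrho^{-1}$; this holds here since $\upsilon\approx\lambda^{2N}\ll1$ while $\widehat\varrho^{-1}\approx 2N$, but in a general statement one should keep $\max(\upsilon,\widehat\varrho^{-1})$ (and, symmetrically, $\max(\widehat\upsilon,\varrho^{-1})$ on the dual side), which for absolutely partially hyperbolic systems reduces to what you wrote.
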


See Section 4 in \cite{HolFolRev} for a discussion. Applying this theorem and using (\textit{A}), we conclude that for $N$ large enough and then $U$ small enough, for every $g\in U$ the bundle $E^c_g$ is $1/2$-H\"older. Moreover, regarding  $E^c_g$ as a section of the corresponding Grassmanian, its H\"older constant is small when $g$ is close to $f$, since the corresponding to $E^c_f$  is equal to zero.

%Furthermore, since the center bundle depends continuously on the map, the H\"older constant $K_g$ of $E^c_g$ can be taken arbitrarily close to 1 ($=K_f$).

\begin{definition}
A $u$-\emph{curve} for $g$\ is a curve $\ga=(\ga_x,\ga_y,\ga_z,\ga_w)\colon [0,2\pi]\rightarrow M$\ tangent to $E^u_g$\ and such that $|\frac{d\ga_x}{dt}(t)|= 1,\ \forall t\in [0,2\pi] $. For every $k\geq0$ there exists an integer  $N_k=N_k(\ga)$ such that the curve $g^k\circ\ga$\ can be written as a concatenation
\begin{equation}\label{concaug} g^k\circ\ga=\ga_1^k\ast\cdots\ga_{N_k}^k\ast \ga_{N_k+1}^k\end{equation}
where $\ga_j^k\ j=1,\ldots, N_k$\ are $u$-curves and $\ga^k_{N_k+1}$\ is a segment of $u$-curve.
\end{definition}

The unstable bundle depends continuously on the perturbation, and thus we obtain the following analogue to Corollary \ref{partialcoro}, for $U$ small enough.

\begin{corollary}\label{partialcorog}
Let $m\in M$\ and let $v=(v_x,v_y,v_z,v_w)$ be a unit vector in $E^{u}_g(m)$. Then
$$
\lam^{N}(\norm{P_x(e^u)}-3\lam^N)\leq {|v_x|}\leq \lam^{N}(\norm{P_x(e^u)}+3\lam^N).
$$
In particular the length of a $u$-curve is in $[(\lam^{N}(\norm{P_x(e^u)}+3\lam^N))^{-1}, (\lam^{N}(\norm{P_x(e^u)}-3\lam^N))^{-1}]$ and  every pair of $u$-curves $(\gamma,\gamma')$ for $g$ satisfy:
\[0.9 \cdot  Leb(\gamma)\le  Leb(\gamma')\le 1.1 \cdot  Leb(\gamma).\]
\end{corollary}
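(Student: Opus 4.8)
The plan is to transfer the estimate of Corollary \ref{partialcoro} from $f=f_N$ to any $g\in U$ by continuity of the unstable bundle under $C^1$-perturbation, then deduce the length and comparison statements as purely geometric consequences. First I would invoke the Hirsch--Pugh--Shub machinery (Corollary \ref{dc}) to know that for $g$ in a $C^2$-neighborhood $U$ of $f$ the bundle $E^u_g$ is one-dimensional and varies continuously with $g$ in the $C^0$-topology on sections of the Grassmannian; in particular, shrinking $U$ if necessary, for every $m\in M$ and every unit $v\in E^u_g(m)$ the vector $v$ is within, say, $\lambda^{2N}$ of a unit vector in $E^u_f(m)$. By Corollary \ref{partialcoro} applied to $f$, a unit vector $w\in E^u_f(m)$ has $|w_x|\in[\lambda^N(\norm{P_x(e^u)}-2\lambda^N),\ \lambda^N(\norm{P_x(e^u)}+2\lambda^N)]$ (here I use the slightly sharper $2\lambda^N$ from the statement's proof, leaving room for the perturbation). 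Adding the error $|v_x-w_x|\le\norm{v-w}\le\lambda^{2N}\le\lambda^N\cdot\lambda^N$ and using the triangle inequality gives $|v_x|\in[\lambda^N(\norm{P_x(e^u)}-3\lambda^N),\ \lambda^N(\norm{P_x(e^u)}+3\lambda^N)]$, which is exactly the claimed bound; recall $\norm{P_x(e^u)}>0$ because $A$ is hyperbolic, so for $N$ large these bounds are positive and two-sided.

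Next I would derive the length estimate. A $u$-curve $\gamma$ for $g$ satisfies $|\tfrac{d\gamma_x}{dt}(t)|=1$ by definition while $\tfrac{d\gamma}{dt}(t)$ is a unit multiple of a vector in $E^u_g(\gamma(t))$; more precisely, writing $\tfrac{d\gamma}{dt}(t)=c(t)\,v(t)$ with $v(t)$ unit in $E^u_g$, the condition $|c(t)\,v_x(t)|=1$ forces $|c(t)|=1/|v_x(t)|$, hence by the first part $\norm{\tfrac{d\gamma}{dt}(t)}=|c(t)|\in[(\lambda^N(\norm{P_x(e^u)}+3\lambda^N))^{-1},\ (\lambda^N(\norm{P_x(e^u)}-3\lambda^N))^{-1}]$. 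Integrating over $[0,2\pi]$ — whose length cancels against the $2\pi$ normalization implicit in the definition, or rather: the length $\mathrm{Leb}(\gamma)=\int_0^{2\pi}\norm{\tfrac{d\gamma}{dt}(t)}\,dt$ lies in $2\pi$ times that interval, and one absorbs the constant $2\pi\norm{P_x(e^u)}^{-1}$ — gives that $\mathrm{Leb}(\gamma)$ lies in the stated interval (up to the harmless normalization, matching the bound $\tfrac{2\pi(1+\lambda^N)}{\lambda^N\norm{P_x(e^u)}}$ used for $f$). The comparison $0.9\,\mathrm{Leb}(\gamma)\le\mathrm{Leb}(\gamma')\le 1.1\,\mathrm{Leb}(\gamma)$ for two $u$-curves then follows because the ratio of the endpoints of that interval is $\tfrac{\norm{P_x(e^u)}+3\lambda^N}{\norm{P_x(e^u)}-3\lambda^N}\to 1$ as $N\to\infty$, so for $N$ large it is within $[0.9,1.1]$; here I use $\norm{P_x(e^u)}$ bounded away from $0$ uniformly.

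The only genuinely delicate point is the first transfer step: I must make sure that $U$ can be chosen (depending on the already-fixed large $N$) so small that the $C^0$-distance between $E^u_g$ and $E^u_f$ is at most $\lambda^{2N}$. This is where one leans on Corollary \ref{dc} together with the standard fact (Theorems 7.1–7.2 of \cite{HPS}) that the unstable subbundle of a partially hyperbolic diffeomorphism depends continuously, in the $C^0$-topology on sections, on the diffeomorphism in the $C^1$-topology; since $N$ is fixed before $U$ is chosen, there is no circularity. Everything after that is elementary trigonometry and integration, so I would present it tersely. I would also remark that, by Lemma \ref{fmenosuno}, the analogous statement holds for $E^s_g$, replacing $\mu^{2N}$ by $\lambda^{2N}$ throughout.
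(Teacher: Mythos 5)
Your proof is correct and matches the paper's (implicit) approach: the paper does not give a separate proof for this corollary but simply asserts, immediately before the statement, that it follows from Corollary~\ref{partialcoro} because ``the unstable bundle depends continuously on the perturbation.'' You have filled in exactly that continuity argument, with the correct logical order (fix $N$ first, then shrink $U$ so that $E^u_g$ is $C^0$-close to $E^u_f$), and the derivation of the length and $0.9/1.1$ comparison from the first inequality is the intended elementary computation. One small remark: when you invoke ``the slightly sharper $2\lambda^N$ from the statement's proof,'' this is not literally in the proof of Corollary~\ref{partialcoro} as written (the displayed bounds there are $\pm\lambda^N$ for $|v_x|$ and $1\pm(\lambda^{2N}+\lambda^N\norm{P_x(e^u)})$ for $\norm{v}$); you do have to carry out the quotient expansion and use $\norm{P_x(e^u)}\le 1$ to see that the unit-vector bound for $f$ is actually $\pm\bigl(\lambda^N(1+\norm{P_x(e^u)}^2)+O(\lambda^{2N})\bigr)\le\pm(2\lambda^N+O(\lambda^{2N}))$, which does leave the needed room. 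You also correctly flag the $2\pi$ normalization mismatch in the stated length interval, which is a cosmetic issue in the paper itself and does not affect the $0.9/1.1$ comparison that is actually used in Section~6.
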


\begin{definition}
An adapted field $(\ga,X)$ for $g$\ is the pair of  a $u$-curve $\ga$ and of a unit vector field  $X$ satisfying:
\begin{enumerate}
\item $X$\ is tangent to the center direction $E^c_g$.
\item $X$ is $(C_X,1/2)$-H\"older along $\ga$, with $C_X<20 N^2\lam^{N}$.
\end{enumerate}
\end{definition}

%Note that the concept is not vacuous since as $g$ approaches $f$, the bundle $E^c_g$ approaches the constant bundle $E^c_f$ (uniformly in $U$).

The action of the map $f$ on unstable leaves corresponds almost to the mere multiplication by a constant factor. To deal with the perturbative case we need distortion bounds. Given a map $g\in U$ and an integer $k$ we denote by $J^u_{g^k}$ the \emph{unstable Jacobian} of $g^k$,  namely
$$
J^u_{g^k}(m):=|det(d_mg^k|_{E^u_g})|, \quad \forall m\in M.
$$
Note that from (\textit{A}) follows
\begin{equation}\label{eqjacobiano}
\forall m\in M,\quad \lam^{2N}/1.01\leq J^u_{g^{-1}}(m) \leq \lam^{2N}/0.99
\end{equation}

\begin{lemma}[Distortion bounds]\label{distortion}
There exists a constant $D=D_N$ with the following property. For all $g\in U$ and  $u$-curve $\ga$ for $g$, for every $k\geq 0$, it holds
$$
\forall m,m'\in \ga,\quad  \frac{1}{D}\leq \frac{J^u_{g^{-k}}(m)}{J^u_{g^{-k}}(m')} \leq D.
$$
Furthermore, for $N$ large and for $U$ small, the number $D$ can be taken close to $1$.
\end{lemma}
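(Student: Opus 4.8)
The plan is to follow the classical bounded-distortion argument for one-dimensional uniformly expanding maps, applied to the restriction of $g^{-1}$ to unstable leaves. First I would fix a $u$-curve $\gamma$ for $g$ and, for each $k\ge 0$, use \eqref{concaug} applied to $g^{-1}$ to write $g^{-k}\circ\gamma$ as a concatenation of $u$-curves (plus a leftover segment). Parametrize each leaf by arclength and regard $J^u_{g^{-1}}$ as a positive function on $M$ whose restriction to any $u$-curve is, by (\textit{B}) and Corollary \ref{partialcorog}, Lipschitz with a constant of order $N$ times the leaf length, hence of order $N\lambda^{N}$ — this is the crucial smallness. The logarithmic derivative along the leaf therefore satisfies $\bigl|\frac{d}{dt}\log J^u_{g^{-1}}\bigr|\le K_N$ with $K_N$ uniformly bounded in $U$ and, in fact, small when $N$ is large.

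Next I would telescope: by the chain rule,
\begin{equation*}
\log\frac{J^u_{g^{-k}}(m)}{J^u_{g^{-k}}(m')}=\sum_{i=0}^{k-1}\Bigl(\log J^u_{g^{-1}}(g^{-i}m)-\log J^u_{g^{-1}}(g^{-i}m')\Bigr).
\end{equation*}
For each $i$, the two points $g^{-i}m,g^{-i}m'$ lie on a common $u$-curve (a piece of one of the concatenation factors of $g^{-i}\gamma$), and by \eqref{eqjacobiano} the map $g^{-1}$ contracts leaf-length by a factor at most $\lambda^{2N}/0.99<1$; iterating, $d(g^{-i}m,g^{-i}m')\le C\,(\lambda^{2N}/0.99)^{i}\,|\gamma|$ for a uniform $C$ coming from Corollary \ref{partialcorog}. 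Combining with the Lipschitz bound on $\log J^u_{g^{-1}}$ along leaves gives
\begin{equation*}
\Bigl|\log\frac{J^u_{g^{-k}}(m)}{J^u_{g^{-k}}(m')}\Bigr|\le K_N\,C\,|\gamma|\sum_{i\ge 0}\Bigl(\tfrac{\lambda^{2N}}{0.99}\Bigr)^{i}=:\log D_N,
\end{equation*}
a geometric series that is finite and, since $K_N|\gamma|=O(N\lambda^{N})$ while the ratio of the series is bounded away from $1$, tends to $0$ as $N\to\infty$; hence $D_N\to 1$. Taking exponentials yields the two-sided bound with the same $D_N$ for all $g\in U$, all $u$-curves, and all $k$.

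The one genuinely delicate point is the very first step: establishing that $\log J^u_{g^{-1}}$ is uniformly Lipschitz along $u$-curves with a constant that is \emph{small relative to the contraction gap}. This requires that $E^u_g$ — not merely $dg$ — varies in a controlled ($C^1$ or at least Lipschitz) way along unstable leaves; for $g=f_N$ this follows from the explicit formula for $\alpha_m$ in Proposition \ref{partialhyp} together with \eqref{normacentro}, and for $g\in U$ from $C^1$-persistence of the unstable bundle and (\textit{B}). Once that Lipschitz estimate is in hand with the right order of magnitude, the telescoping sum converges geometrically and the ``$D$ close to $1$'' refinement is automatic; so I expect the bulk of the work to be bookkeeping the constants in that Lipschitz bound rather than any conceptual obstacle.
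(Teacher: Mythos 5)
Your proposal follows exactly the paper's route: telescope $\log J^u_{g^{-k}}$, use the uniform contraction of $g^{-1}$ along unstable leaves (eq.~\eqref{eqjacobiano}) to make consecutive distances decay geometrically, feed a Lipschitz bound on the one-step log-Jacobian into a geometric series, and observe that the resulting distortion constant collapses to $1$ as $N\to\infty$ because the Lipschitz constant is small relative to the inverse leaf length. The paper packages the log difference as $\frac{1.01}{\lambda^{2N}}\sum_j |\det d_{m_j}g^{-1}|_{E^u}-\det d_{m'_j}g^{-1}|_{E^u}|$ with Lipschitz constant $C_0$ rather than via $\frac{d}{dt}\log J^u_{g^{-1}}$, but that is cosmetic.

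The one thing to fix is the order of magnitude you assign to the Lipschitz constant. You first assert that $J^u_{g^{-1}}$ is Lipschitz along a $u$-curve with constant of order $N\lambda^{N}$, and later that $K_N|\gamma|=O(N\lambda^N)$; these are inconsistent (since $|\gamma|\sim\lambda^{-N}$, the first gives $K_N|\gamma|\sim N$, which does \emph{not} go to $0$ and so would not yield $D_N\to 1$). The quantity you actually need small is $K_N|\gamma|$ where $K_N$ is the Lipschitz constant of $\log J^u_{g^{-1}}$ along unstable leaves; because for $f$ one has $J^u_{f^{-1}}=\lambda^{2N}\,\|(\alpha_{f^{-1}m},e^u)\|/\|(\alpha_m,e^u)\|$ with $\|\alpha_m\|=O(\lambda^N)$ and $\partial_u\alpha_m=O(N\lambda^{3N})$, the true value of $K_N$ is of order $N\lambda^{4N}$ (not $N\lambda^N$), and $K_N|\gamma|=O(N\lambda^{3N})\to 0$. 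So the conclusion holds, but your stated estimate, taken literally, would not suffice; a sharper bound on $\partial_u\alpha_m$ (equivalently, on the paper's $C_0$), uniform over $U$, is genuinely required, exactly as you flag in your last paragraph.
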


This Lemma is classical. We present the proof nonetheless, since the version given is adapted to our purposes.

\begin{proof}
Fix $g\in U$ and $u$-curve $\ga$  for $g$. For $m,m'\in \ga$ and an integer $j$, we denote by $m_j=g^j(m)$, $m'_j=g^j(m')$. We compute
\begin{equation*}
\begin{split}
\log\frac{J^u_{g^{-k}}(m)}{J^u_{g^{-k}}(m')}&=\sum_{j=-k+1}^0 \log\left|\frac{det(d_{m_j}g^{-1}|_{E^u_g})}{det(d_{m'_j}g^{-1}|_{E^u_g})}\right|\leq \sum_{j=-k+1}^0 \left|\frac{det(d_{m_j}g^{-1}|_{E^u_g})-det(d_{m'_j}g^{-1}|_{E^u_g})}{det(d_{m'_j}g^{-1}|_{E^u_g})}\right|\\
%&=\sum_{j=-k}^0 \log\left|1+\frac{det(d_{m_j}g^{-1}|_{E^u_g})-det(d_{m'_j}g^{-1}|_{E^u_g})}{det(d_{m'_j}g^{-1}|_{E^u_g})}\right|\\
%&\leq \sum_{j=-k}^0 \left|\frac{det(d_{m_j}g^{-1}|_{E^u_g})-det(d_{m'_j}g^{-1}|_{E^u_g})}{det(d_{m'_j}g^{-1}|_{E^u_g})}\right| \\
%\quad (\text{by concavity of }\log)\\
&\leq \frac{1.01}{\lam^{2N}}\sum_{j=-k+1}^0 \left|det(d_{m_j}g^{-1}|_{E^u_g})-det(d_{m'_j}g^{-1}|_{E^u_g})\right| \quad (\text{by equation \eqref{eqjacobiano}})\\
&\leq \frac{1.01}{\lam^{2N}} \sum_{j=-k+1}^0 C_0d(m_j,m'_j) \leq \frac{1.01C_0}{\lam^{2N}}\sum_{j=-k+1}^0 \Big(\frac{\lam^{2N}}{0.99}\Big)^{|j|}d(m_0,m'_0)
\end{split}
\end{equation*}
where $C_0$ is the Lipschitz constant of $\det dg|_{E^u}$ restricted to $\gamma$. Note that $C_0$ is small when $N$ is large and $g$ is $\mathcal{C}^2$-close to $f$. Altogether we conclude
$$
\log\frac{J^u_{g^{-k}}(m)}{J^u_{g^{-k}}(m')}\leq \frac{1.01C_0}{\lam^{2N}}\frac1{1-\frac{\lam^{2N}}{0.99}}d(m_0,m'_0)
$$
and since the length of $u$-curves is bounded from above the claim follows.
\end{proof}

From this Lemma and using the change of variables Theorem we obtain:

\begin{corollary}\label{corodistbound}
For all $g\in U$ and $u$-curve $\ga$  for $g$, for every measurable set $A\subset \ga$, for every $k\geq 0$ it holds
$$
\frac1{D}\frac{Leb(A)}{Leb(\ga)}\leq \frac{Leb(g^{-k}A)}{Leb(g^{-k}\ga)}\leq D\frac{Leb(A)}{Leb(\ga)}.
$$
\end{corollary}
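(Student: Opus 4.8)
The plan is to reduce the statement to the distortion Lemma~\ref{distortion} via the one–dimensional change of variables formula along the curve $\ga$. First I would record that, since $\ga$ is a $u$-curve it is tangent to the one–dimensional bundle $E^u_g$, and because $E^u_g$ is $dg$-invariant, $g^{-k}\ga$ is again tangent to $E^u_g$. Hence the derivative of $g^{-k}$ restricted to the tangent line of $\ga$ at a point $m$ coincides with $d_mg^{-k}|_{E^u_g}$, whose modulus of determinant is exactly $J^u_{g^{-k}}(m)$. The change of variables theorem then gives, for every measurable $A\subset\ga$,
\[Leb(g^{-k}A)=\int_A J^u_{g^{-k}}(m)\,d\ga(m),\qquad\text{and in particular}\qquad Leb(g^{-k}\ga)=\int_\ga J^u_{g^{-k}}(m)\,d\ga(m).\]

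The second step is to bound these integrals crudely by the supremum and infimum of the integrand over $\ga$: one has $Leb(g^{-k}A)\le (\sup_\ga J^u_{g^{-k}})\,Leb(A)$ and $Leb(g^{-k}\ga)\ge(\inf_\ga J^u_{g^{-k}})\,Leb(\ga)$. Dividing and invoking Lemma~\ref{distortion}, which says $\sup_\ga J^u_{g^{-k}}/\inf_\ga J^u_{g^{-k}}\le D$, yields the upper inequality $Leb(g^{-k}A)/Leb(g^{-k}\ga)\le D\,Leb(A)/Leb(\ga)$. The lower inequality is proved symmetrically, bounding $Leb(g^{-k}A)$ from below by $(\inf_\ga J^u_{g^{-k}})\,Leb(A)$ and $Leb(g^{-k}\ga)$ from above by $(\sup_\ga J^u_{g^{-k}})\,Leb(\ga)$, and again using Lemma~\ref{distortion}.

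I do not expect a genuine obstacle here: the only point needing a line of justification is that the Jacobian appearing in the change of variables on the immersed curve is precisely $J^u_{g^{-k}}$ rather than some other quantity, and this is immediate from the $dg$-invariance of $E^u_g$ together with $\dim E^u_g=1$. All the real content of the corollary is already contained in the distortion estimate of Lemma~\ref{distortion}, so this is a routine deduction.
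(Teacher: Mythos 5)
Your proof is correct and is essentially the argument the paper has in mind: the paper simply states that the corollary follows from Lemma~\ref{distortion} ``and using the change of variables Theorem,'' and your write-up is precisely that deduction, with the right identification of the one-dimensional Jacobian along the curve as $J^u_{g^{-k}}$ and the right crude $\sup/\inf$ bound giving the factor $D$ (not $D^2$). No discrepancy with the paper's route.
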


The proof of the existence of the NUH physical measure for the maps $g\in U$ is obtained by analyzing the quantity

\begin{equation*}
I_n^{\ga,X}=\frac{1}{|\ga|}\int_{\ga} \log\norm{d_{m}g^n(X)}d\ga
\end{equation*}
where  $(\ga,X)$\ is an adapted field for $g$.

\begin{proposition}\label{ergog}
Suppose that there exists $C>0$ with the following property: for every $u$-curve $\ga$, there exists a vector field $X$  such that $(\gamma, X)$ is an adapted field for $g$ and such that for all large $n\geq 0$\
$$
\frac{I_n^{\ga,X}}{n}>C.
$$
Then the map $g$\ has a positive Lyapunov exponent greater than $C/2$ in the center direction at Lebesgue almost every point.
%Then at Lebesgue almost every point $m\in M$, there exists a vector $v\in E^c_g(m)$ such that:
%\begin{equation}\label{limsup}\lim_{n\rightarrow +\infty} \frac1n \log\|df^n(v)\|\ge C/2D.\end{equation}
%%the map $g$\ has a positive Lyapunov exponent greater than $C/2D$ in the center direction .
\end{proposition}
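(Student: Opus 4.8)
The plan is to mirror the proof of Proposition \ref{ergo} almost verbatim, replacing the exact multiplicative behavior of $f$ on unstable leaves by the distortion bounds of Lemma \ref{distortion} and Corollary \ref{corodistbound}. As there, I would argue by contradiction: let $B$ be the set of regular points at which $g$ does not have a positive central Lyapunov exponent exceeding $C/2$, assume $\mathrm{Leb}(B)>0$, and use absolute continuity of the strong unstable foliation of $g$ (which holds since $g$ is partially hyperbolic and $\mathcal{C}^2$) together with Fubini to produce an unstable leaf $\gamma$ meeting $B$ in a set of positive $1$-dimensional Lebesgue measure; pick a density point $b$ of $\gamma\cap B$ inside $\gamma$.

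Next I would take small arcs $\gamma^\epsilon\subset\gamma$ around $b$ of length $\epsilon$, so that $\mathrm{Leb}(\gamma^\epsilon\cap B^c)/\mathrm{Leb}(\gamma^\epsilon)\to 0$. The key difference from the $f$ case is that $g^k\gamma^\epsilon$ need not have a length that is a clean multiple of a $u$-curve; instead, for a suitable sequence of $k$ and $\epsilon=\epsilon_k$ one arranges by Corollary \ref{partialcorog} that $g^k\gamma^\epsilon$ is (comparable to) a $u$-curve for $g$, with $\mathrm{Leb}(g^k\gamma^\epsilon\cap B^c)$ still a small fraction of $\mathrm{Leb}(g^k\gamma^\epsilon)$ — here one invokes Corollary \ref{corodistbound} to transfer the smallness of the $B^c$-proportion from $\gamma^\epsilon$ to $g^k\gamma^\epsilon$ at cost of the bounded factor $D$, which by the last sentence of Lemma \ref{distortion} can be taken arbitrarily close to $1$ for $N$ large and $U$ small. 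On $g^k\gamma^\epsilon$ we pick, by hypothesis, a field making $(g^k\gamma^\epsilon, (g^k)_*X_\epsilon)$ an adapted field with $I_n/n>C$ for large $n$, pull $X_\epsilon$ back to $\gamma^\epsilon$, and set $\chi(m)=\limsup_n \frac1n\log\|d_mg^n(X_\epsilon)\|$, noting $\chi(m)<C/2$ on $B$ and $\chi\le\log 2N$ everywhere by (\textit{A}).

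The final step is the same two-sided estimate on $\int_{\gamma^\epsilon}\chi\,d\gamma^\epsilon$ as in Proposition \ref{ergo}: a lower bound coming from the hypothesis $I_n/n>C$ on $g^k\gamma^\epsilon$, transported back by the change of variables along $g^k$ (so with a factor comparable to the reciprocal unstable Jacobian, controlled by equation \eqref{eqjacobiano} up to the constants $0.99,1.01$), giving something like $\ge C(1-o(1))\,\mathrm{Leb}(\gamma^\epsilon)$; and an upper bound by splitting $\gamma^\epsilon=(\gamma^\epsilon\cap B)\cup(\gamma^\epsilon\cap B^c)$, which yields $\le \frac{C}{2}\mathrm{Leb}(\gamma^\epsilon)+\log(2N)\,\mathrm{Leb}(\gamma^\epsilon\cap B^c)<\frac34 C\,\mathrm{Leb}(\gamma^\epsilon)$ once the $B^c$-proportion is small enough relative to $C/\log(2N)$. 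These contradict each other, so $\mathrm{Leb}(B)=0$. The main obstacle, and the only place real care is needed, is the bookkeeping in the middle step: choosing $\epsilon_k$ and handling the fact that for a perturbation $g$ the images $g^k\gamma^\epsilon$ are only approximately $u$-curves of approximately constant speed, so one must combine Corollary \ref{partialcorog} (uniform length and speed bounds for $u$-curves of $g$) with Corollary \ref{corodistbound} (distortion control) to keep all the error factors — the $1\pm o(1)$'s replacing the clean $1+\epsilon_k$ of the unperturbed computation — uniformly close to $1$; everything else is a routine transcription of the proof of Proposition \ref{ergo}.
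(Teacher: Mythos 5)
Your strategy --- argue by contradiction, use absolute continuity of $\mathcal{F}^u$ and Fubini to produce an unstable leaf meeting $B$ with positive linear measure, apply the hypothesis along a pushed-forward $u$-curve, and pull the estimate back with a change of variables --- is exactly the paper's, and your two-sided estimate on $\int_{\gamma^\epsilon}\chi\,d\gamma^\epsilon$ is the right shape. Two points, however, need more care.

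First, the construction of $\gamma^\epsilon$. You take a small arc around $b$ and choose $\epsilon_k$ so that $g^k\gamma^\epsilon$ is ``comparable to'' a $u$-curve; but the hypothesis is stated for \emph{exact} $u$-curves of $g$, and the adapted field $X$ it provides lives on such an exact curve, so applying it to an only-approximate $u$-curve is not immediate. The paper reverses the order: it takes $\beta_k$ to be the $u$-curve for $g$ with $\beta_k(0)=g^k(b)$ and defines $\gamma^\epsilon:=g^{-k}\beta_k$, so that $g^k\gamma^\epsilon=\beta_k$ is an exact $u$-curve by construction. The cost --- $\gamma^\epsilon$ is not symmetric about $b$ --- is absorbed by Lemma \ref{distortion}: the two subarcs of $\gamma^\epsilon$ on either side of $b$ have lengths within a factor $D$ of one another, so the density-point argument still gives $Leb(\gamma^\epsilon\cap B^c)/Leb(\gamma^\epsilon)$ small.

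Second, and more seriously, the lower bound. You cite equation \eqref{eqjacobiano} (the one-step Jacobian bound) as controlling the change-of-variables factor. Iterated $k$ times this gives only $J^u_{g^{-k}}\in[(\lambda^{2N}/1.01)^k,(\lambda^{2N}/0.99)^k]$, whose spread $(1.01/0.99)^k$ is unbounded; plugged into $\int_{\gamma^\epsilon}\chi\,d\gamma^\epsilon=\int_{\beta_k}(\chi\circ g^{-k})\,J^u_{g^{-k}}\,d\beta_k$ this would yield a lower bound of order $C\,(0.99/1.01)^k\,Leb(\gamma^\epsilon)$, which tends to $0$ as $k\to\infty$ and proves nothing. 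The step that must be used here is the $k$-\emph{uniform} distortion bound of Lemma \ref{distortion}: for any $m^k\in\beta_k$ one has $J^u_{g^{-k}}(m^k)\geq Leb(\gamma^\epsilon)/(D\,Leb(\beta_k))$, giving $\int_{\gamma^\epsilon}\chi\,d\gamma^\epsilon\geq (C/D)\,Leb(\gamma^\epsilon)$ with $D$ close to $1$ independently of $k$. You do announce Lemma \ref{distortion} at the outset as the replacement for the exact multiplicativity of $f$, so this may merely be a slip in the write-up, but as written the crux of the lower bound is pinned to an inequality that cannot do the job.
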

\begin{proof}
We refer the reader to the proof of Proposition \ref{ergo} since the arguments are completely analogous. 
Consider the set $B$\ of regular points for which the map $g$\ does not have a positive Lyapunov exponent greater than $C/2$ in the center direction. By absolute continuity of the unstable foliation and Fubini's theorem, there exists an unstable manifold $\gamma$ which intersects $B$ at a  subset of positive $1$-Lebesgue measure. Let $b\in \gamma$ be a density point of such a subset.
% and a   and let us assume for the sake of contradiction that there exists a point $b\in B$ which is density point of $B$ and of the unstable curve where is contained.
%We indicate only the necessary modifications. Let $B$ be the set of points $m\in M$ for which there are none vectors $v$ such that (\ref{limsup}) holds. Assume for the sake of contradiction that there is a density point $b\in B$. 

Choose the curve $\ga^{\ep}:[-\ep,\ep]\rightarrow M$\ as
$$
\ga^{\ep}(t)=g^{-k}\circ \be_{k}(t)
$$
where $\be_{k}$\ is the $u$-curve for $g$\ satisfying $\be_{k}(0)=g^k(b)$ , and observe that the length $Leb(\ga^{\ep})$\ is small for $k$ large, although this curve is possibly not symmetric around $b$. If we consider the left and right segments of $\ga^{\ep}$ with common boundary $b$, we see that by the distortion bounds Lemma that their quotient is close to 1. With this and the absolute continuity of $\mathcal F^u$, as $b$ is density point of $B$, we can assume that for $\gamma^\epsilon$ small enough (or equivalently $k$ sufficiently large)
$$\frac{Leb(\ga^{\ep}\cap B^c)}{Leb(\ga^{\ep})}<\frac{C}{2D}.$$

Using again distortion bounds,  we can write for any point $m^k\in g^k\ga^{\ep}$\
 \begin{equation*}\label{distappcurves}J^u_{g^{-k}}(m^k)\ge \frac{Leb(\gamma^\epsilon)}{D Leb(g^k\gamma^\epsilon)}.\end{equation*}
Let $\chi(m)=\limsup_n \frac1n \log\|dg^n \circ X\circ g^k(m)\|$, for $m\in \gamma^\epsilon$ and with $X$ given by the Proposition hypothesis.
\begin{equation*}\begin{split}
\int_{\ga^{\ep}}\chi d\ga^{\ep}=&\int_{g^k\ga^{\ep}}\chi\circ g^{-k} J^u_{g^{-k}}d(g^k\ga^{\ep})\ge \frac{Leb(\gamma^\epsilon)}{D Leb(g^k\gamma^\epsilon)} \int_{g^k\ga^{\ep}} \chi\circ g^{-k} d(g^k\ga^{\ep})\\
&\ge \frac{Leb(\gamma^\epsilon)}{D Leb(g^k\gamma^\epsilon)} C\cdot Leb(g^k\ga^{\ep})\ge\frac{C}{D}Leb(\ga^{\ep}).
\end{split}
\end{equation*}
The rest of the proof is identical to Proposition \ref{ergo}.
\end{proof}

We proceed as we did in the case of $f$ and study the integral
$$
E(\ga,X):=\frac1{|\gamma|}\int_{\ga} \log \norm{d_mg(X)}d\ga
$$
where $(\ga,X)$ is an adapted field for $g$.

Denote by $\pi\colon \Real^4\rightarrow \Real^2\equiv E^c_f$\ the canonical projection, and for a vector field $X$ on a curve $\gamma$, let $$\widetilde{X_m}:=\frac{\pi(X_m)}{\norm{\pi(X_m)}}\quad\forall m\in \ga.$$

\begin{definition}
An adapted field $(\ga,X)$\ for $g$\ is good\footnote{We keep the definitions of $\mathit{Crit},\Delta$ given in Section 4.} if for every $m\in \ga,\ \widetilde{X}_m\in \Delta$.
\end{definition}

\begin{proposition}\label{Boundsg}
For $N$ large and $U$ small, for all $g\in U$ and $(\ga,X)$ adapted field  for $g$, it holds:
\begin{enumerate}
\item $E(\ga,X)\geq -\log{2N}.$
\item If  $(\ga,X)$\ is good then $E(\ga,X) \geq\frac{1}{7}\log{N}.$
\end{enumerate}
\end{proposition}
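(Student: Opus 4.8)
\textbf{Proof proposal for Proposition \ref{Boundsg}.}

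The plan is to mimic the proof of Proposition \ref{Bounds}, controlling the errors introduced by the perturbation $g$ of $f$ in terms of the $C^2$-size of $g-f$ and the size of $N$. First I would set up the relevant bookkeeping: for $m\in M$ let $\Omega(m)$ and the basis $s_m,u_m$ of $E^c_f$ be as in Section \ref{Sect.4}, and for $g\in U$ write $X_m = c(m)\,\widehat{X}_m + r(m)$ where $\widehat{X}_m\in E^c_f$ is the normalization of $\pi(X_m)$ and $r(m)\in E^c_f{}^\perp$ is a small vector: since $E^c_g$ is $C^0$-close to $E^c_f$ when $g$ is close to $f$ (Corollary \ref{dc} together with the continuous dependence of the bundles), the component $r(m)$ has norm bounded by a quantity $\delta(U)$ which tends to $0$ as $U$ shrinks. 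Consequently $\|d_mg(X_m) - d_mf(\widehat X_m)\|$ is bounded by $\|d_mg - d_mf\|$ (which is $\le \delta(U)$) plus $\|d_mf\|\cdot\|X_m-\widehat X_m\|\le 2N\,\delta(U)$ by \eqref{normacentro}; hence $\|d_mg(X_m)\|$ differs from $\|d_mf(\widehat X_m)\|$ by at most $3N\delta(U)$, a quantity we can make much smaller than $1$ after fixing $N$ and then choosing $U$ small.

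For item (1), this is essentially immediate: by hypothesis (\textit{A}) we have $\|dg^{-1}|E^c_g\|\le 2N$, so $\|d_mg(X_m)\|\ge \tfrac1{2N}$ pointwise, giving $E(\ga,X)\ge -\log 2N$ directly — no perturbation estimate is even needed beyond (\textit{A}).

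For item (2), I would reproduce the strip decomposition of Section \ref{Sect.4}. Goodness of $(\ga,X)$ for $g$ means $\widetilde X_m = \widehat X_m\in\Delta$ for all $m\in\ga$. Off the critical strip $\mathit{Crit}$, the computation leading to \eqref{normdif}, applied to $\widehat X_m\in E^c_f$, gives $\|d_mf(\widehat X_m)\|\ge N|\sin\theta^{\widehat X}(m)||\cos x| - 2$; the analogue of Lemma \ref{Xgood} (whose proof only uses that $\widehat X_m\in\Delta$ and $m\notin\mathit{Crit}$, and a bound on $\angle(s_m,(0,1))=O(1/\sqrt N)$) yields $|\sin\theta^{\widehat X}(m)|\ge N^{-1/3}$, hence $\|d_mf(\widehat X_m)\|\ge N^{1/6}-2$ exactly as in \eqref{fgrand}. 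Transporting this to $g$ via the estimate of the first paragraph, $\|d_mg(X_m)\|\ge N^{1/6}-2-3N\delta(U)\ge N^{1/6}-3$ for $U$ small. Then, splitting $\ga$ into its part outside $\mathit{Crit}$ (of relative length $\ge 1 - 8/\sqrt N$ up to the $O(\lambda^{2N})$ distortion of $u$-curves, which for $g$ is controlled by Corollary \ref{partialcorog}) and its part inside $\mathit{Crit}$ (where we only use item (1)'s bound $-\log 2N$), the same arithmetic as in the proof of Proposition \ref{Bounds} gives
\[
|\ga|\cdot E(\ga,X)\ \ge\ \Big(1-\tfrac{9}{\sqrt N}\Big)\big(\tfrac16\log N - 3\big)|\ga|\ -\ \tfrac{9}{\sqrt N}\log(2N)\,|\ga|\ \ge\ \tfrac17\log N\,|\ga|
\]
for $N$ large and $U$ correspondingly small.

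The main obstacle, and the only genuinely new point compared to the unperturbed case, is the first paragraph: making precise that ``$E^c_g$ close to $E^c_f$'' is quantitatively strong enough that the tilt $r(m)$ of $X_m$ away from $E^c_f$, after being hit by $d_mg$ (which has norm up to $2N$), still produces an error negligible compared with the gain $N^{1/6}$ coming from goodness. This works only because we are allowed to fix $N$ first — so that $N^{1/6}$, $2N$, $\log 2N$ are all fixed constants — and then shrink $U$ depending on $N$; the $1/2$-H\"older control (\textit{C}) on $E^c_g$ with small H\"older constant is what guarantees $\delta(U)\to 0$. Everything else is a routine re-run of Lemmas \ref{Xnobad}, \ref{Xgood} and the proof of Proposition \ref{Bounds}, with $\mathbf s_N$-estimates replaced by hypotheses (\textit{A})–(\textit{B}) and Corollary \ref{partialcorog}.
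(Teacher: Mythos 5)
Your argument matches the paper's own proof essentially step-for-step: item (1) follows from hypothesis (\textit{A}) alone, and item (2) carries the bound \eqref{fgrand} for $\widehat X_m=\widetilde X_m\in E^c_f$ over to $g$ via continuous dependence of the center bundle, yielding $\|d_mg(X_m)\|\geq N^{1/6}-3$ off $\mathit{Crit}$, after which the integration is identical to Proposition \ref{Bounds}. One small slip: the inequality $\|d_mf\|\cdot\|X_m-\widehat X_m\|\leq 2N\,\delta(U)$ is not justified by \eqref{normacentro}, since $X_m-\widehat X_m$ has components off $E^c_f$ and the full operator norm of $d_mf$ is of order $\mu^{2N}$, not $2N$; this is harmless here because $N$ is fixed before $U$ is shrunk, so one should simply write $\|d_mf\|\,\delta(U)$ with $\|d_mf\|$ an $N$-dependent constant.
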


\begin{proof}
The first part follows directly from property (\textit{A}). For the second part, equation \eqref{fgrand} implies that for every $m\in \mathit{Crit}^c\cap\ga$ the norm $\norm{d_mf(\widetilde{X}_m)}\geq \sqrt[6]{N}-2$. Since the center bundle depends continuously with respect to the map, if $U$ is sufficiently small,
$X_m$ is close to $\widetilde{X}_m$ and $d_mg(X_m)$ is close to  $d_mf(\widetilde{X}_m)$.  
%$$\forall m\in \ga, \quad\norm{X_m-\widetilde{X}_m}\leq \lam^{2N}.$$
 Thus for every $m\in \mathit{Crit}^c$\
$$
\norm{d_mg(X_m)}\geq \norm{d_mf(\tilde X_m)}-1\geq \sqrt[6]{N}-3.
$$
The rest of the proof is completely analogous to Proposition \ref{Bounds}.
\end{proof}

Let $(\ga,X)$ be an adapted field for a map $g\in U$.  For $k\geq0$ consider the pairs $(\ga_j^k,Y^k|\ga_j^k)$, $1\leq j\leq N_k$\ with $Y^k=\frac{g^k_{\ast}X}{\norm{g^k_{\ast}X}}$.

In the same fashion as in Lemma \ref{adapted}, we have:
\begin{lemma}
Every possible pair $(\ga_j^k,Y^k|\ga_j^k)$\ is an adapted field.
\end{lemma}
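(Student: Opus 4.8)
The plan is to transplant the proof of Lemma~\ref{adapted} to the perturbative setting, using the quantitative control on the perturbation provided by conditions (\textit{A}), (\textit{B}), (\textit{C}) in place of the exact formulas available for $f=f_N$. As before, by induction on $k$ it suffices to treat $k=1$: if $(\ga,X)$ is an adapted field for $g$ and $Y=Y^1=\frac{g_\ast X}{\norm{g_\ast X}}$, we must check that $(\ga^1_j,Y|\ga^1_j)$ satisfies both requirements of an adapted field for $g$. The first requirement ($Y$ tangent to $E^c_g$) is immediate since $E^c_g$ is $dg$-invariant and $X$ is tangent to $E^c_g$. So the whole content is the H\"older estimate: $Y$ is $(C_Y,1/2)$-H\"older along $\ga^1_j$ with $C_Y<20N^2\lam^N$.

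For the H\"older bound I would follow the two-step decomposition used in Lemma~\ref{adapted}. First, for $m,m'$ on a $u$-curve, write
\[
\norm{d_mg(X_m)-d_{m'}g(X_{m'})}\le \norm{d_mg(X_m)-d_mg(X_{m'})}+\norm{d_mg(X_{m'})-d_{m'}g(X_{m'})}.
\]
The first term is bounded by $\norm{d_mg}\cdot\norm{X_m-X_{m'}}\le 2N\, C_X d(m,m')^{1/2}$ using (\textit{A}) (bound on $\norm{dg|_{E^c}}$, but really just $\norm{dg}\le 2N$, which follows from (\textit{A}) applied in each invariant direction together with the near-orthogonality of the splitting — alternatively one absorbs this in the choice of $U$) and the H\"older hypothesis on $X$. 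The second term is controlled by the $\mathcal C^2$-bound (\textit{B}): the map $m\mapsto d_mg$ is $2N$-Lipschitz, so $\norm{d_mg(X_{m'})-d_{m'}g(X_{m'})}\le 2N\, d(m,m')$, which is $\le 2N\, d(m,m')^{1/2}$ when $d(m,m')\le 1$ and $\le 2N\le 2N\,d(m,m')^{1/2}$ otherwise (using that $u$-curves have length $O(\lam^{-N})$, so the relevant distances are bounded — in fact one gets the cruder bound $\norm{d_mg(X_{m'})-d_{m'}g(X_{m'})}\le 4N\cdot(\text{length bound})^{1/2}$ as in the proof of \eqref{holder}). Combining, $\norm{d_mg(X_m)-d_{m'}g(X_{m'})}\le (4N+2NC_X)d(m,m')^{1/2}$ for $m,m'\in M$, exactly as in \eqref{holder}.

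Second, I would pass from $g_\ast X$ to its normalization $Y$ by the same algebraic manipulation as in Lemma~\ref{adapted}: writing $Y_m-Y_{m'}$ in terms of $g_\ast X_m$, $g_\ast X_{m'}$ and their norms, one obtains
\[
\norm{Y_m-Y_{m'}}\le \frac{2}{\norm{g_\ast X_m}}\norm{g_\ast X_m-g_\ast X_{m'}}
=\frac{2}{\norm{g_\ast X_m}}\norm{d_{g^{-1}m}g(X_{g^{-1}m})-d_{g^{-1}m'}g(X_{g^{-1}m'})}.
\]
Now apply the bound from the first step at the points $g^{-1}m,g^{-1}m'$, using that $g^{-1}$ contracts $u$-curves: by Corollary~\ref{partialcorog} (or directly (\textit{A})), $d(g^{-1}m,g^{-1}m')\le 1.01\lam^{2N}d(m,m')$ for $m,m'$ on a $u$-curve, so $d(g^{-1}m,g^{-1}m')^{1/2}\le 1.1\lam^{N}d(m,m')^{1/2}$. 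Also $\norm{g_\ast X_m}=\norm{d_{g^{-1}m}g(X_{g^{-1}m})}\ge \frac1{2N}$ by (\textit{A}). Hence
\[
\norm{Y_m-Y_{m'}}\le 2\cdot 2N\cdot(4N+2NC_X)\cdot 1.1\lam^N\, d(m,m')^{1/2}\le 8.8N^2(2+C_X)\lam^N d(m,m')^{1/2},
\]
and since $C_X<20N^2\lam^N$ is small, for $N$ large and $U$ small this is $<20N^2\lam^N d(m,m')^{1/2}$, giving $C_Y<20N^2\lam^N$ as required. The induction step $k\to k+1$ is identical, applied to the adapted field $(\ga^k_j,Y^k|\ga^k_j)$. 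The main obstacle — such as it is — is purely bookkeeping: one must make sure that the perturbative constants ($\norm{dg}$, the Lipschitz constant of $m\mapsto d_mg$, the expansion/contraction rates on $E^u_g$) are close enough to their values for $f_N$ that the slack in the inequality $8N^2(2+C_X)\lam^N(1+\lam^N)<20N^2\lam^N$ used for $f$ is not destroyed; this is exactly guaranteed by choosing $N$ large and then $U$ small, so no new idea is needed beyond Lemma~\ref{adapted}.
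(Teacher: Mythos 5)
The paper defers this lemma to ``the same fashion'' as Lemma~\ref{adapted}, and your proposal has the right skeleton (triangle-inequality split, quadratic-remainder bound from (\textit{B}), normalization algebra, backward contraction via Corollary~\ref{partialcorog}), but the first half of the split does not survive the passage from $f$ to $g$ as you present it. You bound $\norm{d_mg(X_m)-d_mg(X_{m'})}\leq \norm{d_mg}\norm{X_m-X_{m'}}$ and then claim $\norm{d_mg}\leq 2N$, justified by ``(\textit{A}) applied in each invariant direction'' or ``absorbed in the choice of $U$.'' Neither works: (\textit{A}) gives $\norm{dg(v^u)}\geq 0.99\mu^{2N}$ on $E^u_g$, so $\norm{d_mg}\geq 0.99\mu^{2N}\gg 2N$ for every $g$ near $f_N$, and shrinking $U$ cannot change this. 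In Lemma~\ref{adapted} for $f$, the bound $\norm{d_mf(X_m-X_{m'})}\leq 2N\norm{X_m-X_{m'}}$ is legitimate only because $E^c_f=\Real^2\times\{0\}$ is the \emph{same} plane at every point, so that $X_m-X_{m'}\in E^c_f$ and one may invoke the center estimate $\norm{df|_{E^c}}\leq 2N$. For $g$ the plane $E^c_g$ moves with the base point, so $X_m\in E^c_g(m)$ and $X_{m'}\in E^c_g(m')$ lie in different planes, and the component of $X_m-X_{m'}$ transverse to $E^c_g(m)$ is expanded by $dg$ at the full rate $\approx\mu^{2N}$.

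What saves this step is exactly condition (\textit{C}), which you list among the hypotheses but never actually use: the center field $E^c_g$ is $1/2$-H\"older with a H\"older constant $C_{E^c_g}$ that, as the text emphasizes, tends to $0$ as $g\to f_N$. Decomposing $X_{m'}=X'_{m'}+\delta$ with $X'_{m'}\in E^c_g(m)$ and $\norm{\delta}\leq C_{E^c_g}\,d(m,m')^{1/2}$ gives
\begin{equation*}
\norm{d_mg(X_m-X_{m'})}\leq 2N\norm{X_m-X_{m'}}+(2N+1.01\mu^{2N})\,C_{E^c_g}\,d(m,m')^{1/2},
\end{equation*}
and the extra term is harmless once $U$ is shrunk so that $C_{E^c_g}\ll\mu^{-2N}$. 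As written, your argument rests on a false operator-norm bound and never invokes (\textit{C}); that is a genuine gap, even though the remaining steps (the Lipschitz bound on $m\mapsto d_mg$ from~(\textit{B}), the normalization algebra, and the contraction factor $\approx 1.01\lambda^{2N}$) are fine.
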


Observe that
\[I_n^{\ga,X}=\sum_{k=0}^{n-1}\frac{1}{|\ga|}\int_{\ga} \log\norm{d_{g^km}g(Y^k\circ g^k(m))}d\ga= \sum_{k=0}^{n-1}\frac{1}{|\ga|}\int_{g^k\ga} \log\norm{d_{m}gY^k}J^u_{g^{-k}}d(f^k(\ga))\]
Writing $g^k \gamma$ as the concatenation (\ref{concaug}), it comes as for (\ref{intepourf}):
\[I_n^{\ga,X}= \sum_{k=0}^{n-1}\left(R_k+ 
\sum_{j=0}^{N_k}
\frac{1}{|\ga|} \int_{\ga_j^k} \log{\norm{d_{m}g(Y^k)}}J^u_{g^{-k}}d\ga_j^k
\right),
\]
where $R_k= \frac{1}{|\ga|} \int_{\ga^k_{N_k+1}} \log{\norm{d_{m}g(Y^k)}}J^u_{g^{-k}} d\ga_{N_k+1}^k$. By Corollary \ref{partialcorog}, for every $j, k$:
\[\frac{|\ga_j^k|}{|\ga|}\ge 0.9 \quad\text{and}\quad \frac{|\ga_{N_k+1}^k|}{|\ga|}\le 1.1\]
Hence 
\[I_n^{\ga,X}\ge  \sum_{k=0}^{n-1}\left(R_k+ 
0.9 \sum_{j=0}^{N_k}
\min_{\ga_j^k} (J^u_{g^{-k}}) \cdot E(\ga_j^k, Y^k )
\right). 
\]
Also by {\it(A)} and Corollary \ref{partialcorog}:
\[\quad |R_k|\le |\ga_{N_k+1}^k| \max_{\ga_{N_k+1}^k} | J^u_{g^{-k}}|\cdot  \log{\norm{d g|E_c}}\le \frac{(0.99\mu^{2N})^{-k}\log( 2N)}{\lam^{N}(1-2\lam^N)\norm{P_x(e^u)}},\]
Which approaches $0$ when $k$ approaches infinity. Hence its Ces\`aro mean also converges to zero:
\[\frac1n \sum_{k=0}^{n-1} |R_k|\rightarrow 0,\; n\rightarrow \infty.\] 
By using again the Ces\`aro mean and  Proposition \ref{ergog}, to show that the map $g$\ has a positive Lyapunov exponent in the center direction, it suffices to show that:
\begin{proposition} \label{fonda} For $U$ small enough, for every $g\in U$, every admissible field $(\gamma,X)$ for $g$, every $k\ge 0$, it holds
\[ \sum_{j=0}^{N_k}
\min_{\ga_j^k}(J^u_{g^{-k}} \cdot E(\ga_j^k, Y^k) )\ge \frac{\log N}{1000}.\]
\end{proposition}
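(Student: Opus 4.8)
The idea is to reduce Proposition~\ref{fonda} to the combinatorial transition estimates already proved for the unperturbed map, by absorbing the distortion factors $J^u_{g^{-k}}$ into constants close to $1$. First I would use the distortion bounds of Lemma~\ref{distortion} together with \eqref{eqjacobiano}: since every $\ga_j^k$ is a $u$-curve for $g$ and all $u$-curves have comparable length (Corollary~\ref{partialcorog}), the quantity $\min_{\ga_j^k}(J^u_{g^{-k}})$ lies between $D^{-1}$ and $D$ times the ``average'' value, which itself is comparable to $(0.99\,\mu^{2N})^{-k}\cdot\frac{|\ga_j^k|}{|g^k\ga|}$, i.e.\ of order $\mu^{-2Nk}$ uniformly in $j$. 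Since $D$ can be taken arbitrarily close to $1$ for $N$ large and $U$ small, after pulling out this common factor $\mu^{-2Nk}$ (absorbing a harmless $(1+o(1))$ into the final constant) the sum over $j$ becomes, up to a factor close to $1$,
\[
\mu^{-2Nk}\sum_{j=0}^{N_k} E(\ga_j^k, Y^k).
\]

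Next I would invoke the analogue of the good/bad machinery for $g$. By the perturbed version of the transition lemmas (Lemmas~\ref{goodtogood} and \ref{badtogood} carry over since $E^c_g$ is $C^2$-close to $E^c_f$ and the cone/critical-strip arguments are open conditions), one gets the perturbed Proposition~\ref{G1B1}, hence the recursion \eqref{GyB} with $\eta=\eta_N$ slightly enlarged, and therefore the perturbed version of Lemma~\ref{doce}: starting from a good adapted field one has $\#G_k > 100\,\#B_k$ for all $k\ge 1$, so $\#G_k \ge \frac{1}{1+1/100}N_k$ and $N_k$ is comparable to $\mu^{2Nk}$. (For a \emph{bad} initial field one still has $\#G_1\ge \mu^{2N}/3$ after one step, but in the statement we only need the bound for adapted fields arising as $\ga_j^k$ inside the iteration of a good field, which is exactly the setting of Lemma~\ref{doce}; alternatively one treats $k=0$ and $k\ge1$ separately, the $k=0$ case being immediate from part~(1) of Proposition~\ref{Boundsg} applied to the single curve $\gamma$.) Then, splitting the sum according to $G_k$ and $B_k$ and using Proposition~\ref{Boundsg} — $E\ge \frac17\log N$ on good curves, $E\ge -\log 2N$ on bad ones, and $|E|\le\log 2N$ on the leftover slice — we obtain
\[
\mu^{-2Nk}\sum_{j=0}^{N_k} E(\ga_j^k, Y^k)
\;\ge\; \mu^{-2Nk}\Big(\#G_k\,\tfrac{\log N}{7} - (\#B_k+1)\log 2N\Big)
\;\ge\; \frac{1}{1+\frac1{100}}\Big(\tfrac{\log N}{7} - \tfrac{\log 2N}{100}\Big) - \frac{\log 2N}{\mu^{2Nk}},
\]
which for $N$ large exceeds $\frac{\log N}{1000}$ uniformly in $k\ge 1$ (and the $k=0$ term is even larger). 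Reinstating the distortion factor $D$ close to $1$ only worsens the constant by a bounded multiplicative amount, still leaving the bound $\frac{\log N}{1000}$ with room to spare.

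\textbf{Main obstacle.} The routine part is the arithmetic of the constants; the genuine work is checking that Lemmas~\ref{goodtogood}, \ref{badtogood} and hence Proposition~\ref{G1B1} survive the perturbation. The subtlety is that for $g\in U$ the center bundle $E^c_g$ is no longer the constant bundle $\Real^2\times\{0\}$, so ``$X_m\in\Delta$'' must be replaced by ``$\widetilde{X}_m\in\Delta$'' (projection to $E^c_f$), and one must verify that the expansion/angle estimates driving the transitions are stable under replacing $d_mf|E^c$ by $d_mg|E^c_g$ and under the $C^1$-small error between $X_m$ and $\widetilde X_m$. This is where the hypotheses (\textit{A})--(\textit{C}) and the smallness of $U$ depending on $N$ are used: the critical strip has definite length $\asymp N^{-1/2}$, the cone $\Delta$ has definite aperture $\asymp N^{-1/4}$, while the perturbation of $E^c$ and of the derivative can be made $o(N^{-1/2})$, so all the inequalities in the proofs of Lemmas~\ref{goodtogood} and \ref{badtogood} (e.g.\ $|\Omega(m)-n|\ge \sqrt N - 2 - \sqrt[4]N$ and the ``badness'' strip of length $\pi$) persist with slightly degraded but still adequate constants. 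Once this stability is in hand, the rest is a transcription of Section~\ref{Sect.5} with $\mu^{2kN}$ replaced by $N_k$ and an extra distortion factor $D\approx 1$.
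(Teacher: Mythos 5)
Your reduction to the cardinality estimates $\#G_k>100\#B_k$ rests on the claim that all the weights $\min_{\ga_j^k}J^u_{g^{-k}}$, $1\le j\le N_k$, are comparable to a single number of order $\mu^{-2Nk}$ within a factor $D$ close to $1$; that claim is false, and Lemma~\ref{distortion} does not give it. The distortion lemma bounds the oscillation of $J^u_{g^{-k}}$ \emph{on a single $u$-curve}: it is applied with $\ga=\ga_j^k$, so it only says $\min_{\ga_j^k}J^u_{g^{-k}}\approx\max_{\ga_j^k}J^u_{g^{-k}}$ within a fixed factor $D$. Across two different pieces $\ga_j^k$ and $\ga_{j'}^k$ of the concatenation $g^k\circ\ga$, the corresponding Jacobians are \emph{not} comparable with a constant close to $1$: by condition (\textit{A}), $J^u_{g^{-k}}$ ranges over an interval of the form $[(\lam^{2N}/1.01)^k,(\lam^{2N}/0.99)^k]$, whose endpoints differ by a factor $(1.01/0.99)^k$ that is unbounded in $k$. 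Equivalently, $J^u_{g^{-k}}$ being roughly constant along $g^k\gamma$ is the same as $J^u_{g^{k}}$ being roughly constant along $\gamma$, and this is precisely the statement the distortion argument does \emph{not} prove, since forward orbits of two points of $\gamma$ separate exponentially. So you cannot pull out the factor ``$\mu^{-2Nk}(1+o(1))$'' and then argue with $\#G_k$ and $\#B_k$.

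This is in fact the reason the paper does \emph{not} reduce to cardinalities. Its version of Lemma~\ref{doce}, namely Lemma~\ref{prefonda}, is an induction carried directly on the Jacobian-weighted quantities $\sum_{j\in G_k}\min_{\ga_j^k}J^u_{g^{-k}}$ and $\sum_{j\in B_k}\max_{\ga_j^k}J^u_{g^{-k}}$; the role of the identity $\#G_k+\#B_k\approx\mu^{2Nk}$ from the unperturbed case is played by the change-of-variables mass balance (Lemma~\ref{numberg}), and the transition Lemma~\ref{tripforg} is used to split these weighted sums at each induction step (equations~(\ref{final}) and~(\ref{final2})). The parts of your plan that \emph{do} go through are the stability of the good/bad transition mechanism under perturbation (your ``main obstacle'' paragraph), which matches Lemma~\ref{tripforg}, and the final arithmetic once the weighted analogue of Lemma~\ref{doce} is in hand. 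What is missing is the weighted induction itself; without it the bound $\#G_k>100\#B_k$ gives no control on $\sum_{G_k}\min J^u/\sum_{B_k}\max J^u$, since the bad pieces might carry systematically heavier Jacobian weight than the good ones.
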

We recall that by the symmetry property of $f_N$, the inverse of $g$ has the same property as $g$ and so the same argument for $g^{-1}$ implies the main result. 
\begin{proof}[Proof of Proposition \ref{fonda}]
This follows from a lemma based on the same dichotomy  between Good and bad adapted fields for $g$. Let $G_k(\gamma, X) $ and $B_k(\gamma,X)$ be the corresponding subsets of $\{\gamma_j^k,j\le N_k\}$. Let us begin with the following lemma proved below.

\begin{lemma}\label{prefonda} For $N$ large and then $U$ small enough, for every $g\in U$, for every good adapted field $(\ga,X)$ for $g$, for every $k\geq 0$ it holds
\begin{enumerate}
\item $\sum_{j\in G_k} \min_{\ga_j^k}J^u_{g^{-k}}\geq \frac{1}{100}.$
\item $\sum_{j\in G_k} \min_{\ga_j^k}J^u_{g^{-k}}\geq 100 \sum_{j\in B_k} \max_{\ga_j^k}J^u_{g^{-k}}.$
\end{enumerate}
\end{lemma}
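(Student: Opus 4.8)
The plan is to prove Lemma \ref{prefonda} by induction on $k$, propagating a ``mass'' version of the good/bad dichotomy where the mass of a $u$-curve $\gamma_j^k$ is weighted by $\min_{\gamma_j^k} J^u_{g^{-k}}$ (resp.\ $\max_{\gamma_j^k} J^u_{g^{-k}}$). Concretely, set $\mathsf g_k := \sum_{j\in G_k}\min_{\gamma_j^k}J^u_{g^{-k}}$ and $\mathsf b_k := \sum_{j\in B_k}\max_{\gamma_j^k}J^u_{g^{-k}}$. The base case $k=0$ is immediate since $(\gamma,X)$ is good, so $B_0=\emptyset$ and $\mathsf g_0 = J^u_{g^0}=1$; and the $k=1$ case follows from the transition estimates below together with distortion bounds (Lemma \ref{distortion}). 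For the inductive step I would pass from level $k$ to level $k+1$ by refining each $\gamma_j^k$ under one more application of $g$, using the multiplicativity of the unstable Jacobian, $J^u_{g^{-(k+1)}}(m) = J^u_{g^{-1}}(m)\cdot J^u_{g^{-k}}(g^{-1}m)$, to relate the weights at consecutive levels, and Lemma \ref{distortion} (with $D$ close to $1$) to replace pointwise minima/maxima over a refined curve by the corresponding quantity on the parent curve up to a factor $D$.

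The key analytic inputs are the perturbative analogues of Lemmas \ref{goodtogood} and \ref{badtogood} and of Proposition \ref{G1B1}. First I would establish that for $g\in U$ these transition statements hold verbatim up to small errors: if $(\gamma,X)$ is a good adapted field for $g$ then all but a proportion $O(1/\sqrt N)$ of the sub-curves $\gamma_j^1$ are again good (because outside $\mathit{Crit}$ the cone $\Delta$ is mapped strictly inside itself by $d_mg|_{E^c_g}$, which is $C^2$-close to $d_mf|_{E^c_f}$, so the argument of Lemma \ref{goodtogood} is stable under small perturbation); and if $(\gamma,X)$ is bad, then there is a strip of length $\pi$ over which the image fields are good, exactly as in Lemma \ref{badtogood}, the estimate $\|d_mg(X_m-X_{m_0})\| < \text{(small)}$ now coming from property (\textit{C}) (the H\"older constant of $E^c_g$ is small) rather than from Corollary \ref{variation}. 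Combining these with the fact that the $u$-curves $\gamma_j^1$ all have comparable Lebesgue length (Corollary \ref{partialcorog}) and with the distortion bounds to control the Jacobian weights, I get mass-weighted transition inequalities of the shape
\begin{equation*}
\mathsf g_{k+1} \geq (1-\eta')\,\lambda^{2N}\!/\!D \cdot \mathsf g_k \;+\; \tfrac{1}{3}\,\lambda^{2N}\!/\!D\cdot \mathsf b_k, \qquad
\mathsf b_{k+1} \leq \eta'\,\lambda^{2N}D\cdot \mathsf g_k \;+\; \tfrac{2}{3}\,\lambda^{2N}D\cdot \mathsf b_k \;+\; (\text{boundary term}),
\end{equation*}
with $\eta' = O(1/\sqrt N)$ and $D$ close to $1$; here the $\lambda^{2N}$ factors come from \eqref{eqjacobiano} and account for the fact that each $\min J^u_{g^{-(k+1)}}$ is roughly $\lambda^{2N}$ times a $\min J^u_{g^{-k}}$ at the parent, while the boundary term accounts for the single non-$u$-curve piece $\gamma_{N_k+1}^k$ and is $O(\lambda^{2N}\cdot (0.99\mu^{2N})^{-k})$, hence summable. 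Dividing through, the ratio $\mathsf b_{k+1}/\mathsf g_{k+1}$ is bounded by $\frac{\eta'D^2 + \frac{2}{3}D^2 \cdot (\mathsf b_k/\mathsf g_k)}{(1-\eta')(1/D)\cdot \text{(something)}}$ plus a vanishing term, and exactly as in Lemma \ref{doce} the induction closes: if $\mathsf g_k \geq 100\,\mathsf b_k$ then, for $N$ large and $U$ small (so $\eta'$ small and $D$ near $1$), $\mathsf g_{k+1} \geq 100\,\mathsf b_{k+1}$, giving part (2). For part (1), I would track the total mass: $\mathsf g_k + \mathsf b_k$ is, up to the factor $D$, the sum $\sum_j \min_{\gamma_j^k} J^u_{g^{-k}}$, which is comparable to $\int_{g^k\gamma} J^u_{g^{-k}}\,/\,|g^k\gamma|$ times $|\gamma|$, i.e.\ it is bounded below by a constant (roughly $1$, since $g^{-k}$ pulls $g^k\gamma$ back to $\gamma$) by the change of variables theorem and Corollary \ref{corodistbound}; combined with part (2) this forces $\mathsf g_k \geq \frac{1}{100}$ for $N$ large.

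With Lemma \ref{prefonda} in hand, Proposition \ref{fonda} is immediate: split the sum over $j$ into $G_k$ and $B_k$, bound $E(\gamma_j^k, Y^k) \geq \frac{1}{7}\log N$ on good fields and $E(\gamma_j^k, Y^k) \geq -\log 2N$ on all fields (Proposition \ref{Boundsg}), and estimate
\begin{equation*}
\sum_{j=0}^{N_k} \min_{\gamma_j^k}(J^u_{g^{-k}})\, E(\gamma_j^k,Y^k) \;\geq\; \tfrac{1}{7}\log N \sum_{j\in G_k}\min_{\gamma_j^k}J^u_{g^{-k}} \;-\; \log 2N \sum_{j\in B_k}\max_{\gamma_j^k}J^u_{g^{-k}} \;\geq\; \big(\tfrac{1}{700} - \tfrac{\log 2N}{100\log N}\big)\log N,
\end{equation*}
which exceeds $\frac{\log N}{1000}$ for $N$ large (and then $U$ small). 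The main obstacle I expect is not any single inequality but getting the bookkeeping of the Jacobian weights consistent: in the non-perturbative case the factor $\mu^{-2Nk}$ was literally the reciprocal length, whereas here one only has $\min J^u_{g^{-k}}$ and $\max J^u_{g^{-k}}$ differing by the distortion constant $D$, and one must be careful that the accumulated distortion over $k$ steps stays uniformly bounded (which is exactly the content of Lemma \ref{distortion}: $D$ is a single constant independent of $k$, close to $1$) so that the geometric decay of the boundary terms and the $\eta'$-contraction in the dichotomy are not swamped. The perturbative version of Lemma \ref{badtogood} — checking that property (\textit{C}) with small H\"older constant really does give $\|d_mg(X_m - X_{m_0})\|$ small enough to leave the cone-mapping argument intact — is the one genuinely new estimate, but it is a routine continuity argument once the relevant constants are tracked.
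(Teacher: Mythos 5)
Your proposal is correct and follows essentially the same route as the paper: you prove part (2) by induction on $k$ using mass-weighted transition inequalities built from the perturbative analogues of Lemmas \ref{goodtogood}, \ref{badtogood} (which the paper packages as Lemma \ref{tripforg}) together with the distortion bound $D$ to pass between $\min$ and $\max$ of $J^u_{g^{-k}}$; you handle the extra non-$u$-curve piece $\gamma^k_{N_k+1}$ as a small additive term; and for part (1) you observe, via change of variables and Corollary \ref{corodistbound}, that the total mass $\mathsf g_k+\mathsf b_k$ is bounded above and below by constants near $1$, so part (2) forces $\mathsf g_k\geq 1/100$ — this is exactly the paper's auxiliary Lemma \ref{numberg}.

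One small slip in your displayed recurrence: you wrote $\mathsf g_{k+1}\geq (1-\eta')\lambda^{2N}/D\cdot\mathsf g_k+\cdots$, but you have dropped the multiplicity factor $\approx\mu^{2N}$ coming from the number of sub-curves $\gamma_j^{k+1}$ each parent $\gamma_i^k$ contributes. The correct coefficient is $\approx (1-\eta')/D$, not $(1-\eta')\lambda^{2N}/D$ (multiplicity times per-term Jacobian factor); as written the inequality would force $\mathsf g_k\to 0$ geometrically, contradicting part (1). This does not break your argument, since you only use the recurrence for the ratio $\mathsf b_{k+1}/\mathsf g_{k+1}$ (where the spurious factor cancels) and prove part (1) independently by the total-mass argument, but the displayed coefficients should be corrected.
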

Indeed:
\[ \sum_{j=0}^{N_k}
\min_{\ga_j^k} (J^u_{g^{-k}}\cdot E(\ga_j^k, Y^k)) =
\sum_{j\in G_k} \min_{\ga_j^k}(J^u_{g^{-k}}\cdot E(\ga_j^k, Y^k))+\sum_{j\in B_k} \min_{\ga_j^k}(J^u_{g^{-k}}\cdot E(\ga_j^k, Y^k))\]
By Proposition \ref{Boundsg} and then Lemma \ref{prefonda}:
\[ \sum_{j=0}^{N_k}
\min_{\ga_j^k} (J^u_{g^{-k}}\cdot E(\ga_j^k, Y^k)) \ge \frac{log N}7\sum_{j\in G_k} \min_{\ga_j^k}J^u_{g^{-k}} -{log 2N}\sum_{j\in B_k} \max_{\ga_j^k}J^u_{g^{-k}}\]
\[\ge (\frac{log N}7-\frac{ log 2N}{100})\sum_{j\in G_k} \min_{\ga_j^k}J^u_{g^{-k}}\ge \frac{\frac{log N}7-\frac{log 2N}{100}}{100}\ge \frac{\log N}{1000}\] 
which is the claim of Proposition \ref{fonda}.\end{proof}

\begin{proof}[Proof of Lemma \ref{prefonda}] We start with the following
\begin{lemma}\label{numberg}
For every adapted field $(\ga,X)$  for $g$. For every positive integer $k\geq0$, it holds
$$
\frac1{2D}\leq\sum_{j\in G_k} \min_{\ga_j^k}J^u_{g^{-k}}+ \sum_{j\in B_k} \max_{\ga_j^k}J^u_{g^{-k}}\le 2D.
$$
\end{lemma}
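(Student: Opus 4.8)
The plan is to observe that the quantity $\sum_{j\in G_k}\min_{\ga_j^k}J^u_{g^{-k}}+\sum_{j\in B_k}\max_{\ga_j^k}J^u_{g^{-k}}$ is, up to distortion, a Riemann-type comparison for the total mass $Leb(g^{-k}\ga)/Leb(\ga)$. Concretely, the curve $g^k\ga$ decomposes as the concatenation $\ga_1^k\ast\cdots\ast\ga_{N_k}^k\ast\ga_{N_k+1}^k$ from \eqref{concaug}, and pulling back by $g^{-k}$ we get $\ga=g^{-k}\ga_1^k\ast\cdots\ast g^{-k}\ga_{N_k+1}^k$. By the change of variables theorem, $Leb(g^{-k}\ga_j^k)=\int_{\ga_j^k}J^u_{g^{-k}}\,d(g^k\ga)$, so each term lies between $|\ga_j^k|\min_{\ga_j^k}J^u_{g^{-k}}$ and $|\ga_j^k|\max_{\ga_j^k}J^u_{g^{-k}}$. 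Summing over $j\le N_k$ and adding the remainder piece $j=N_k+1$ gives $Leb(\ga)$ on the nose.

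First I would use Corollary \ref{partialcorog} to replace all the lengths $|\ga_j^k|$ (for $j\le N_k$, where $\ga_j^k$ is a genuine $u$-curve, hence $|\ga_j^k|$ is within a factor $1.1$ of $|\ga|$, which is itself within a factor $1.1$ of any other $u$-curve length, in particular $|\ga|$ itself is essentially fixed) by $|\ga|$ up to a factor close to $1$; the leftover segment $\ga_{N_k+1}^k$ has length $\le 1.1|\ga|$ and contributes a nonnegative term bounded above using \eqref{eqjacobiano}, so it causes no trouble for either inequality. Then dividing through by $|\ga|$, the sum $\sum_{j\in G_k}\min+\sum_{j\in B_k}\max$ is comparable, within a factor close to $1$, to $Leb(\ga)/|\ga|$ together with $\sum_j (\text{oscillation on }\ga_j^k)$; the distortion bounds of Lemma \ref{distortion} say that on each $\ga_j^k$ the ratio $\max J^u_{g^{-k}}/\min J^u_{g^{-k}}$ is within a factor $D$, and $D$ is close to $1$ for $N$ large and $U$ small. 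Finally, $Leb(\ga)/|\ga|=1$ since $|\ga|=Leb(\ga)$ is just the length, so the whole expression is pinned between $\tfrac1{2D}$ and $2D$ once all the near-$1$ factors are absorbed into the constant $D$ (enlarging it mildly if necessary).

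The only mildly delicate point is keeping careful track of the direction of each inequality: for the lower bound $\tfrac1{2D}$ I use $\min_{\ga_j^k}J^u_{g^{-k}}\ge \tfrac1D\,\overline{J}_j$ where $\overline{J}_j$ is the average of $J^u_{g^{-k}}$ over $\ga_j^k$, sum up to recover (most of) $Leb(\ga)$, and discard the $j=N_k+1$ term since it is nonnegative; for the upper bound $2D$ I use $\max_{\ga_j^k}J^u_{g^{-k}}\le D\,\overline{J}_j$, sum, and bound the extra $j=N_k+1$ term by \eqref{eqjacobiano} and Corollary \ref{partialcorog}. I do not expect any real obstacle here — this is the standard ``bounded distortion controls the pullback measure partition-wise'' argument, and the factor $2$ is deliberately generous to swallow the $1.1$'s from the length comparison.
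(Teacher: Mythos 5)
Your plan follows the paper's proof essentially step for step: write $|\gamma|=\sum_{j}\int_{\gamma_j^k}J^u_{g^{-k}}\,d\gamma_j^k$ via the concatenation \eqref{concaug} and the change of variables, compare $|\gamma_j^k|$ to $|\gamma|$ using Corollary \ref{partialcorog}, and control $\max/\min$ on each piece with the distortion Lemma~\ref{distortion}. One small slip in your ``delicate point'' paragraph: you have the roles of the $j=N_k+1$ remainder swapped. For the upper bound $2D$ that remainder sits on the $|\gamma|$ side of the equality and, being nonnegative, can simply be dropped — you only need $\sum_{j\le N_k}\int_{\gamma_j^k}J^u_{g^{-k}}\le|\gamma|$. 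It is for the \emph{lower} bound $\tfrac{1}{2D}$ that you must actually show $\int_{\gamma_{N_k+1}^k}J^u_{g^{-k}}\,d\gamma_{N_k+1}^k$ is a negligible fraction of $|\gamma|$, since it is subtracted: for $k\ge 1$ it is $O\bigl((0.99\mu^{2N})^{-k}\bigr)$ by \eqref{eqjacobiano} and Corollary \ref{partialcorog}, hence absorbed by the slack once $N$ is large, and for $k=0$ there is no remainder. You already listed both facts (nonnegativity and the bound from \eqref{eqjacobiano}); they are just attached to the wrong inequality.
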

\begin{proof}
Form the distortion bounds Lemma \ref{distortion} and Corollary \ref{partialcorog}, we get
\begin{equation*}
1=\frac{1}{|\ga|}\int_{\ga}d\ga=\frac{1}{|\ga|}\sum_{k=1}^{N_k+1}\int_{\ga_j^k}  J^u_{g^{-j}}(m^{j,k})d\ga_j^k
\geq \sum_{j\in G_k} \frac{|\ga_j^k|}{|\gamma|} \min_{\ga_j^k}J^u_{g^{-k}}+ \frac{1}{D}\sum_{j\in B_k\cup\{N_k+1\}} \frac{|\ga_j^k|}{|\gamma|}\max_{\ga_j^k}J^u_{g^{-k}}
\end{equation*}
\begin{equation*}
\Rightarrow 1\ge \frac{0.9}{D}
\left(\Big(\sum_{j\in G_k}  \min_{\ga_j^k}J^u_{g^{-k}}\Big)+ \Big(\sum_{j\in B_k} \max_{\ga_j^k}J^u_{g^{-k}}\Big) -\frac{1.1(0.99\mu)^{-2kN}}{\lam^{N}(1-2\lam^N)\norm{P_x(e^u)}}\right)
\end{equation*}
and the second inequality follows for $N$ large. The first one is similar.
\end{proof}
As $D$ is close to $1$ when $U$ is small, the latter lemma with the second statement of Lemma \ref{prefonda} imply the first statement of Lemma  \ref{prefonda}. Hence it remains only to show the second statement of Lemma \ref{prefonda}.
For this end, we study the transitions between good and bad adapted fields.
\begin{lemma}\label{tripforg}
For $U$ small, for every $g\in U$, every   adapted field $(\ga,X)$
\begin{enumerate}
\item 
If $(\ga,X)$\ is a good adapted field and if $j$ is so that $g^{-1}\ga^1_j$ does not intersect the strip $\mathit{Crit}$ of length
$4/\sqrt{N}$, then the field $(\ga^1_j,\frac{g_{\ast}X}{\norm{g_{\ast}X}})$\ is good.
\item 
If $(\ga,X)$\ is bad,  there exists a strip $S_X$ of length $\pi$ so that
for every $j$ satisfying  $g^{-1}\ga^1_j\subset  S_X$, the field $(\ga^j_1,\frac{g_{\ast}X}{\norm{g_{\ast}X}})$ is good.
\end{enumerate}
\end{lemma}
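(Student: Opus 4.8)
The plan is to run the proofs of Lemmas \ref{goodtogood} and \ref{badtogood} in the perturbed setting. The only new ingredient is a comparison estimate: \emph{after projecting onto $\Real^2\equiv E^c_f$, the action of $d_mg$ on a unit $g$-center vector is $\mathcal{C}^0$-close to the action of $d_mf$ on the corresponding $f$-center vector, provided $U$ is small enough in terms of $N$}. To see it, fix $m\in M$ and a unit vector $X_m\in E^c_g(m)$, and set $\widetilde{X}_m=\pi(X_m)/\norm{\pi(X_m)}\in\Real^2$. Since $E^c_g$ depends continuously on $g$ and $E^c_f=\Real^2\times\{0\}$, the vector $X_m$ is uniformly close to $(\widetilde{X}_m,0)$ when $U$ is small; moreover $\norm{d_mg}\le K_N$ for a constant $K_N$ depending only on $N$, and $d_mg$ is uniformly $\mathcal{C}^0$-close to $d_mf$ when $U$ is small. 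Since $d_mf$ preserves $\Real^2\times\{0\}$ and acts there by $\left(\begin{smallmatrix}\Omega(m)&-1\\ 1&0\end{smallmatrix}\right)$, and $\pi$ is $1$-Lipschitz, we obtain
$$\norm{\,\pi\big(d_mg(X_m)\big)-d_mf(\widetilde{X}_m)\,}\le\delta(N,U),$$
where $\delta(N,U)$ can be made as small as we wish --- say $\delta(N,U)<N^{-1}$ --- by shrinking $U$ after $N$ is fixed. Finally note that if $m'=g(m)$ lies on $\ga^1_j$ and $Y=\frac{g_\ast X}{\norm{g_\ast X}}$, then $\widetilde{Y}_{m'}$ spans the line through $\pi(d_mg(X_m))$; hence $(\ga^1_j,Y)$ is good if and only if $\pi(d_mg(X_m))\in\Delta$ for every $m\in g^{-1}\ga^1_j$.

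\emph{Item $(1)$.} Let $(\ga,X)$ be good and $m\in g^{-1}\ga^1_j$, so that $m\notin\mathit{Crit}$ and $\widetilde{X}_m\in\Delta$, i.e.\ $\widetilde{X}_m$ is collinear with some $(1,n)$, $|n|\le\sqrt[4]{N}$. By Lemma \ref{Xnobad}, $|\Omega(m)|>\sqrt{N}-2$, so $d_mf(\widetilde{X}_m)$ is collinear with $(\Omega(m)-n,1)$ with $|\Omega(m)-n|\ge\sqrt{N}-2-\sqrt[4]{N}$; thus $d_mf(\widetilde{X}_m)$ has norm at least $\frac14N^{1/4}$ and slope at most $2/\sqrt{N}$, hence lies in $\Delta$ with a large margin. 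Adding an error of size $\delta(N,U)<N^{-1}$ keeps $\pi(d_mg(X_m))$ in $\Delta$. This is the proof of Lemma \ref{goodtogood} with the $\delta$-term absorbed.

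\emph{Item $(2)$.} Let $(\ga,X)$ be bad, witnessed by $m_0\in\ga$ with $\widetilde{X}_{m_0}\notin\Delta$; write $\widetilde{X}_{m_0}$ as collinear with $(a',1)$ where $|a'|\le N^{-1/4}$ (the vertical case being $a'=0$). Since $X$ is $(C_X,1/2)$-H\"older along the $u$-curve $\ga$ and $|\ga|$ is bounded by Corollary \ref{partialcorog}, the proof of Corollary \ref{variation} gives $\norm{X_m-X_{m_0}}<\lam^{N/3}$; hence, for all $m\in\ga$, $\widetilde{X}_m$ is collinear with some $(a'_m,1)$ with $|a'_m|\le 2N^{-1/4}$ and $|a'_m-a'|\le C\lam^{N/3}$ for a universal constant $C$. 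Now $d_mf(a'_m,1)=(\Omega(m)a'_m-1,\,a'_m)$, a vector with tiny second coordinate, so it lies in $\Delta$ with margin as soon as $|\Omega(m)a'_m-1|\ge\tfrac12$. If $|a'|<2C\lam^{N/3}$, then $|a'_m|<3C\lam^{N/3}$ and $|\Omega(m)a'_m|\le(N+2)\cdot 3C\lam^{N/3}<\tfrac12$ for every $m$ when $N$ is large, so $|\Omega(m)a'_m-1|\ge\tfrac12$ on all of $\ga$ and any half-strip may serve as $S_X$. Otherwise $a'_m$ keeps the sign of $a'$ throughout $\ga$, and we take $S_X$ to be the strip of length $\pi$ on which $\cos x$ has the sign opposite to $a'$: then for $m\in S_X$ either $\Omega(m)a'_m\le 0$, giving $|\Omega(m)a'_m-1|\ge1$, or $\Omega(m)\in(0,2]$ and $|\Omega(m)a'_m|\le 4N^{-1/4}<\tfrac12$; in both cases $|\Omega(m)a'_m-1|\ge\tfrac12$. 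In either alternative $d_mf(\widetilde{X}_m)\in\Delta$ with margin at least $\tfrac14$ for $m\in S_X$, and adding the $\delta(N,U)$-error leaves $\pi(d_mg(X_m))\in\Delta$. This is the proof of Lemma \ref{badtogood} with the extra errors absorbed.

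The main obstacle is the comparison estimate at the start: since $\norm{d_mg}$ is a priori of order $\mu^{2N}$ on all of $T_mM$, one cannot control $\pi(d_mg(X_m))-d_mf(\widetilde{X}_m)$ from $\mathcal{C}^1$-closeness of $g$ to $f$ alone --- it is essential that $U$ be chosen \emph{after} $N$, which is precisely the order of quantifiers adopted in this section. Granting that, the rest amounts to re-reading the margins in Lemmas \ref{Xnobad}, \ref{goodtogood} and \ref{badtogood} and observing that they comfortably dominate the $\lam^{N/3}$- and $\delta(N,U)$-sized errors we have introduced.
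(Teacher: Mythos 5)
Your proof is correct and follows essentially the same line as the paper's: reduce the perturbed transition lemma to the unperturbed Lemmas \ref{goodtogood} and \ref{badtogood} by establishing that $\pi(d_mg(X_m))$ is $\mathcal{C}^0$-close to $d_mf(\widetilde{X}_m)$ once $N$ is fixed and $U$ shrunk accordingly, then observe that the margins in those two lemmas (the cone $\Delta$ is hit well away from its boundary) comfortably absorb the error. The paper compresses this into its opening sentence "$\widetilde{g_{\ast}X}$ is close to $\frac{g_{\ast}\widetilde{X}}{\norm{g_{\ast}\widetilde{X}}}$ which is itself close to $\frac{f_{\ast}\widetilde{X}}{\norm{f_{\ast}\widetilde{X}}}$, for $U$ small"; you merely spell out the quantitative margins in both cases, which is a faithful, somewhat more detailed rendering of the same argument.
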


\begin{proof}
First observe that $\widetilde{g_{\ast}X}$ is close to $\frac{g_{\ast}\widetilde{X}}{\norm{g_{\ast}\widetilde{X} }}$ which is itself close to $\frac{f_{\ast}\widetilde{X}}{\norm{f_{\ast}\widetilde{X} }}$, for $U$ small and this is uniform for $g\in U$ and $(\gamma, X)$ adapted field for $g$. 

Assume that $(\ga_g,X)$ is good. In Lemma \ref{goodtogood} we proved that for every $m\in\ga_g\cap\mathit{Crit}^c$\
$$\frac{(f_{\ast}\widetilde{X})_{fm}}{\norm{(f_{\ast}\widetilde{X})_{fm}}}\in \Delta$$
and is uniformly away from the boundary. Hence we conclude that for every point  $m\in \ga_g\cap\mathit{Crit}^c, (\widetilde{g_{\ast}X})_{gm}\in \Delta$\ and the first part follows. The second part is similar using Lemma \ref{badtogood}.

\end{proof}

We are now ready to prove the second statement of  Lemma \ref{prefonda}. Observe that this is the exact analog of Lemma \ref{doce} for the case of $f$.
%\begin{proposition}\label{boundseries}
%Let $(\ga_g,X)$ be a good adapted field. For every positive integer $k\geq0$, for every set of points $\{m^{j,k}\in\ga_g^{j,k}:1\leq j\leq N_k\}$ it holds
%
%$$
%\sum_{j\in G_k} J^u_{g^{-k}}(m^{j,k})\geq 100 \sum_{j\in B_k} J^u_{g^{-k}}(m^{j,k})
%$$
%\end{proposition}
%
%\begin{proof}[Proof of Lemma \ref{prefonda}.2]
The proof is by induction. The case $k=0$ follows by hypothesis. Assume the claim for $k\geq0$. By the distortion estimate:

\[
D \sum_{j\in G_{k+1}} Leb(\gamma_j^{k+1}) \min_{\gamma_j^{k+1}} J^u_{g^{-k-1}}\geq 
\int_{\sqcup_{j\in G_{k+1}} \gamma_j^{k+1}} J^u_{g^{-k-1}} d(g^{k+1}\gamma)\]
\[\ge 
\int_{\sqcup_{j\in G_{k+1}} \gamma_j^{k+1}\cap g(\sqcup_{i\in G_{k}} \gamma_i^{k}) } J^u_{g^{-k-1}} d(g^{k+1}\gamma)\ge \sum_{i\in  G_{k}} \int_{
\sqcup_{j\in G_{k+1}} \gamma_j^{k+1}\cap g( \gamma_i^{k})} J^u_{g^{-k-1}} d(g^{k+1}\gamma)
\]
\[\ge \sum_{i\in  G_{k}} \int_{
g^{-1}(\sqcup_{j\in G_{k+1}} \gamma_j^{k+1})\cap  \gamma_i^{k}} J^u_{g^{-k}} d(g^{k}\gamma)
\ge \sum_{i\in G_{k}} \min_{ \gamma_i^{k}} J^u_{g^{-k}}  Leb(g^{-1}(\sqcup_{j\in G_{k+1}} \gamma_j^{k+1})\cap  \gamma_i^{k})\]
By using Lemma \ref{tripforg}, with $\eta= \frac{5\sqrt{N}}{\pi}$, and then Corollary \ref{partialcorog}
\[Leb(g^{-1}(\sqcup_{j\in G_{k+1}} \gamma_j^{k+1})\cap  \gamma_i^{k})\ge (1-\eta) Leb( \gamma_i^{k})\ge 0.9(1-\eta) \max_j Leb( \gamma_j^{k+1})\]
Hence 
\begin{equation}\label{final} \sum_{j\in G_{k+1}} \min_{\gamma_j^k} J^u_{g^{-k-1}}\geq \frac{0.9(1-\eta)}D   \sum_{i\in \in G_{k}} \min_{ \gamma_i^{k}} J^u_{g^{-k}} \end{equation}

Similarly:
\[
\frac1D \sum_{j\in B_{k+1}} Leb(\gamma_j^{k+1})\max_{\gamma_j^{k+1}} J^u_{g^{-k-1}}\leq 
\int_{\sqcup_{j\in B_{k+1}} \gamma_j^{k+1}} J^u_{g^{-k-1}} d(g^{k+1}\gamma)\]
\begin{align*}
&\leq \int_{\sqcup_{j\in B_{k+1}} \gamma_j^{k+1}\cap g(\sqcup_{i\in B_{k}} \gamma_i^{k}) } J^u_{g^{-k-1}} d(g^{k+1}\gamma)+
\int_{\sqcup_{j\in B_{k+1}} \gamma_j^{k+1}\cap g(\sqcup_{i\in G_{k}} \gamma_i^{k}) } J^u_{g^{-k-1}} d(g^{k+1}\gamma)\\
&+\int_{g(\gamma_{N_k+1}^k)} J^u_{g^{-k-1}} d(g^{k+1}\gamma).
\end{align*}
The last term can be bound as:
\[\int_{g(\gamma_{N_k+1}^k)} J^u_{g^{-k-1}} d(g^{k+1}\gamma) \le 1.01 \mu^{2N} Leb(\gamma_{N_k+1}^k) (1.01 \mu^{2N})^{-k-1}
\]\[\le 1.01\cdot Leb(\gamma_{N_k+1}^k) (1.01 \mu^{2N})^{-k}\le 1.01\cdot \lambda^{-N}(1-2\lambda^N)^{-1}\norm{P_x(e^u)}^{-1}(1.01 \mu^{2N})^{-k}\le \lambda^{N/2}.\]

Thus by Lemma \ref{tripforg},
\begin{equation*}
\begin{split}
\frac1D \sum_{j\in B_{k+1}} Leb(\gamma_j ^{k+1})\max_{\gamma_j^{k+1}} J^u_{g^{-k-1}}\leq &
 \sum_{i\in G_{k}} D\min_{ \gamma_i^{k}} J^u_{g^{-k}}  Leb(g^{-1}(\sqcup_{j\in B_{k+1}} \gamma_j^{k+1})\cap  \gamma_i^{k})\\
&+
\sum_{i\in B_{k}} \max_{ \gamma_i^{k}} J^u_{g^{-k}}  Leb(g^{-1}(\sqcup_{j\in B_{k+1}} \gamma_j^{k+1})\cap  \gamma_i^{k})+\lambda^{N/2}\\
\le& \sum_{i\in G_{k}} D\min_{ \gamma_i^{k}} J^u_{g^{-k}}  
\eta
Leb( \gamma_i^{k}) +
\sum_{i\in B_{k}} \max_{ \gamma_i^{k}} J^u_{g^{-k}}  \frac23 Leb(\gamma_i^{k})+\lambda^{N/2}.
\end{split} 
\end{equation*}

Therefore: 
\[ \sum_{j\in B_{k+1}} \max_{\gamma_j^k} J^u_{g^{-k-1}}\leq  1.1 D^2 \eta 
(\sum_{i\in G_{k}} \min_{ \gamma_i^{k}} J^u_{g^{-k}}  ) +
  \frac{2.2}3 D  (\sum_{i\in B_{k}} \max_{ \gamma_i^{k}} J^u_{g^{-k}})+\lambda^{N/2}D. \]
By the induction hypothesis:
\begin{equation}\label{final2} \sum_{j\in B_{k+1}} \max_{\gamma_j^k} J^u_{g^{-k-1}}\leq  (1.1 D^2 \eta 
+  \frac{2.2}{300} D)
(\sum_{i\in G_{k}} \min_{ \gamma_i^{k}} J^u_{g^{-k}}  )+\lambda^{N/2}D.
\end{equation}
Using equations (\ref{final}) and (\ref{final2}),  we finally conclude:
\[\frac{\sum_{j\in B_{k+1}} \max_{\gamma_j^k} J^u_{g^{-k-1}}}{\sum_{j\in G_{k+1}} \min_{\gamma_j^k} J^u_{g^{-k-1}}}\le \frac{(1.1 D^2 \eta 
+  \frac{2.2}{300} D)}{\frac{0.9(1-\eta)}D}+100\lambda^{N/2}D\]
which is less than $1/100$ for $\eta$, $\lambda^{N/2}$ and $(D-1)$ small enough, and this can be achieved by taking first $N$ large and then $U$ small.
\end{proof}
\section{Concluding Remarks.}

Exactly in the same fashion one can prove the robust non-uniform hyperbolicity of the map
$$
f_r(m)=(\mathbf{s}_r(x,y) + P_x\circ A^{[r]}(z,w), A^{[2r]}(z,w))
$$
for sufficiently large parameters $r\in \Real$.

Also, for every linear endomorphism $L$ of $\mathbb R^2$ such that $L_{[1,1]}\not=0$ the map
\[(x,y,z,w)\mapsto (\mathbf{s}_{N}(x,y)+L(z,w), A^N(z,w))\]
preserves the two dimensional Lebesgue measure of the center bundle. Its perturbations are close to satisfy the same. Hence, instead of using the reversibility of the standard map, one can use this almost area preserving property to deduce the existence of the contracting central Lyapunov exponent from the lower bound on the expending central Lyapunov exponents.

%preserve the
%has an inverse which is conjugated to a map of the same form (with possibly different $L$ and $k$). Hence it seems that the presented method can show the robust pre-non uniform hyperbolicity of such a map.
Moreover, it seems also possible to apply our method to the class of maps obtained when $\mathbf{s}_N$ is replaced
by \[(x,y)\mapsto (p x-y+N \sin(x), x),\] where $p\in \mathbb Z$ is arbitrary.
 
%instead of considering the map $S_N$ equal to the standard map, it seems also possible to perform the same method with the following class of maps for any $p\in \mathbb Z$:

The presented integral method can also be used to give a new proof of the (ought to be) known robust non-uniform hyperbolicity of the map considered in \cite{RobTranT4}.

We can also wonder about the existence of a NUH physical measure for non conservative perturbations of $f_N$. However, very few works have been done on the existence of such measures when there are both positive and negative central Lyapunov exponents.

Let us point out that the map $f_N$\ can be $\mathcal{C}^2$-approximated by a stably ergodic diffeomorphism \cite{StableApprox}. We do not know if it $f_N$\  itself is (stably) ergodic. By \cite{ErgPH} stably ergodicity is implied by accessibility and center bunching (see the article for the definitions). In our case center bunching follows immediately, but accessibility seems harder to prove. Note that in the proof presented we did not need to use ergodicity, neither did we have to perturb the original dynamics to get it, as in other techniques.

%
%
% In (essentially all) the previously known examples of non-uniformly partially hyperbolic diffeomorphisms, a perturbation is made to guarantee accessibility thus ensuring that the map is ergodic. Once one has established that the map is ergodic, the arguments to show non-uniform hiperbolicity simplify due to the fact that the Lyapunov exponents are constant. The use of Lemma \ref{ergo} allows us to avoid the need of perturbing the map, and of proving ergodicity.

We finish by noting the following. It is a result of J. Bochi and M. Viana \cite{BochVia} that for any closed symplectic manifold $(M,\omega)$ there exists a $\mathcal{C}^1$ generic set
$\mathcal{R}\subset Sym^1_{\omega}(M)$ such that for $g\in\mathcal{R}$ then either (a) at least two Lyapunov exponents of $g$ are zero Lebesgue almost everywhere,  or (b) $g$ is Anosov.

We observe below that our family of maps $(f_N)_N$ is symplectic. Also each $f_r$ cannot be $\mathcal{C}^0$-approximated by an hyperbolic map (see the remark after Proposition \ref{partialhyp}). We conclude that the aforementioned result of Bochi-Viana does not hold in the $\mathcal{C}^2$ setting.

\begin{corollary}
There exists an analytic symplectic map $f:\mathbb{T}^4\rightarrow \mathbb{T}^4$ and a neighborhood $U\subset Sym^2_{\omega}(\mathbb{T}^4)$ such that every $g\in U$ satisfies
\begin{enumerate}
 \item $g$ is not Anosov.
 \item All Lyapunov exponents for $g$ with respect to the Lebesgue measure are non-zero.
\end{enumerate}
\end{corollary}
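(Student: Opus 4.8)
I would take $f:=f_N$ for one fixed $N\ge N_0$, with $N_0$ as in Theorem~\ref{main}. The map $f$ is analytic by construction, and the first thing to be established is that it preserves an analytic symplectic form $\omega$ on $\mathbb{T}^4$; granting this, let $U\subset Sym^2_{\omega}(\mathbb{T}^4)$ be a $\mathcal{C}^2$-neighborhood of $f$ chosen small enough to lie inside the $\mathcal{C}^2$-neighborhood furnished by Theorem~\ref{main}, and small enough that every $g\in U$ is $\mathcal{C}^0$-close to $f$ and hence homotopic to it. The claim is that $U$ works.

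Assertion $(2)$ is then almost immediate. A symplectic diffeomorphism preserves the Liouville volume $\tfrac12\,\omega\wedge\omega$, so every $g\in U$ is conservative, and being $\mathcal{C}^2$-close to $f_N$ it is covered by Theorem~\ref{main}: for Lebesgue-a.e.\ $m$ and every unit vector $v\in\mathbb{R}^4$ one has $\lim_n\bigl|\tfrac1n\log\norm{Tg^n(v)}\bigr|>c\log N>0$. By Oseledec's theorem these limits exist for a.e.\ $m$ and, along the Oseledec splitting of $T_m\mathbb{T}^4$, are precisely the Lyapunov exponents of $g$ at $m$; hence all Lyapunov exponents of $g$ with respect to the Lebesgue measure are nonzero a.e.

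For assertion $(1)$, since $g$ is homotopic to $f_N$ the induced automorphism $g_\ast$ of $H_1(\mathbb{T}^4;\mathbb{Z})$ equals $(f_N)_\ast$. As recorded in the remark on the homological action of $f_N$ (after Proposition~\ref{partialhyp}), $(f_N)_\ast$ is block upper triangular with diagonal blocks the unipotent matrix $(\mathbf{s}_N)_\ast$ and the matrix $A^{2N}$, so $1$ is an eigenvalue of $g_\ast$ and $g_\ast$ is not hyperbolic. By the classification of Anosov diffeomorphisms of tori \cite{AnosovTori}, an Anosov diffeomorphism of $\mathbb{T}^4$ is topologically conjugate to the toral automorphism given by its action on $H_1$, which is then hyperbolic; since $g_\ast$ is not, $g$ is not Anosov. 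This finishes the argument.

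The one genuinely nontrivial ingredient, which I expect to be the real obstacle, is producing the analytic symplectic form in the first place. The product form $dx\wedge dy+dz\wedge dw$ on $\Tor\times\Tor$ is \emph{not} $f_N$-invariant: writing $f_N$ as the shear $S(x,y,z,w)=(x+P_xA^{-N}(z,w),\,y,\,z,\,w)$ followed by the symplectic product $\mathbf{s}_N\times A^{2N}$, the generator of $S$ is only locally Hamiltonian (its contraction with the product form is a non-closed $1$-form proportional to $dy$, $y$ not being a global function on $\Tor$), and this leaves a nontrivial, although exact, error after pull-back. I would therefore look for $\omega$ among $2$-forms cohomologous to an $(f_N)_\ast$-invariant harmonic class: the $(f_N)_\ast$-fixed subspace of $H^2(\mathbb{T}^4;\mathbb{R})$ is two-dimensional and contains a non-degenerate class, so I would fix such a harmonic representative $\Omega_0$ and correct it to a genuinely $f_N$-invariant form $\omega=\Omega_0-\tau$ by solving $(f_N^\ast-\mathrm{Id})\,\tau=f_N^\ast\Omega_0-\Omega_0$ on exact $2$-forms. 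The delicate point is that $\tau$ is not given by a plain geometric series, because the $x$-coordinate of $f_N^{\,k}$ is dragged along by the expanding $A^{2N}$-dynamics so that the relevant pull-backs fail to decay; one has to exploit the reversibility of Lemma~\ref{fmenosuno} to balance the expanding and contracting directions and close the estimate. Should this turn out to be too rigid, the alternative is to replace $f_N$ by a conjugate or by a variant in its family which manifestly preserves a symplectic form and for which the robust non-uniform hyperbolicity argument of the paper goes through verbatim.
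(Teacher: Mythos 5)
Your handling of assertions $(1)$ and $(2)$ is correct and coincides with the paper: $(2)$ is a direct application of Theorem~\ref{main} once one knows every $g\in U$ preserves the Liouville volume, and $(1)$ follows from the unipotent block in the homological action of $f_N$ together with the Franks--Manning classification \cite{AnosovTori}, which is precisely the remark the paper makes after Proposition~\ref{partialhyp}.

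The genuine gap is exactly the one you flagged yourself: you never produce the $f_N$-invariant symplectic form $\omega$, and without it there is no space $Sym^2_\omega(\mathbb{T}^4)$ in which to take the neighborhood. Your diagnosis that the product form $dx\wedge dy + dz\wedge dw$ is not preserved is right, but the cohomological strategy you sketch (harmonic representative $\Omega_0$ plus a corrector $\tau$ solving $(f_N^\ast-\mathrm{Id})\tau = f_N^\ast\Omega_0-\Omega_0$) is not carried out, and you acknowledge the relevant Neumann-type series need not converge because $f_N^\ast$ on exact $2$-forms has no spectral gap away from $1$; the ``alternative'' of replacing $f_N$ by a conjugate is left entirely undeveloped. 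The paper sidesteps all of this with a construction adapted to the partially hyperbolic splitting rather than to the de~Rham decomposition: it takes $\omega_c$ to be the $2$-form whose kernel is $E^s\oplus E^u$ and which restricts on $E^c\cong\mathbb{R}^2\times\{0\}$ to the canonical area form (invariant because $\mathbf{s}_N$ is area preserving and the splitting is $df_N$-invariant), takes $d_u$ and $d_s$ to be the $1$-forms dual to the unit unstable and stable vector fields $(\alpha_m,e^u)$ and its stable analogue with kernels $E^c\oplus E^s$ and $E^c\oplus E^u$ respectively (so that $f_N^\ast d_u=\mu^{2N}d_u$, $f_N^\ast d_s=\lambda^{2N}d_s$, whence $f_N^\ast(d_u\wedge d_s)=d_u\wedge d_s$ by $\lambda\mu=1$), and sets $\omega=\omega_c+d_u\wedge d_s$, which is non-degenerate and $f_N$-invariant by inspection. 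This uses the explicitly computed vector field $\alpha_m$ from Proposition~\ref{partialhyp} and the one-dimensionality of $E^s$ and $E^u$, ingredients that your cohomological route does not exploit. In short: your frame is correct, but the load-bearing step is missing, and the paper's way of supplying it is qualitatively different from the one you propose.
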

\begin{proof}
Let us construct the sympletic form left invariant by $f_N$. Let $w_c$ be the alternate 2-form on $M$ such that restricted to the central bundle $w_c$ is the canonical volume form, and $Ker(w_c)=E^s\oplus E^u$. Let $d_u$ be the $1$-form equal to $1$ at $(\alpha_m, e^u)$ (see Proposition \ref{partialhyp} for the definition of $\alpha_m$), for every $m\in M$, and such that $Ker(d_u)= E^c\oplus E^s$. We notice that ${f_N}_*d_u= \mu^{2N} d_u$. Similarly, we define $d_s$ such that
${f_N}_*d_s= \lambda^{2N} d_s$ and $Ker(d_s)= E^c\oplus E^u$. Then the $2$-form $w= w_c+ d_u\wedge d_s$ is alternate and not degenerate, and so sympletic. It is also left invariant by $f_N$.
\end{proof}

\textbf{Acknowledgements:} We express our gratitude to the referee for his/her outstanding work, which greatly improved the readability and precision of the article. We also thank Omar A. Camarena for useful suggestions in the presentation.
% reading, for pointing out us inaccuracies, and for several useful remarks which led to a better presentation of this work.

\bibliographystyle{alpha}
\bibliography{biblio}

\end{document}